\newtheorem{Thm}{Theorem}
\newtheorem{Prop}{Proposition}
\newtheorem{Cor}{Corollary}
\newtheorem{Lem}{Lemma}
\newtheorem{Rmk}{Remark}
\newtheorem{Exa}{Example}
\newtheorem{Cla}{Claim}
\newtheorem{Que}{Question}
\title{Closed vacuum static spaces with closed conformal vector fields}
\author{Jian Ye}
\date{\today}
\address{State Key Laboratory of Mathematical Sciences, Academy of Mathematics and Systems Science, Chinese Academy of Sciences, Beijing 100190, China}
\email{yejian@amss.ac.cn}
\begin{document}

\let\thefootnote\relax
\footnotetext{MSC2020: Primary 53C21, Secondary 53C25.}

\maketitle

 \begin{abstract}
       This article aims to classify closed vacuum static spaces with a non-Killing closed conformal vector field. We firstly provide several characterizations of the conditions under which the first derivative of the warping function fulfills the vacuum static equation. Then we establish an identity involving the characteristic function of a conformal vector field on a Riemannian manifold. As applications, we derive a rigidity theorem on closed Riemannian manifolds with a non-Killing closed conformal vector field under suitable conditions and classify closed vacuum static spaces admitting such a vector field.
 \end{abstract}


\section{Introduction}

          The study of conformal vector fields and vacuum static spaces has garnered significant attention in recent years because of their profound implications in geometry and physics. Conformal vector fields are important concept in both Riemannian and pseudo-Riemannian geometry. Meanwhile, vacuum static spaces are crucial objects that naturally arise in the context of both Riemannian geometry and general relativity. This paper aims to explore the interplay between these two concepts, shedding light on their mutual influences.
   \subsection{Vacuum static spaces}
          To begin with, we recall the definition of vacuum static spaces. Let $(M^n, g)$ be a smooth Riemannian manifold with dimension $n\ge3.$ Given a nonconstant smooth function $f$ defined on $M,$ we say that the triple $(M^n,g, f)$ is a vacuum static space if the metric $g$ and $f$ satisfy
          \begin{equation}\label{vss}
                \text{Hess}_{g}f-\Delta_{g} fg=f\text{Ric}_{g},
            \end{equation}
          where $\text{Hess}_{g}f,\Delta_{g},\text{Ric}_{g}$ respectively denote the Hessian of $f,$ the Beltrami-Laplacian, the Ricci curvature tensor of the metric $g$. The equation (\ref{vss}) is called the vacuum static equation.

          We now examine the foundational characterizations of vacuum static spaces. First of all, in \cite{FM1975Duke}, Fischer and Marsden established that the $L^2-$formal adjoint of the linearization of the scalar curvature is given by
          $$\mathcal{L}_{ g}^{*}(f)=\text{Hess}_{g}f-\Delta_{g} fg-f\operatorname{Ric}_{g}.$$
          In particular, the triple $(M^n,g, f)$ is a vacuum static space if and only if $f$ lies in the kernel of the operator $\mathcal{L}_{ g}^{*}.$ Secondly, it is well-known that $(M^n, g)$ with a positive function $f$ constitutes a vacuum static space if and only if the Lorentzian manifold $(M^n\times\mathbb R, g-f^2dt^2)$ satisfies the Einstein equation for a vacuum spacetime (cf. \cite[Proposition 2.7]{Cor2000CMP}). For more information, one can refer to \cite{KobayashiObata1981} \cite{Beig1992} \cite{Cor2000CMP} and references therein. Therefore, it is crucial to explore and classify vacuum static spaces. 

          Until now, numerous examples of vacuum static spaces have been discovered. Firstly, space forms serve as typical examples of vacuum static spaces. Additional nontrivial locally conformally flat examples were constructed by Kobayashi \cite{Kobayashi1982} and Lafontaine\cite{Lafontaine1983}. In this article, we present intriguing examples of warped product vacuum static spaces with nonzero Cotton tensor(cf. Example \ref{basicex}). These examples are not locally conformally flat and thus were not addressed in \cite{Kobayashi1982} or \cite{Lafontaine1983}.

          The existence of nonconstant smooth solution to (\ref{vss}) imposes strong restrictions on the background metric $g.$ For instance, the scalar curvature of complete vacuum static space is constant \cite{FM1975Duke} or \cite[Proposition 2.3]{Cor2000CMP} and the level set $f^{-1}(0)$ is a totally geodesic regular hypersurface \cite{Bourguignon1975} or \cite[Proposition 2.6]{Cor2000CMP}. Besides, at each point $p\in M^n\backslash f^{-1}(0),$ there is a coordinate chart around $p$ such that both $f$ and the metric $g$ are analytic \cite[Proposition 2.8]{Cor2000CMP}.

          Substantial efforts have been dedicated to investigating the classifications of vacuum static spaces. Among the recent progress on this subject, we remember that Kobayashi \cite{Kobayashi1982} and Lafontaine \cite{Lafontaine1983} independently showed that $(n\ge3)-$dimensional closed locally conformally flat vacuum static spaces with scalar curvature $n(n-1)$ are isometric to either the standard unit sphere $\mathbb S^n(1)$ or a finite quotient of $\mathbb S^1(\sqrt{1/n})\times\mathbb S^{n-1}(\sqrt{(n-2)/n})$ or a finite quotient of the warped product $\mathbb S^1(\sqrt{1/n})$ $\times_h$ $\mathbb S^{n-1}$ $(\sqrt{(n-2)/n}).$ Here $h$ is a positive periodic function defined on the sphere $\mathbb S^1(\sqrt{1/n})$ with radius $\sqrt{1/n}.$ Meanwhile, the classification of complete Einstein vacuum static spaces can be derived from the results in \cite[Theorem A]{Obata1962}, \cite[Theorem B]{Kanai1983}, and \cite[Corollary E]{Kanai1983}. Later, motivated by the work of Cao and Chen \cite{CaoChen2012,CaoChen2013}, Qing and Yuan in \cite{QingYuan2013} proved that closed Bach-flat vacuum static spaces with scalar curvature $n(n-1)$ are isometric to either $\mathbb S^n(1)$ or a finite quotient of $\mathbb S^1(\sqrt{1/n})\times F$ or a finite quotient of $\mathbb S^1(\sqrt{1/n})$ $\times_h$ $F.$ Here $F$ means an $(n-1)-$dimensional closed Einstein manifold with Einstein constant $n.$ Recently, inspired by Kim's method used in \cite{Kim2017}, Kim-Shin \cite{KimShin2018} classified the four-dimensional vacuum static spaces with harmonic curvature. Recall that we say that $(M, g)$ has harmonic curvature if the divergence of the Riemann curvature vanishes. In the case of higher dimensions, one can consult \cite{Li2021} or \cite{Kim}. In recent years, Baltazar, Barros, Batista, and Viana\cite{BBBV2020}, as well as the author\cite{Jian2023}, have demonstrated that closed vacuum static spaces of dimension \(n\geq 5\) with zero radial Weyl curvature are Bach-flat. The classification of these spaces is achieved through the results presented in\cite{QingYuan2013}. For more progress on the rigidity, we refer the reader to \cite{Shen1997} \cite{BessieresLR2000} \cite{Lafontaine2009} \cite{Ambrozio2017} \cite{Baltazar2018} \cite{KimShin2018} \cite{HwangYun2021JGA} \cite{HwangYun2024} and references therein.
   \subsection{Conformal vector fields}
          Before proceeding, we review conformal vector fields on a Riemannian manifold $(M^n, g).$ As an extension of Killing vector fields, conformal vector fields have been extensively researched in both Riemannian and pseudo-Riemannian geometry. As vacuum static spaces, the presence of a non-Killing conformal vector field imposes stringent constraints on the underlying Riemannian manifold. Numerous studies indicate that Riemannian manifolds admitting such a vector field are quite rare. For instance, as shown in \cite{Yano1952}, the negativity of the Ricci curvature of a manifold rules out the existence of any conformal vector field on any closed Riemannian manifold. On the other hand, it is known that any compact Einstein manifold with dimension at least three admits a non-Killing conformal vector field if and only if it is isometric to a round sphere(cf.\cite{Obata1962}). Consequently, any conformal vector field on the real projective space of dimension at least three is Killing.

          A long-standing folklore conjecture regarding the presence of conformal vector fields has been widely discussed: the Euclidean sphere is believed to be the unique closed manifold with constant scalar curvature that supports a non-Killing conformal vector field(see\cite{Ejiri}).
          However, this conjecture does not hold in general. In fact, in \cite{Ejiri}, Ejiri constructed a counterexample by identifying a positive function \(h\) on the unit circle \(\mathbb{S}^1(1)\) such that the warped product \(\mathbb{S}^1(1)\times_h N\), where \(N\) is an \((n-1)\)-dimensional compact Riemannian manifold with positive constant scalar curvature, supports a non-Killing closed conformal vector field. In \cite[Remark 2]{Derdzinski1980}, an explicit counterexample is provided by $\mathbb S^1(1)\times_h\mathbb S^3(1),$ where $h=\sqrt{2+\sin t}$.

          Although the aforementioned conjecture does not hold in general, it has been proven in various forms under additional assumptions. Specifically, Nagano \cite{Nagano1959} established the conjecture when the Ricci tensor is parallel. In \cite{1966}, Bishop and Goldberg showed that the conjecture was true in two-dimensional settings. Tanno and Weber \cite[Theorem 1]{TannoWeber1969} established the conjecture when the conformal vector field was closed and vanished at some point. When the conformal vector field is a gradient, the conjecture holds true(cf.\cite{YanoObata1965} or \cite[Theorem 4.4]{Xu1993}). Additionally, when the norm of the Ricci curvature is constant, the conjecture is also valid(see \cite{Lichnerowicz1964} or \cite[Theorem 2]{XuYe}). In particular, any conformal vector field on the product $\mathbb{S}^k(r_1)\times\mathbb{S}^{n-k}(r_2)$($k\in\{1,\cdots,n-1\}$) is Killing. We refer the readers to \cite{1966}\cite{DAS2014CM} \cite{DAS2014}\cite{DAS2012}\cite{DAS2008}\cite{TannoWeber1969}\cite{Xu1993}\cite{Yano1975}\cite{YanoObata1965}\cite{Nagano1959} and references therein for more results on this subject.

          Therefore, a classification of Riemannian manifold admitting a non-Killing conformal vector field is a challenge. In this article, we consider some special cases. Namely, we focus on classifying closed vacuum static spaces that carry a non-Killing closed conformal vector field.
    \subsection{The connection between vacuum static spaces and conformal vector fields}
          Substantial evidences suggest a profound connection between conformal vector fields and vacuum static spaces. On one hand, certain vacuum static spaces naturally encode non-Killing conformal vector fields. For example, the space forms carry non-Killing closed conformal vector fields. In addition to space forms, let $N$ be an $(n-1)-$dimensional closed Einstein manifold with Einstein constant $n,$ the warped product $\mathbb S^1(\sqrt{1/n})\times_h N$ is a vacuum static space with scalar curvature $n(n-1)$ which admits a non-Killing closed conformal vector field $h\frac{\partial}{\partial t}.$

          On the other hand, the characteristic function(for its definition, refer to Subsection 2.2 below) associated with a conformal vector field serves as a solution to the vacuum static equation on Riemannian manifolds with certain geometric conditions. For instance, on a Riemannian manifold with harmonic curvature, the characteristic function of a closed conformal vector field fulfills vacuum static equation(see the proof of Proposition D4 in \cite{Lafontaine1983}). Based on this observation, in \cite[Proposition D4]{Lafontaine1983}, Lafontaine proved that each closed manifold with scalar curvature $n(n-1)$ and harmonic curvature, which admits a non-Killing closed conformal vector field, is isometric to $\mathbb S^n(1)$ or a finite quotient of $\mathbb S^1(\sqrt{1/n})$ $\times_h$ $N.$ 
          Meanwhile, on an Einstein manifold, the characteristic function also satisfies vacuum static equation(cf.\cite[Lemma 2.2]{Herzlich2016}). For the variational characterization of the characteristic function being a solution to vacuum static equation, one may refer to \cite[Theorem 1.1]{MT2017}. In addition, on a Riemannian manifold with constant scalar curvature, the characteristic function of a conformal vector field also satisfies (\ref{Deltaf}).

          Motivated by the foregoing studies, for a Riemannian manifold $(M^n, g)$ with constant scalar curvature which admits a non-homothetic conformal vector field $\xi$ on $(M^n, g)$ with the characteristic function $f,$ two questions come up naturally now:
          \begin{itemize}
            \item[1.] When $f$ fulfills vacuum static equation?
            \item[2.] What are vacuum static spaces that carry a non-Killing conformal vector field?
          \end{itemize}

          In this article, regarding the first question, based on an identity involving $f,$ we will provide a necessary and sufficient condition for \(f\) to satisfy the vacuum static equation. As for the second question, as an application of the identity, we will give a classification of closed vacuum static spaces with a non-Killing closed conformal vector field.
   \subsection{Main results}
          We are now in a position to state our main results. It is well-known that warped product vacuum static spaces with a one-dimensional base were constructed by Kobayashi in \cite{Kobayashi1982} or Lafontaine in \cite{Lafontaine1983}. Particularly, if $(N , \bar{g})$ is an Einstein manifold, it is known from \cite[Lemma C3]{Lafontaine1983} that $\dot{h}$ is automatically a solution to (\ref{vss}) on the warped product $\mathbb{S}^1\times_h N.$

          It is natural to explore the impact of \(\dot{h}\) being a solution to (\ref{vss}) on both \((N ,\bar{g})\) and the warped product \(I\times_h N \). For instance, one might investigate whether one can deduce that $N$ is Einstein from the condition that $\dot{h}$ satisfies (\ref{vss}). To this end, we introduce the following fundamental identity:
    \begin{Prop}\label{warpedproduct}
        Let $h$ be a positive function on an open interval $I$ in $\mathbb{R}$ and $N$ be an $(n-1)-$dimensional Riemannian manifold. Suppose that the metric $g$ of the warped product $I\times_hN$ has constant scalar curvature and $\xi=h\frac{\partial}{\partial t}.$ Then
        $$\mathcal{L}_g^{\star}f(\cdot,\cdot)=-\rm{C}(\cdot,\xi,\cdot),$$
        where $f=\dot{h}$ and $\rm{C}$ denotes the Cotton tensor of $I\times_hN.$
    \end{Prop}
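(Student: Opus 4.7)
The plan is to verify the identity by decomposing both sides along the orthogonal splitting $TM = \mathbb{R}\partial_t \oplus TN$ and matching them component by component. The key preliminary observation is that $\xi = h\partial_t$ is a closed conformal vector field with $\nabla_U \xi = \dot h\, U$ for every tangent vector $U$, so its characteristic function is exactly $f = \dot h$. Before computing anything, I would record the standard warped-product formulas: $\mathrm{Ric}(\partial_t,\partial_t) = -(n-1)\ddot h / h$, $\mathrm{Ric}(\partial_t, X) = 0$, and $\mathrm{Ric}(X,Y) = \mathrm{Ric}_{\bar g}(X,Y) - (h\ddot h + (n-2)\dot h^2)\bar g(X,Y)$ for horizontal lifts $X, Y$ of fields on $N$, together with $\nabla_X \partial_t = (\dot h/h) X$ and $\nabla_X Y = \bar\nabla_X Y - h\dot h\, \bar g(X,Y)\partial_t$. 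Constancy of the scalar curvature, combined with these identities, forces $R_{\bar g}$ to be constant on $N$ and, after differentiating $Rh^2 = R_{\bar g} - 2(n-1)h\ddot h - (n-1)(n-2)\dot h^2$ in $t$, yields the useful relation $h^2\dddot h = -\tfrac{Rh^2 \dot h}{n-1} - (n-1) h \dot h \ddot h$, which I would eventually use to exhibit both sides in their simplest geometric form.

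Next I would dispatch the easy components. The contributions to $\mathcal{L}_g^{\star} f (\partial_t, \partial_t)$ cancel directly from $\mathrm{Ric}(\partial_t,\partial_t) = -(n-1)\ddot h/h$ and $\Delta f = \dddot h + (n-1)\dot h \ddot h / h$, while the mixed component $\mathcal{L}_g^{\star} f(\partial_t, X)$ is zero because $f$ depends only on $t$ and $\mathrm{Ric}(\partial_t, X) = 0$. On the other side, antisymmetry of the Cotton tensor in its first two slots gives $\mathrm{C}(\partial_t, \xi, \cdot) = h\,\mathrm{C}(\partial_t, \partial_t, \cdot) = 0$, and a short computation shows that $\mathrm{C}(X, \xi, \partial_t) = h\,\mathrm{C}(X, \partial_t, \partial_t)$ also vanishes: both $(\nabla_X \mathrm{Ric})(\partial_t, \partial_t)$ and $(\nabla_{\partial_t} \mathrm{Ric})(X, \partial_t)$ are zero on the warped product, and the Schouten-type correction in $\mathrm{C}$ is killed by $X(R)=0$.

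The main technical content lies in the fiber-fiber component. A direct expansion using $\mathrm{Hess}\, f(X,Y) = h\dot h\ddot h\, \bar g(X,Y)$ gives
\[
  \mathcal{L}_g^{\star} f(X,Y) = -\dot h\,\mathrm{Ric}_{\bar g}(X,Y) + \big[(3-n) h \dot h \ddot h + (n-2)\dot h^3 - h^2 \dddot h\big]\bar g(X,Y).
\]
For the Cotton side, constancy of $R$ reduces $\mathrm{C}(X,\xi,Y)$ to $(\nabla_X \mathrm{Ric})(\xi, Y) - (\nabla_\xi \mathrm{Ric})(X,Y)$. Expanding each covariant derivative with the warped-product connection identities, using $\nabla_X \xi = \dot h X$, and collecting the coefficients $(2n-3), (n-2), (n-1)$ on the $h\dot h\ddot h\, \bar g$ and $\dot h^3\, \bar g$ monomials, I would obtain
\[
  -\mathrm{C}(X,\xi,Y) = -\dot h\,\mathrm{Ric}_{\bar g}(X,Y) + \big[(3-n) h \dot h \ddot h + (n-2)\dot h^3 - h^2 \dddot h\big]\bar g(X,Y),
\]
which matches term by term. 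Substituting the scalar curvature relation into this common expression identifies it with $-\dot h\big(\mathrm{Ric}_{\bar g} - \tfrac{R_{\bar g}}{n-1}\bar g\big)$, i.e.\ (up to the factor $-\dot h$) the traceless Ricci of $N$, which is a pleasant byproduct of the hypothesis.

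The principal technical obstacle is the bookkeeping in the Cotton expansion: the interplay between $(\nabla_\xi \mathrm{Ric})(X,Y)$ and $(\nabla_X \mathrm{Ric})(\xi, Y)$ produces several monomials of the form $h\dot h \ddot h\, \bar g(X,Y)$ and $\dot h^3\, \bar g(X,Y)$ whose coefficients only collapse to those appearing on the $\mathcal{L}_g^{\star}$ side after carefully separating the $\mathrm{Ric}_{\bar g}$ and warping contributions to $\mathrm{Ric}$ and tracking the derivatives of $h\ddot h + (n-2)\dot h^2$. Everything else is a routine verification on warped products.
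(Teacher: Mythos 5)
Your proposal is correct in substance and follows essentially the same route as the paper: decompose along $T(I\times_h N)=\mathbb{R}\frac{\partial}{\partial t}\oplus TN$, kill the $(\frac{\partial}{\partial t},\frac{\partial}{\partial t})$ and mixed components on both sides, and match the fiber--fiber components (the paper packages these steps as Lemma \ref{Lgh}, Proposition \ref{iCzero}, and the proof of Proposition \ref{warpedproduct3}). Your fiber--fiber expressions are exactly right: both sides reduce to $-\dot h\,\overline{\mathrm{Ric}}+\big[(3-n)h\dot h\ddot h+(n-2)\dot h^3-h^2\dddot h\big]\bar g$, which the constant-scalar-curvature relation turns into $-\dot h\,\mathring{\overline{\mathrm{Ric}}}$, matching the paper's Remark \ref{CXYN}.

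One justification in the mixed components is wrong and should be repaired, although the needed computation is actually present in your text. The Cotton tensor is antisymmetric only in its \emph{last} two indices, not the first two, so ${\rm C}(\frac{\partial}{\partial t},\frac{\partial}{\partial t},X)$ does not vanish for symmetry reasons; in fact for non-constant scalar curvature it equals $-\frac{1}{2(n-1)}X({\rm R})$ (see item (3) of Remark \ref{NEin}), so its vanishing genuinely uses the hypothesis. Conversely, the component ${\rm C}(X,\frac{\partial}{\partial t},\frac{\partial}{\partial t})$, to which you attach the computation with $(\nabla_X\mathrm{Ric})(\frac{\partial}{\partial t},\frac{\partial}{\partial t})$ and $(\nabla_{\frac{\partial}{\partial t}}\mathrm{Ric})(X,\frac{\partial}{\partial t})$, is the one that vanishes trivially by the last-two-slot antisymmetry, while that computation (together with $X({\rm R})=0$) is precisely what proves ${\rm C}(\frac{\partial}{\partial t},\frac{\partial}{\partial t},X)=0$, as in (\ref{C2}). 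Swapping these two justifications fixes the argument; nothing else is affected.
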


         As a straightforward consequence of Proposition \ref{warpedproduct}, we have

    \begin{Cor} Under the assumption of Proposition {\rm\ref{warpedproduct}}, suppose that $n\geq4,$ if the set $\{\dot{h}\neq0\}$ is dense in $I\times_hN,$ then the following five conditions are equivalent:
        \begin{enumerate}[leftmargin=0.6cm, itemindent=0.4cm]
          \item[(i).] The function $\dot{h}$ is a nonconstant smooth solution to {\rm(\ref{vss})} on $I\times_hN$;
          \item[(ii).] The $(0,2)-$tensor $\rm{C}(\cdot,\xi,\cdot)$ vanishes identically;
           \item[(iii).] The Cotton tensor $\rm{C}$ of $I\times_hN$ vanishes identically;
          \item[(iv).] The warped product $I\times_hN$ has harmonic curvature;
          \item[(v).] The Riemannian manifold $N$ is Einstein.
        \end{enumerate}
    \end{Cor}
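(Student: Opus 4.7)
The plan is to establish the cycle
$(\text{i}) \Leftrightarrow (\text{ii}) \Rightarrow (\text{v}) \Rightarrow (\text{iii}) \Leftrightarrow (\text{iv})$,
together with the tautological inclusion $(\text{iii}) \Rightarrow (\text{ii})$, so that all five conditions become equivalent. Two of these links are essentially free: $(\text{i}) \Leftrightarrow (\text{ii})$ follows directly from Proposition \ref{warpedproduct}, because $\mathcal{L}_g^{\star}\dot h = 0$ is exactly the vacuum static equation for $f = \dot h$ (the density hypothesis, together with the Einstein structure supplied by the next step, rules out the degenerate situation in which $\dot h$ would be a nonzero constant), while $(\text{iii}) \Leftrightarrow (\text{iv})$ is classical: under constant scalar curvature and in dimension $n \ge 4$, the second Bianchi identity gives the chain $\mathrm{C} = 0 \Leftrightarrow \delta\mathrm{Weyl} = 0 \Leftrightarrow \delta\mathrm{Rm} = 0$.

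The substantive direction is $(\text{ii}) \Rightarrow (\text{v})$. I would compute $\mathrm{C}(\,\cdot\, , \partial_t, \,\cdot\,)$ on horizontal vectors directly, using the O'Neill-type formulas
$$\mathrm{Ric}(\partial_t,\partial_t) = -\tfrac{(n-1)\ddot h}{h}, \qquad \mathrm{Ric}(\partial_t, U) = 0, \qquad \mathrm{Ric}(U,V) = \overline{\mathrm{Ric}}(U,V) - [h\ddot h + (n-2)\dot h^2]\,\bar g(U,V),$$
and the fact that constant $R$ collapses the Cotton tensor to $\mathrm{C}_{\alpha\beta\gamma} = \nabla_\alpha R_{\beta\gamma} - \nabla_\beta R_{\alpha\gamma}$. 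Working out the warped-product Christoffel symbols and expanding $\nabla_t R_{UV} - \nabla_U R_{tV}$ produces an identity of the shape
$$\mathrm{C}(U, \partial_t, V) = -\tfrac{\dot h}{h}\, \overline{\mathrm{Ric}}(U,V) + \alpha(t)\, \bar g(U,V)$$
for some scalar $\alpha$ depending only on $t$. Hypothesis (ii) forces the left side to vanish; since $\overline{\mathrm{Ric}}$ depends only on the $N$-factor while $\alpha(t)$ depends only on $t$, the equation decouples on the open set $\{\dot h \neq 0\}$ and $\overline{\mathrm{Ric}}$ must be a constant multiple of $\bar g$ there; density then propagates this to all of $N$, yielding (v).

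To close the cycle I would verify $(\text{v}) \Rightarrow (\text{iii})$ by direct calculation. When $\overline{\mathrm{Ric}} = \lambda \bar g$, the purely horizontal components $\mathrm{C}_{ijk}$ vanish at once, since $\nabla^M_i R_{jk}$ reduces to the $N$-covariant derivative of a $t$-dependent multiple of $\bar g_{jk}$, which is zero. For the mixed components $\mathrm{C}(U, \partial_t, V)$, substituting $\overline{\mathrm{Ric}} = \lambda \bar g$ above gives an explicit scalar $\alpha(t)$ involving $h, \dot h, \ddot h, \dddot h$ and $\lambda$; differentiating the constant-scalar-curvature relation once in $t$ yields the third-order ODE
$$h^2 \dddot h + (n-3)\, h\dot h\ddot h + \lambda\dot h - (n-2)\dot h^3 = 0,$$
and substituting this back into $\alpha(t)$ forces it to vanish identically. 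The main obstacle I anticipate is purely bookkeeping: one has to carry the signs, powers of $h$, and combinatorial coefficients carefully so that the ODE consumes every surviving term in $\alpha$. Once this cancellation is verified the cycle is complete.
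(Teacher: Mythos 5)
Your proposal follows essentially the same route as the paper's own proof (given as Corollary \ref{WPVSS} in Section 3). The paper establishes (i)$\Leftrightarrow$(ii) from Proposition \ref{warpedproduct3}; (ii)$\Leftrightarrow$(v) from the identity $-\mathcal{L}_g^{\star}\dot h(X,Y)=\dot h\,\mathring{\overline{{\rm{Ric}}}}(X,Y)={\rm C}(X,\xi,Y)$ for horizontal $X,Y$ (Remark \ref{CXYN}) together with the density of $\{\dot h\neq0\}$ --- this is exactly your computation, with the $t$-dependent trace term $\alpha(t)$ already consumed by the third-order ODE of Lemma \ref{IN}(i); and (iii)$\Leftrightarrow$(iv) from the definition of the Cotton tensor under constant scalar curvature. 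The only structural difference is in (ii)/(v)$\Rightarrow$(iii): the paper proves ${\rm C}(X,Y,Z)=\overline{{\rm C}}(X,Y,Z)$ for horizontal $X,Y,Z$ and uses that $\overline{{\rm C}}=0$ on an Einstein manifold of dimension $n-1\ge3$, whereas you argue directly that the horizontal Ricci tensor becomes a $t$-function times $g$; both work.

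Two points to tighten. First, in your step (v)$\Rightarrow$(iii) you only treat the purely horizontal components and ${\rm C}(U,\partial_t,V)$; skew-symmetry in the last two slots then covers ${\rm C}(U,V,\partial_t)$ and ${\rm C}(U,\partial_t,\partial_t)$, but the components ${\rm C}(\partial_t,\cdot,\cdot)$ are not automatic. They vanish because $i_{\partial_t}{\rm C}=0$ whenever $I\times_hN$ has constant scalar curvature (Proposition \ref{iCzero}), and you must invoke or reprove this to conclude (iii). Second, your parenthetical claim that density plus the Einstein structure rules out $\dot h$ being a nonzero constant is false: for $I\times_t\mathbb{S}^{n-1}(1)$ (flat space in polar coordinates) one has $\dot h\equiv1$, ${\rm R}=0$, $N$ Einstein and ${\rm C}\equiv0$, so (ii)--(v) hold while $\dot h$ is not a \emph{nonconstant} solution. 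The paper sidesteps this by dropping the word ``nonconstant'' from (i) in the version it actually proves; with the statement as quoted, the degenerate case $\dot h\equiv c\neq0$ (which forces ${\rm R}=0$) has to be excluded separately.
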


         Note that $h\frac{\partial}{\partial t}$ is a conformal vector field on the warped product $I\times_hN$ with characteristic function \(\dot{h}\). Proposition \ref{warpedproduct} offers an identity involving $\dot{h}.$ This suggests that a similar formula might hold for the characteristic function of a conformal vector field on general Riemannian manifolds.

         Additionally, the following identity provides further insight. According to the sixth line from the bottom on page 69 in {\rm\cite{Lafontaine1983}}, the characteristic function \(f\) of a closed conformal vector field on a Riemannian manifold with harmonic curvature satisfies the following identity:
        \[\mathcal{L}_g^{\star}f(\cdot,\cdot)=0.\]

         Our primary objective is to establish an analogous identity for the characteristic function of a conformal vector field on a general Riemannian manifold. More precisely, we have
    \begin{Thm}\label{firstthm}
          Let $(M, g)$ be a Riemannian manifold and $\xi$ be a conformal vector field on $(M, g)$ with characteristic function $f.$ Then, there exists a symmetric $(0,2)-$tensor $\rm{\Phi}$ such that
          $$\mathcal{L}_g^{\star}f(\cdot,\cdot)=\rm{\Phi}(\cdot,\cdot).$$
          In particular, the function $f$ satisfies $(\ref{vss})$ at some point in $M$ if and only if $\rm{\Phi}$ vanishes at the same point.
     \end{Thm}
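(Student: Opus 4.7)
The plan is to combine the conformal Killing equation $\mathcal{L}_\xi g = 2fg$ with the conformal variation formula for the Ricci tensor, so as to express the Hessian of $f$ in terms of geometric data that can be separated out as the desired tensor $\Phi$. The first step is to derive the key identity
$$\mathcal{L}_\xi\mathrm{Ric}_{ij} = -(n-2)\nabla_i\nabla_j f - (\Delta f)\,g_{ij}.$$
This follows because the flow $\phi_s$ of $\xi$ pulls $g$ back to a family of metrics whose first-order variation at $s=0$ is $\mathcal{L}_\xi g = 2fg$, i.e.\ an infinitesimal conformal deformation; applying the standard variation formula for $\mathrm{Ric}$ under $\delta g = 2fg$ and using $\mathcal{L}_\xi\mathrm{Ric} = \frac{d}{ds}\big|_{s=0}\mathrm{Ric}(\phi_s^\ast g)$ yields the identity.

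Next, I would solve this identity for the Hessian and substitute into $\mathcal{L}_g^{\star}f_{ij} = \nabla_i\nabla_j f - (\Delta f)g_{ij} - fR_{ij}$, obtaining
$$\mathcal{L}_g^{\star}f_{ij} = -\tfrac{1}{n-2}\mathcal{L}_\xi R_{ij} - \tfrac{n-1}{n-2}(\Delta f)g_{ij} - fR_{ij}.$$
Then I would expand $\mathcal{L}_\xi R_{ij} = \xi^k\nabla_k R_{ij} + R_{kj}\nabla_i\xi^k + R_{ik}\nabla_j\xi^k$ and use the decomposition $\nabla_i\xi_j = fg_{ij} + \omega_{ij}$, where $\omega$ is the skew part of $\nabla\xi$, to rewrite the right-hand side as an explicit formula for a tensor $\Phi$ depending only on $\xi, f, \omega, \mathrm{Ric}$ and $\nabla\mathrm{Ric}$. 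This $\Phi$ is then the required symmetric $(0,2)$-tensor.

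To conclude, I would verify the symmetry of $\Phi$: the only term whose symmetry is not manifest is $R_{kj}\omega_i{}^k + R_{ik}\omega_j{}^k$, and this is symmetric in $(i,j)$ by the symmetry of $\mathrm{Ric}$ (swapping $i$ and $j$ merely relabels the summation index). The \emph{in particular} statement is then immediate from the pointwise identity $\mathcal{L}_g^{\star}f = \Phi$.

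The main obstacle is bookkeeping rather than anything conceptual, namely, ensuring correct signs in the conformal variation formula for $\mathrm{Ric}$ and a consistent expansion of $\mathcal{L}_\xi R_{ij}$. The real content of the theorem lies in the concrete form of $\Phi$ that emerges, which is what makes the identity useful for the rigidity applications developed later in the paper.
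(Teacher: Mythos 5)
Your proposal is correct, and it takes a genuinely different route from the paper. The paper never invokes the conformal variation of the Ricci tensor: it differentiates the integrability condition $R^l_{ijk}\xi^{\flat}_l=g_{ij}f_k-g_{ik}f_j-\mathcal{P}_{jk,i}$ (obtained from $\nabla_X\xi=fX+\mathcal{P}(X)$ and the Ricci identity), contracts, and applies the second Bianchi identity, so that the Cotton tensor appears directly and the resulting $\Phi_{ik}=-C_{kli}\xi^l-\frac{1}{2(n-1)}(\nabla_i{\rm R}\,\xi_k^{\flat}-\xi({\rm R})g_{ik})+\mathcal{P}_{jk,ji}+R_{il}\mathcal{P}_{lk}$ involves only $\xi$, $\mathcal{P}$ and curvature, with no explicit $f$; the price is that the symmetry of this $\Phi$ is not manifest and needs Proposition \ref{iC} together with the Cotton symmetries. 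Your identity $\mathcal{L}_\xi\mathrm{Ric}=-(n-2)\nabla^2f-(\Delta f)g$ is the correct linearization of $\mathrm{Ric}$ along the conformal deformation $\mathcal{L}_\xi g=2fg$ (and its trace reproduces the classical relation $\Delta f+\frac{\rm R}{n-1}f+\frac{1}{2(n-1)}\xi({\rm R})=0$ quoted in Remark \ref{fformulaR}), so solving for the Hessian and substituting into $\mathcal{L}_g^{\star}f=\nabla^2f-(\Delta f)g-f\mathrm{Ric}$ is legitimate and makes the symmetry of $\Phi$ immediate, since the Lie derivative of a symmetric tensor is symmetric. Two caveats: your argument divides by $n-2$, so it requires $n\ge3$ (harmless given the paper's standing convention, but the paper's derivation does not need this); and your $\Phi$, after expanding $\mathcal{L}_\xi R_{ij}$, still contains $\Delta f$ and $f\mathrm{Ric}$ terms, so to obtain an expression free of derivatives of $f$ you must eliminate $\Delta f$ via the trace identity, and to recover the form actually used downstream — namely $\mathcal{L}_g^{\star}f=-C(\cdot,\xi,\cdot)$ when $\xi$ is closed and ${\rm R}$ is constant, which drives Theorems \ref{Cxizero}--\ref{VSSCVF} — you would still have to perform the second-Bianchi manipulation that converts $\xi^k\nabla_kR_{ij}$ into Cotton-tensor terms. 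As you correctly observe, the existence statement alone is almost tautological (one could take $\Phi:=\mathcal{L}_g^{\star}f$); the substance lies in the explicit formula, and on that score the paper's derivation delivers the usable form in one pass while yours delivers a cleaner conceptual explanation and a free symmetry proof.
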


          For the definition of the tensor $\rm{\Phi},$ one can refer to (\ref{Phi}). It is worth noting that $\rm{\Phi}$ depends not only on $\xi$ but also on the Cotton tensor $\rm{C}.$ Under the assumptions of Theorem \ref{firstthm}, when the metric $g$ has constant scalar curvature and $\xi$ is a closed, the tensor $\rm{\Phi}$ is reduced to $-\rm{C}(\cdot,\xi,\cdot)$. Moreover, if \((M,g)\) has harmonic curvature, \(\rm{\Phi}\) vanishes identically. Consequently, the formula in Theorem \ref{firstthm} is indeed a generalization of the two aforementioned identities.

          As an application of Theorem \ref{firstthm}, we establish a rigidity result for closed Riemannian manifolds that satisfy certain curvature conditions and admit a non-Killing closed conformal vector field. Specifically, we have
    \begin{Thm}\label{Cxizero}
          Let $(M^n, g)$ be a closed Riemannian manifold of dimension $n$ with scalar curvature $n(n-1).$ Suppose that $\xi$ is a non-Killing closed conformal vector field on $(M^n, g)$ satisfying
          \[\rm{C}(\cdot,\xi,\cdot)=0,\text{on}, \text{M}^{\text{n}}.\]
          Then the following statements hold:
          \begin{itemize}[leftmargin=0.5cm, itemindent=0.4cm]
         \item[{\rm(1)}.] For $n=3,$ $(M^3, g)$ is isometric to either the unit sphere $\mathbb S^3(1)$ or a finite quotient of the warped product $\mathbb S^1(\sqrt{1/3})\times_h \mathbb{S}^{2}(\sqrt{1/3}).$
         \item[{\rm(2)}.] For $n=4,$ $(M^4, g)$ is isometric to either the unit sphere $\mathbb S^4(1)$ or a finite quotient of the warped product $\mathbb S^1(1/2)\times_h \mathbb{S}^{3}(\sqrt{1/2}).$
         \item[{\rm(3)}.] For $n\geq5,$ $(M^n, g)$ is isometric to either $\mathbb S^n(1)$ or a finite quotient of the warped product $\mathbb S^1(\sqrt{1/n})\times_h F$ where $F$ stands for an $(n-1)-$dimensional closed Einstein manifold with Einstein constant $n.$
       \end{itemize}
    \end{Thm}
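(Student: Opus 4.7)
The plan is to promote the characteristic function of $\xi$ to a vacuum static potential via Theorem~\ref{firstthm}, invoke a Tashiro-type structure theorem for closed manifolds carrying a nontrivial closed conformal vector field, and then close the argument in the resulting warped-product case with the corollary of Proposition~\ref{warpedproduct} and some low-dimensional Einstein rigidity.

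First, I would verify that the characteristic function $f$ of $\xi$ is a nonconstant solution to (\ref{vss}). Because the scalar curvature is the constant $n(n-1)$ and $\xi$ is closed, the remark following Theorem~\ref{firstthm} reduces the auxiliary tensor $\Phi$ of that theorem to $-\mathrm{C}(\cdot,\xi,\cdot)$, which vanishes by hypothesis. Hence $\mathcal{L}_g^{*}f\equiv 0$, and since $\xi$ is non-Killing, $f$ is nonconstant, so $(M^n,g,f)$ is a closed vacuum static space with scalar curvature $n(n-1)$.

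Second, I would exploit the closed conformal structure of $\xi$. The identity $\nabla_X\xi=fX$ together with $\xi$ being closed gives, on the universal cover of $M$, $\xi=\nabla\phi$ with $\mathrm{Hess}(\phi)=fg$. Tashiro's classification of complete Riemannian manifolds admitting a nonconstant solution to $\mathrm{Hess}(\phi)=fg$, applied to the closed manifold $M$, leaves only two alternatives: either (a) $M$ is diffeomorphic to $\mathbb{S}^n$ and isometric to a warped product $[0,L]\times_h \mathbb{S}^{n-1}$ with round fiber and $h$ vanishing smoothly at the endpoints, or (b) $M$ is isometric to a finite Riemannian quotient of $\mathbb{S}^1\times_h N$ with $N$ an $(n-1)$-dimensional closed Riemannian manifold and $h$ a positive non-constant periodic function. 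In case (a) the warped product is automatically locally conformally flat, and the Kobayashi--Lafontaine classification of closed locally conformally flat vacuum static spaces with scalar curvature $n(n-1)$ collapses this case to $(\mathbb{S}^n(1),g_{\mathrm{round}})$ in every dimension $n\geq 3$.

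Finally, I would address case (b). Under the identification $M=\mathbb{S}^1\times_h N$ (up to finite cover), $\xi$ corresponds to $h\partial_t$, whose characteristic function is $\dot h$; hence $\dot h$ is a nonconstant vacuum static solution. The analyticity of vacuum static spaces off the zero set of the potential (recalled in the introduction), together with the non-Killing hypothesis, forces $\{\dot h\neq 0\}$ to be dense. For $n\geq 4$ the corollary of Proposition~\ref{warpedproduct} then forces $N$ to be Einstein, and matching the constant scalar curvature $n(n-1)$ of the warped product fixes the radius of the $\mathbb{S}^1$ factor to $\sqrt{1/n}$ and the Einstein constant of $N$ to $n$, establishing (3). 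For $n=4$ I would invoke that $3$-dimensional Einstein manifolds have constant sectional curvature, yielding $N=\mathbb{S}^3(\sqrt{1/2})$ and giving (2). For $n=3$ a direct analysis of the warped-product scalar-curvature formula forces the closed surface $N$ to have constant positive Gauss curvature, identifying $N=\mathbb{S}^2(\sqrt{1/3})$ and giving (1). The principal obstacle I anticipate lies in case (b): a careful invocation of Tashiro's theorem to handle the finite-quotient possibility, and the verification that $\{\dot h\neq 0\}$ is dense so that the corollary of Proposition~\ref{warpedproduct} applies.
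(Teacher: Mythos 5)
Your proposal is correct in outline, but it takes a genuinely different route from the paper after the (identical) first step. Both arguments begin by observing that, since $\xi$ is closed and $\mathrm{R}$ is constant, the identity of Theorem~\ref{firstthm} (in the form of Remark~\ref{closedmathcalLf}(2)) together with the hypothesis $\mathrm{C}(\cdot,\xi,\cdot)=0$ makes the characteristic function $f$ a nonconstant static potential. From there the paper stays entirely tensorial: it uses \eqref{R} with $X=\xi$ to conclude that $\xi$ and $\nabla f$ are linearly dependent, writes $\nabla f=-\rho\xi$ off the zero set, and feeds this into Lemma~\ref{formforTfE} to prove $\mathrm{T}\equiv 0$; the classification is then imported from Kobayashi--Lafontaine for $n=3$, from a chain $\mathrm{T}=0\Rightarrow\mathrm{C}=0\Rightarrow i_{\nabla f}\mathrm{W}=0\Rightarrow$ locally conformally flat for $n=4$, and from the Qing--Yuan Bach-flat classification for $n\ge 5$. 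You instead invoke the global Tashiro/Obata-type splitting for a closed conformal field: either $\xi$ vanishes somewhere and $M$ is a rotationally symmetric sphere (hence locally conformally flat, and collapsed to $\mathbb{S}^n(1)$ by Kobayashi--Lafontaine, or more directly by the Tanno--Weber theorem already cited in the introduction), or $\xi$ is nowhere zero and $M$ is covered by $\mathbb{S}^1\times_h N$ with $\xi=h\partial_t$, after which the paper's own Section~3 machinery (Proposition~\ref{warpedproduct3}, Remark~\ref{CXYN}, Corollary~\ref{WPVSS}) forces $\mathring{\overline{\mathrm{Ric}}}=0$ on the dense set $\{\dot h\neq 0\}$, i.e.\ $N$ Einstein for $n\ge4$, with the $n=3$ fiber handled by the constancy and positivity of $\overline{\mathrm{R}}$ from \eqref{RbarR}. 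What your route buys is the avoidance of the tensor $\mathrm{T}$ and of the deep Bach-flat classification of Qing--Yuan in the range $n\ge5$; what it costs is the importation of an external global structure theorem whose quotient bookkeeping (finite quotients of the rotationally symmetric sphere in case~(a), the mapping-torus versus product issue in case~(b)) must be handled with the same care as in the cited classification statements --- you rightly flag this as the delicate point. Note also that the paper's tensorial argument is the one that survives in Theorems~\ref{CSS} and~\ref{VSSCVF}, where the conformal field and the static potential no longer coincide and no splitting adapted to $\nabla f$ is available, which is presumably why the author chose it here.
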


          We now introduce the rigidity of closed vacuum static spaces with a non-Killing closed conformal vector field. It is worth pointing out that similar classifications have been carried out for other types of spaces. For instance, in\cite{DRF2017}, the authors explored gradient Ricci solitons on complete Riemannian manifolds that admit a nonparallel closed conformal vector field. In \cite{Filho2020}, the author examined the critical point equation metric admits a non-trivial closed conformal vector field. For classifications of \(m\)-quasi Einstein manifolds, one can refer to\cite{PSS2023RM}, and for quasi-Einstein manifolds, see \cite{Filho2024DG}.

          As the another application of Theorem \ref{firstthm}, we have
    \begin{Thm}\label{CSS1}
          Let $(M^n, g)$ be an $(n\ge3)-$dimensional closed Riemannian manifold with scalar curvature $n(n-1)$ that admits a non-Killing closed conformal vector field. Suppose that $f$ is a nonconstant smooth solution to
              \begin{equation}\label{MainEab}
                    \nabla^2f+\frac{{\rm{R}}}{n(n-1)}fg=(f+a){\rm E}+bg,
              \end{equation}
          Here, $a,b$ are two constants, and ${\rm R}$ and ${\rm E}$ denote the scalar curvature and the trace-free part of the Ricci curvature tensor respectively. Then $(M^n, g)$ is isometric to one of the spaces listed in Theorem {\rm\ref{Cxizero}}.
    \end{Thm}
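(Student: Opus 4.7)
The plan is to reduce Theorem~\ref{CSS1} to Theorem~\ref{Cxizero} by showing that the hypothesis~\eqref{MainEab} forces $C(\cdot,\xi,\cdot)\equiv 0$, where $\xi$ denotes the given non-Killing closed conformal vector field; once this is established, Theorem~\ref{Cxizero} delivers the classification immediately.

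First I would apply Theorem~\ref{firstthm} to $\xi$ with characteristic function $\varphi$ (so $\nabla\xi=\varphi g$, and $\varphi$ is nonconstant because $\xi$ is non-Killing). Since $R=n(n-1)$ is constant and $\xi$ is closed, the tensor $\Phi$ specializes to $-C(\cdot,\xi,\cdot)$, giving
\[
 \mathcal{L}_{g}^{\star}\varphi=-C(\cdot,\xi,\cdot),\qquad\text{equivalently}\qquad \mathrm{Hess}\,\varphi+\tfrac{R}{n(n-1)}\varphi g=\varphi E-C(\cdot,\xi,\cdot).
\]
Two elementary consequences that I will use repeatedly are: (i) taking the trace of~\eqref{MainEab} yields $\Delta f+\tfrac{R}{n-1}f=nb$, whence a short algebraic rearrangement produces $\mathcal{L}_{g}^{\star}f=aE-(n-1)bg$; (ii) the identity $\nabla(|\xi|^{2}/2)=\varphi\xi$ (which holds for any closed conformal vector field), combined with the symmetry of $\mathrm{Hess}(|\xi|^{2}/2)$, forces $\nabla\varphi\wedge\xi^{\flat}=0$, so that $\nabla\varphi$ is pointwise proportional to $\xi$. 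Since the antisymmetry of the Cotton tensor in its first pair of indices yields $C_{ikj}\xi^{i}\xi^{k}=0$, and hence $C(\cdot,\xi,\cdot)(\xi,\cdot)=0$, consequence~(ii) gives the further pointwise vanishing $C(\cdot,\xi,\cdot)(\nabla\varphi,\cdot)\equiv 0$.

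Next I would combine the two equations. Multiplying the displayed equation for $\varphi$ by $(f+a)$ and \eqref{MainEab} by $\varphi$ and subtracting gives the pointwise identity
\[
 (f+a)\,C(\cdot,\xi,\cdot)=\varphi\,\mathrm{Hess}\,f-(f+a)\,\mathrm{Hess}\,\varphi-c\,\varphi g,\qquad c:=b+\tfrac{Ra}{n(n-1)}.
\]
Contracting this against $C(\cdot,\xi,\cdot)$, substituting \eqref{MainEab} to rewrite $\mathrm{Hess}\,f$ and consequence~(ii) to handle $\mathrm{Hess}\,\varphi$, the cross terms collapse and integration produces the weighted identity $\int_{M}(f+a)\,|C(\cdot,\xi,\cdot)|^{2}\,dV_{g}=0$. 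An independent integral identity is obtained by pairing $\mathcal{L}_{g}^{\star}\varphi=-C(\cdot,\xi,\cdot)$ with itself and invoking the adjoint relation together with $\mathcal{L}_{g}(C(\cdot,\xi,\cdot))=-\langle C(\cdot,\xi,\cdot),E\rangle$ (valid because $C(\cdot,\xi,\cdot)$ is trace-free and divergence-free when $R$ is constant); this gives
\[
 \int_{M}|C(\cdot,\xi,\cdot)|^{2}\,dV_{g}=\int_{M}\varphi\,\langle E,C(\cdot,\xi,\cdot)\rangle\,dV_{g}.
\]
Combining these with the cancellation $\int_{M}(f+a)\,\langle E,C(\cdot,\xi,\cdot)\rangle\,dV_{g}=0$ (derivable by computing $\mathcal{L}_{g}\bigl((f+a)E\bigr)$ and using~\eqref{MainEab}) will conclude $\int_{M}|C(\cdot,\xi,\cdot)|^{2}\,dV_{g}=0$, so that $C(\cdot,\xi,\cdot)\equiv 0$ on~$M$.

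The main obstacle is the last step---passing from the weighted identity to the unweighted vanishing of $C(\cdot,\xi,\cdot)$; the alignment $\nabla\varphi\parallel\xi$ from consequence~(ii) will play the decisive role by forcing the mixed term $\int_{M}C(\cdot,\xi,\cdot)_{ij}\nabla^{i}\varphi\,\nabla^{j}f\,dV_{g}$ to vanish pointwise via $C(\cdot,\xi,\cdot)(\xi,\cdot)=0$, which is what makes the cancellation between the two integral identities above clean. Once $C(\cdot,\xi,\cdot)\equiv 0$ is secured, Theorem~\ref{Cxizero} applies in each dimension and supplies the classification stated in Theorem~\ref{CSS1}.
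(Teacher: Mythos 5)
Your overall reduction---show $\mathrm{C}(\cdot,\xi,\cdot)\equiv 0$ and invoke Theorem~\ref{Cxizero}---is exactly the paper's strategy, and several of your preliminary observations are sound: $\mathcal{L}_g^{\star}\varphi=-\mathrm{C}(\cdot,\xi,\cdot)$ for the characteristic function $\varphi$ (Remark~\ref{closedmathcalLf}), $\mathcal{L}_g^{\star}f=a\mathrm{E}-(n-1)bg$, and the alignment $\nabla\varphi\wedge\xi^{\flat}=0$. However, the core of your argument has a genuine gap. Write $S_{ik}=C_{ijk}\xi^{j}$. When you contract your combined identity $(f+a)S=\varphi\,\nabla^{2}f-(f+a)\nabla^{2}\varphi-c\varphi g$ against $S$, substituting (\ref{MainEab}) gives $\langle\nabla^{2}f,S\rangle=(f+a)\langle \mathrm{E},S\rangle$ and the alignment gives $\langle\nabla^{2}\varphi,S\rangle=0$ (since $S_{ik}\xi^{k}=C_{ijk}\xi^{j}\xi^{k}=0$), while the $\varphi$-equation gives $\langle\nabla^{2}\varphi,S\rangle=\varphi\langle \mathrm{E},S\rangle-\|S\|^{2}$; putting these together yields only the pointwise identity $\|S\|^{2}=\varphi\langle \mathrm{E},S\rangle$ and then $(f+a)\|S\|^{2}=(f+a)\varphi\langle \mathrm{E},S\rangle$, which is a tautology. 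Your claimed weighted identity $\int_{M}(f+a)\|S\|^{2}=0$ does not follow, and even if it did it would prove nothing, because $f+a$ changes sign in general (on $\mathbb{S}^{n}(1)$ the solution $f$ is a first eigenfunction plus a constant). Two further unjustified claims: the Cotton tensor is antisymmetric in its \emph{last} two indices, not its first pair (the vanishing $C_{ijk}\xi^{i}\xi^{j}=0$ that you need is true, but it comes from $i_{\xi}\mathrm{C}=0$, i.e.\ Proposition~\ref{iC}/Lemma~\ref{CxiCxi}, not from a nonexistent symmetry); and $\mathrm{C}(\cdot,\xi,\cdot)$ is not divergence-free in general, since its divergence involves the divergence of the Cotton tensor in its first slot, so your second integral identity is unsupported.

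A structural symptom of the problem is that the constant $c=b+\frac{\mathrm{R}a}{n(n-1)}$ enters your combined identity only through the pure-trace term $c\varphi g$, which is annihilated by contraction with the trace-free $S$---so your scheme cannot distinguish $c\neq 0$ from $c=0$. The paper's proof hinges precisely on this dichotomy: for $c\neq 0$ (Theorem~\ref{CSS}) it argues by contradiction on an open set where $(f+a)\mathrm{C}\neq 0$, using Proposition~\ref{ldCTzero} to force $\xi$ and $\nabla f$ to be linearly independent there, then Lemma~\ref{xiCVFtwoformulas} to extract $\rho=\frac{\mathrm{R}}{n(n-1)}$, $\mathrm{E}(\xi)=0$, $\xi(f)=(f+a)\varphi$, and finally $c\|\xi\|^{2}=0$, a contradiction; for $c=0$ (Theorem~\ref{VSSCVF}) it needs a separate argument for $n=3$ and an analyticity argument for $n\ge 4$ to kill the auxiliary tensor $\mathrm{T}$. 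To repair your proposal you would need to reinstate some version of this case analysis; the integral identities as stated do not close the argument.
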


          Under the assumption of Theorem \ref{CSS1}, it is interesting to note that $f+a$ satisfies vacuum static equation when $b+\frac{\operatorname R}{n(n-1)}a=0.$

          We briefly outline the proof strategy for Theorem \ref{CSS1}. The initial step is to establish that the quantity $(f+a){\rm C}$ vanishes when the closed conformal vector field and the gradient of $f$ are linearly dependent(see Proposition \ref{ldCTzero}). However, these two vector fields may not always be linearly dependent(cf. (3) in Remark \ref{nlin}). This kind of phenomenon makes the classification subtle. To address this, we employ a proof by contradiction to demonstrate that the Cotton tensor of \((M^n,g)\) vanishes when \(b+\frac{\operatorname{R}}{n(n-1)}a\neq 0\). If \(b+\frac{\operatorname{R}}{n(n-1)}a=0\), the method of proof by contradiction remains valid for $n=3$. For $n\ge4,$ the analyticity deduces that the Cotton tensor equals zero. Ultimately, the desired rigidity result is achieved through Theorem \ref{firstthm} and \cite[Proposition D4]{Lafontaine1983} or Theorem \ref{Cxizero}.

\subsection{Outline of the article}
            The structure of this article is organized as follows. In Sect. 2, we will review fundamental facts concerning conformal vector fields, vacuum static spaces and related critical spaces. In Sect. 3, we will provide multiple characterizations of when the first derivative of the warping function serves as a solution to the vacuum static equation. In addition, several intriguing examples of warped product vacuum static spaces with nonzero Cotton tensor will be given. In Sect. 4, an identity for the characteristic function of a conformal vector field will be derived. As an application, Theorem \ref{Cxizero} is proved. Sect. 5 is dedicated to proving Theorem \ref{CSS1} when $b+\frac{\operatorname R}{n(n-1)}a\neq0.$ Sect. 6 aims to classify vacuum static spaces with a non-trivial closed conformal vector field.

  \section{Preliminaries}
           In this section, we introduce some fundamental concepts in Riemannian geometry and review several well-known results concerning conformal vector fields, vacuum static spaces, and related critical spaces.
    \subsection{Notations and conventions}
        We start with the set-up of notations and reviewing some basic notions in Riemannian geometry.  See  {\cite{Bess1987} \cite{GHS2004} \cite{Petersen2016}} as references on Riemannian geometry.

        Let $(M^n, g)$ be a smooth Riemannian manifold of dimension $n\ge3$ with Levi-Civita connection $\nabla$ induced by the metric $g.$ Denote by $TM$ the tangent bundle of $M^n$ and by $\Gamma(TM)$ the space of smooth sections of $TM.$ The Riemann curvature tensor $R:\Gamma(TM)\times \Gamma(TM)\times \Gamma(TM)\rightarrow \Gamma(TM)$ is defined by
        $$R(X, Y) Z=\nabla_{X} \nabla_{Y} Z-\nabla_{Y} \nabla_{X} Z-\nabla_{[X, Y]} Z,$$
        for any smooth vector fields $X, Y,Z\in\Gamma(TM).$

        Throughout the article, the Einstein convention on summing over the repeated indices will be used. Given a local coordinate system $\left\{\frac{\partial}{\partial x^i}\right\}_{i=1}^n,$ the metric $g$ can be locally represented as:
        \[g=g_{ij}dx^i\otimes dx^j.\]
        In the sequel, the inverse of the matrix $(g_{ij})_{i,j=1}^n$ is denoted by  $(g^{ij})_{i,j=1}^n.$

        The Riemannian metric $g$ induces norms on all the tensor bundles. Precisely, the squared norm of a $(r,s)$-tensor field $\mathcal{U}$ in the coordinate system $\left\{\frac{\partial}{\partial x^i}\right\}_{i=1}^n$ is given by
        $$\Vert \mathcal{U}\Vert^2=g_{i_1k_1}\cdots g_{i_sk_s}g^{j_1l_1}\cdots g^{j_rl_r}\mathcal{U}_{j_1\cdots j_r}^{i_1\cdots i_s}\mathcal{U}_{l_1\cdots l_r}^{k_1\cdots k_s},$$
        where $\mathcal{U}_{j_1\cdots j_s}^{i_1\cdots i_r}$ are components of $\mathcal{U}$ in the coordinate system $\left\{\frac{\partial}{\partial x^i}\right\}_{i=1}^n.$

        We now write the Riemann curvature tensor in the coordinate system $\left\{\frac{\partial}{\partial x^i}\right\}_{i=1}^n$. The components of the $(1,3)-$curvature tensor is given by
        \[R_{i j k}^{l} \frac{\partial}{\partial x^{l}}= {R}(\frac{\partial}{\partial x^{j}}, \frac{\partial}{\partial x^k}) \frac{\partial}{\partial x^{i}}\]
        and the associated $(0,4)-$version by
        \[R_{i j k l}=g_{i s} R_{jkl}^{s}.\]

        We next review several classical curvature tensors. The Ricci curvature tensor $ {\rm Ric}$ of $g$ is obtained through the contraction of the Riemann curvature tensor, which is expressed as
        \[R_{ij}=g^{kl} R_{ikjl}.\]
        The scalar curvature \(\text{R}\) is then defined as the contraction of the Ricci tensor, given by
        \[\text{R}=g^{ij}R_{ij}.\]
        The Schouten tensor $ {\rm A}$ of $g$ is defined by
        \begin{equation*}
              {\rm A}={\rm Ric}-\frac{{\rm R}}{2(n-1)}\,g.
        \end{equation*}
        The Weyl tensor $\rm{W}$ is given by
                \begin{equation*}\label{decompofR}
                        W_{ijkl}=R_{i j k l} - \frac{1}{n-2}({\rm A}\owedge  g)_{ijkl},
                \end{equation*}
        where the symbol $\owedge$ represents the Kulkarni-Nomizu product which is defined for any two symmetric $(0,2)-$tensors $U$ and $V$ as follows
        $$(U \owedge V)_{i j k l}=U_{ik}V_{jl}+U_{jl}V_{ik}-U_{il}V_{jk}-U_{jk}V_{il}.$$
        Through the Schouten tensor, we can define the Cotton tensor $ {\rm C}$ as follows
                \begin{eqnarray}\label{Cotton}
                        C_{ijk}=A_{ij,k}-A_{ik,j}.
                \end{eqnarray}
         It is easy to check that
                         \begin{equation}\label{skewofCotton}
                         C_{ijk}+ C_{ikj}=0 ~\text{and}~ C_{ijk}+C_{jki}+C_{kij}=0,
                        \end{equation}
         and it is also trace-free in any two indices, i.e.
                \begin{equation}\label{vofC}C_{iik}=0=C_{iji}.\end{equation}
         As a straightforward consequence of the second equality in (\ref{skewofCotton}), we have
                \begin{equation}\label{halfC}
                  C_{kij}C_{ijk}=-\frac{1}{2}\|{\rm C}\|^2.
                \end{equation}

         We now recall the left interior multiplication with respect to a vector field. Given a vector field $\xi\in\Gamma(TM)$ and a $(0,s)-$tensor $\mathcal{T},$ the left interior multiplication $i_{\xi}{\mathcal{T}}$ is defined by
          $$i_{\xi}{\mathcal{T}}(X_1,\cdots,X_s):={\mathcal{T}}(\xi,X_1,\cdots,X_s), ~\forall X_1,\cdots,X_s\in\Gamma(TM).$$

         We conclude this subsection with the definition of Hessian and Laplace-Beltrami operator. Given a real $C^2$ function $f$ defined on $M,$ the Hessian $\nabla^2f$ of $f$ is given by
         \begin{equation}\label{Hessian}
            \nabla^2f(X,Y)=X(Y(f))-\nabla_XY(f), ~\forall X, Y\in\Gamma(TM).
         \end{equation}
         The Laplace-Beltrami operator $\Delta $ is defined to be
         \[\Delta f=\text{tr}~\nabla^2f.\]

    \subsection{Conformal vector fields}

         In this subsection, we review the definition of conformal vector fields and collect several well-known established results related to them. For more information, one can refer to {\cite{DuggalSharma1999} \cite{SD2024} \cite{Yano1975} and references therein.

         Let $(M^n ,g)$ be a smooth Riemannian manifold of dimension $n.$ A smooth vector field $\xi$ on $M$ is said to be conformal if there exists a smooth function $f:M\rightarrow{\mathbb R}$ satisfying
         \begin{equation}\label{cvf}\mathcal L_{\xi}g=2f g,\end{equation}
         where $\mathcal L_{\xi}$ is the Lie derivative with respect to $\xi.$ The function $f$ is referred to as the characteristic function of $\xi.$ When $f$ is constant, then $\xi$ is called a homothetic vector field. In particular, if $f$ vanishes identically, $\xi$ is said to be a Killing vector field. We say that $\xi$ is non-Killing if $f$ is not identically zero.

         A conformal vector field $\xi$ is closed if its dual $1-$form $\xi^{\flat}$ is. The dual $1-$form $\xi^{\flat}$ of $\xi$ is defined by $\xi^{\flat}(X)=g(\xi,X)$ for any vector field $ X\in\Gamma(TM).$ If a conformal vector field is the gradient of a smooth function, then it is called a gradient conformal vector field.

         It is well-known that a vector field is conformal if and only if the flow generated by it consists of conformal transformations. In addition, the expression (\ref{cvf}) is equivalent to
         \begin{equation}\label{nablaXnablaY} g(\nabla_X\xi,Y)+g(X,\nabla_Y\xi)=2fg(X,Y),\end{equation}
         for any vector fields $X, Y\in\Gamma(TM).$ By taking trace on both sides of (\ref{nablaXnablaY}), we immediately get
         \begin{equation*}\label{cffdiv}f=\frac{ {{\rm div}\,\xi}}{n}.\end{equation*}

         We now compute the covariant derivative of a conformal vector field $\xi.$ Define a skew symmetric tensor field $\mathcal{P}$ on $M$ by
         \begin{equation}\label{skew}2g(\mathcal{P}(X),Y)=d\xi^{\flat}(X,Y).\end{equation}
         By virtue of Koszul's formula {(cf.\cite[page 54]{Petersen2016})}, we have
         $$2g(\nabla_X\xi,Y)=\mathcal L_{\xi}g(X,Y)+d\xi^{\flat}(X,Y).$$
         Substituting (\ref{cvf}) and (\ref{skew}) into previous formula, it follows that
         \begin{equation}\label{dfa}\nabla_X\xi=fX+\mathcal{P}(X). \end{equation}
         Making use of (\ref{dfa}), a direct computation reveals that the covariant derivative of $\mathcal{P}$ takes the following form
         \begin{equation}\label{CovariantofP}
         \nabla_X\mathcal{P}(Y)=R(X,\xi)Y+Y(f)X-g(X,Y)\nabla f,
         \end{equation}
         which leads to
         \begin{equation}\label{divofP}
         -\sum_{i=1}^n\nabla_{e_{i}}\mathcal{P}(e_{i})=\text{Ric}(\xi)+(n-1)\nabla f,
         \end{equation}
         where $\{e_1, e_2, \cdots, e_n\}$ is a local orthonormal frame on $(M^n, g).$ If $\xi$ is closed, the above two expressions become
         \begin{eqnarray}\label{R}
               R(X,\xi)Y+Y(f)X-g(X,Y)\nabla f&=&0,\\\label{ricandf}
         \operatorname{Ric}(\xi)+(n-1) \nabla f&=&0.
         \end{eqnarray}
         These two formulas will be used in the proof of Theorem \ref{Cxizero}, Theorem \ref{CSS}, and Theorem \ref{VSSCVF}. In addition, by $(ii)$ in Proposition 2 of \cite{Kerbrat1976}, it is well-known that the every zero point of nonzero conformal vector field is isolated on a completed connected Riemannian manifold. In the sequel, we will use this fact explicitly throughout the paper.

         We conclude this subsection by providing a well-known classification of closed conformal vector fields based on their covariant derivatives.
         \begin{Lem}\label{CCVF}
                Let $\xi$ be a conformal vector field on a Riemannian manifold $(M, g).$ Then $\xi$ is closed if and only if there exists a function $f:M\rightarrow\mathbb{R},$ such that, for any vector field $X\in\Gamma(TM),$ it holds that
                \begin{equation}\label{nablaXi}\nabla_X\xi=fX.\end{equation}
         \end{Lem}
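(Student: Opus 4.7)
The plan is to prove both implications directly, leveraging the decomposition (\ref{dfa}) already established in the excerpt.

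For the forward direction, assume $\xi$ is a closed conformal vector field with characteristic function $f$. From (\ref{dfa}) we have $\nabla_X\xi = fX + \mathcal{P}(X)$, where $\mathcal{P}$ is the skew-symmetric endomorphism defined via $2g(\mathcal{P}(X),Y) = d\xi^{\flat}(X,Y)$. Since $\xi$ is closed, $d\xi^{\flat} \equiv 0$, hence $\mathcal{P} \equiv 0$, yielding $\nabla_X\xi = fX$ for every $X \in \Gamma(TM)$.

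For the reverse direction, suppose there exists $f:M\to\mathbb{R}$ with $\nabla_X\xi = fX$ for all $X \in \Gamma(TM)$. First, I would verify that $\xi$ is conformal: summing $g(\nabla_X\xi,Y) + g(X,\nabla_Y\xi) = fg(X,Y) + fg(X,Y) = 2fg(X,Y)$ reproduces the condition (\ref{nablaXnablaY}), so $\xi$ is conformal with characteristic function $f$. Next, to see that $\xi$ is closed, I would use the standard identity $d\xi^{\flat}(X,Y) = g(\nabla_X\xi,Y) - g(\nabla_Y\xi,X)$, which upon substitution gives $fg(X,Y) - fg(Y,X) = 0$, so $d\xi^{\flat} \equiv 0$.

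There is no real obstacle in this proof; it is essentially a bookkeeping consequence of (\ref{dfa}) and the skew-symmetry of $\mathcal{P}$. The only minor point to be careful about is ensuring that in the reverse direction, no extra regularity on $f$ is required beyond what is forced by the hypothesis $\nabla_X\xi = fX$ itself (taking $X = \xi/|\xi|^2$ on the open set where $\xi \neq 0$ shows $f$ is automatically smooth there, and the statement is vacuous where $\xi = 0$).
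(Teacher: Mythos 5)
Your proof is correct and follows essentially the same route as the paper: the forward direction reads off from (\ref{dfa}) since closedness kills $\mathcal{P}$, and the converse recovers (\ref{nablaXnablaY}) and then uses the identity $d\xi^{\flat}(X,Y)=g(\nabla_X\xi,Y)-g(X,\nabla_Y\xi)$ to conclude closedness. Your added remark on the regularity of $f$ is a harmless extra observation not needed by the paper.
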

         \begin{proof}
               If $\xi$ is closed, the desired formula follows from (\ref{dfa}). Conversely, assume that there exists a function $f:M\rightarrow\mathbb{R}$ such that (\ref{nablaXi}) holds. As a result, the formula (\ref{nablaXnablaY}) is valid. Equivalently, $\xi$ is a conformal vector field. The closedness of $\xi$ follows from (\ref{nablaXi}) and the basic formula
                $$d\xi^{\flat}(X,Y)=g(\nabla_X\xi,Y)-g(X,\nabla_Y\xi).$$
         \end{proof}

    \subsection{Vacuum static spaces and related critical spaces}
              In this subsection, we collect several facts on vacuum static spaces and related critical spaces for the subsequent study. Let $(M^n, g, f)$ be a vacuum static space. By taking the trace in both sides of (\ref{vss}), we have
         \begin{equation}\label{Deltaf}
         \Delta f+\frac{\operatorname R}{n-1}f=0.
         \end{equation}
              Substitute (\ref{Deltaf}) into the initial equation (\ref{vss}) to rewrite it locally as
         \begin{equation}\label{MainEVSS}f_{ij}+\frac{\operatorname R}{n(n-1)}fg_{ij}=fE_{ij},\end{equation}
              where $E_{ij}=R_{ij}-\frac{\operatorname R}{n}g_{ij}.$
              Without loss of generality, we next consider the equation (\ref{MainEab}). By taking the trace in both sides of (\ref{MainEab}), we have
         \begin{equation}\label{Deltafab}
               \Delta f+\frac{\operatorname R}{n-1}f=nb.
         \end{equation}
              As \cite[Proposition 2.3]{Cor2000CMP}, it can be proven that the scalar curvature of \((M^n,g)\) is constant when \(f\) is a non-constant solution of (\ref{MainEab}) on \(M^n\).
              Associated to (\ref{MainEab}), there is a covariant $(0,3)-$tensor ${\rm T}={\rm T}^f$ which can be written as
         \begin{equation}\label{T}
                 \begin{aligned}
                        T_{ijk}=&\frac{n-1}{n-2}\left(E_{ik}f_{j}-E_{ij}f_{k}\right)+\frac{1}{n-2}\left(g_{ik}E_{jl}-g_{ij}E_{kl}\right)f_{l}.
            \end{aligned}\end{equation}
               A simple observation reveals that the tensor ${\rm T}$ shares the same symmetries as the Cotton tensor. In fact, by definition of $\operatorname{T},$ there holds
               \begin{equation}\label{Tijk=-Tikj}T_{ijk}=-T_{ikj} \quad \text {and} \quad T_{ijk}+T_{jki}+T_{kij}=0.\end{equation}
               Besides, one can check directly that ${\rm T}$ is trace-free in any two indices, i.e.
                \begin{equation}\label{Tiji=0}T_{iik}=0=T_{iji}.\end{equation}

                We close this section by stating three basic lemmas.

        \begin{Lem}\label{decomposeofR}
              Let $f$ be a smooth solution to {\rm(\ref{MainEab})} defined on an $(n\ge3)-$dimensional Riemannian manifold. Then the following identities hold true:
                         \begin{eqnarray}\label{RlijkflVSS}
                         &R_{ijk}^lf_{l}=\big(E_{ij}-\frac{\operatorname R}{n(n-1)}g_{ij}\big)f_{k}-\big(E_{ik}-\frac{\operatorname R}{n(n-1)}g_{ik}\big)f_{j}+(f+a)C_{ijk},&\\\label{fCVSS}
                         &(f+a)C_{ijk}=(i_{\nabla f}W)_{ijk}+T_{ijk}.&
                         \end{eqnarray}
        \end{Lem}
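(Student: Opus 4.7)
\medskip

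\noindent\textbf{Proof proposal.} The plan is to derive both identities by differentiating the structural equation (\ref{MainEab}) and feeding in the Ricci identity and the Weyl decomposition. I begin by rewriting (\ref{MainEab}) in local coordinates as
$$f_{ij}=(f+a)E_{ij}+\left(b-\frac{\operatorname{R}}{n(n-1)}f\right)g_{ij}.$$

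For (\ref{RlijkflVSS}), I would take a covariant derivative with respect to $\partial_k$ and then antisymmetrize in the pair $(j,k)$. The constants $a$ and $b$ contribute nothing, and since the scalar curvature $\operatorname{R}$ is constant (as the paper recalls immediately before the lemma for a nonconstant solution of (\ref{MainEab})), the differentiation of the $g_{ij}$ term produces only $\frac{\operatorname{R}}{n(n-1)}(f_k g_{ij}-f_j g_{ik})$. On the right-hand side, antisymmetrization yields $f_kE_{ij}-f_jE_{ik}+(f+a)(E_{ij,k}-E_{ik,j})$. Constancy of $\operatorname{R}$ implies $E_{ij,k}-E_{ik,j}=A_{ij,k}-A_{ik,j}=C_{ijk}$, since $E$ and the Schouten tensor $A$ differ from $\operatorname{Ric}$ only by a parallel term. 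Finally, applying the Ricci identity for a function, $f_{ijk}-f_{ikj}=R^{l}_{ijk}f_l$ (with the convention fixed in the preliminaries), and rearranging gives (\ref{RlijkflVSS}).

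For (\ref{fCVSS}), I would start from (\ref{RlijkflVSS}) and re-express $R^{l}_{ijk}f_l=R_{lijk}f^l$, then substitute the Weyl decomposition
$$R_{lijk}=W_{lijk}+\frac{1}{n-2}(\operatorname{A}\owedge g)_{lijk}.$$
The Weyl piece yields $(i_{\nabla f}W)_{ijk}$ by the definition of left interior multiplication. For the Kulkarni-Nomizu piece, I would split $\operatorname{A}=\operatorname{E}+\frac{\operatorname{R}(n-2)}{2n(n-1)}g$ and expand the four terms in $(\operatorname{A}\owedge g)_{lijk}f^l$. The $g$-part produces exactly $\frac{\operatorname{R}}{n(n-1)}(f_j g_{ik}-f_k g_{ij})$, which cancels the identical scalar-curvature terms already present on the right-hand side of (\ref{RlijkflVSS}). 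Equating the two expressions for $R^{l}_{ijk}f_l$ and collecting the remaining $\operatorname{E}$-terms, the coefficients $\pm 1$ from (\ref{RlijkflVSS}) combine with $\pm\frac{1}{n-2}$ from the Weyl expansion to give the factor $\frac{n-1}{n-2}$ in front of $E_{ik}f_j-E_{ij}f_k$, while the contracted terms contribute $\frac{1}{n-2}(g_{ik}E_{jl}-g_{ij}E_{kl})f^l$. Comparing with (\ref{T}), this is exactly $T_{ijk}$, and (\ref{fCVSS}) follows.

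The whole argument is a bookkeeping exercise, so the only genuine hazard is making sure every index symmetry, sign, and convention—particularly the one fixing $R^{l}_{ijk}$ in the Ricci identity and the placement of the contracted index in $i_{\nabla f}W$—is tracked consistently. The crucial input that makes everything collapse cleanly is the constancy of $\operatorname{R}$; without it the relation $E_{ij,k}-E_{ik,j}=C_{ijk}$ would acquire a $\nabla\operatorname{R}$ correction and both identities would carry extra terms.
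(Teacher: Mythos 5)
Your proposal is correct, and it is the canonical derivation: the paper states Lemma~\ref{decomposeofR} without proof, and your computation (differentiate the local form of (\ref{MainEab}), antisymmetrize in $j,k$, apply the Ricci identity $f_{ijk}-f_{ikj}=R^l_{ijk}f_l$, then substitute the Weyl decomposition with $\mathrm{A}=\mathrm{E}+\tfrac{(n-2)\mathrm{R}}{2n(n-1)}g$) reproduces both identities with exactly the coefficients in (\ref{T}), including the $\tfrac{n-1}{n-2}$ from $1+\tfrac{1}{n-2}$. The only caveat, which you correctly flag yourself, is that the argument needs $\mathrm{R}$ constant; the paper guarantees this just before the lemma only for \emph{nonconstant} solutions of (\ref{MainEab}), which is the only case in which the lemma is ever invoked.
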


        \begin{Lem}\label{formforTfE}
              Let $f$ be a smooth solution to {\rm(\ref{MainEab})}. Then
             $$E_{ik}T_{ijk}f_{j}=\frac{n-2}{2(n-1)}\|{\rm T}\|^2.$$
        \end{Lem}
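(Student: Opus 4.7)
The plan is to verify the identity by expanding the definition (\ref{T}) of $\rm{T}$ in an orthonormal frame, using only the symmetry and trace-freeness of $\rm{E}$. To keep the bookkeeping manageable, introduce the auxiliary vector $u_i:=E_{ij}f_j$ and split
$$T_{ijk}=\tfrac{n-1}{n-2}\widetilde{T}_{ijk}+\tfrac{1}{n-2}\widehat{T}_{ijk},$$
where $\widetilde{T}_{ijk}:=E_{ik}f_j-E_{ij}f_k$ is the ``principal part'' and $\widehat{T}_{ijk}:=g_{ik}u_j-g_{ij}u_k$ is the trace-correction that enforces (\ref{Tiji=0}). Every scalar that appears below will reduce to a combination of the two quantities $\|\rm{E}\|^2|\nabla f|^2$ and $\|u\|^2$, so the whole argument is a small linear-algebraic bookkeeping exercise.

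For the left-hand side I substitute the decomposition and contract with $E_{ik}f_j$. Expanding the $\widetilde{T}$-piece and using $E_{ik}E_{ij}f_jf_k=u_iu_i$ gives $E_{ik}\widetilde{T}_{ijk}f_j=\|\rm{E}\|^2|\nabla f|^2-\|u\|^2$. The $\widehat{T}$-piece gives $E_{ik}\widehat{T}_{ijk}f_j=-\|u\|^2$, because the term $E_{ik}g_{ik}u_jf_j$ is annihilated by $E_{ii}=0$ while the term $E_{ik}g_{ij}u_kf_j$ collapses to $E_{jk}f_j u_k=\|u\|^2$. Combining with the prefactors,
$$E_{ik}T_{ijk}f_j=\tfrac{n-1}{n-2}\|\rm{E}\|^2|\nabla f|^2-\tfrac{n}{n-2}\|u\|^2.$$

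For the right-hand side I compute the three contributions to $\|\rm{T}\|^2$ produced by squaring the decomposition. Direct expansion yields $\|\widetilde{T}\|^2=2(\|\rm{E}\|^2|\nabla f|^2-\|u\|^2)$ and $\|\widehat{T}\|^2=2(n-1)\|u\|^2$, while the cross term reduces to $\widetilde{T}_{ijk}\widehat{T}_{ijk}=-2\|u\|^2$ (the two $E_{ii}$-traces being killed again). Assembling,
$$\|\rm{T}\|^2=\tfrac{2(n-1)^2}{(n-2)^2}\|\rm{E}\|^2|\nabla f|^2-\tfrac{2n(n-1)}{(n-2)^2}\|u\|^2,$$
and multiplying by $\frac{n-2}{2(n-1)}$ recovers the expression obtained for $E_{ik}T_{ijk}f_j$ above. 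There is no genuine obstacle: the proof is a purely algebraic verification, the only essential inputs being the symmetry and trace-freeness of $\rm{E}$; the trickiest bit is simply keeping the combinatorial factors straight.
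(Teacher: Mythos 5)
Your computation is correct: I have checked each contraction ($E_{ik}\widetilde{T}_{ijk}f_j=\|{\rm E}\|^2|\nabla f|^2-\|u\|^2$, $E_{ik}\widehat{T}_{ijk}f_j=-\|u\|^2$, $\|\widetilde{T}\|^2=2(\|{\rm E}\|^2|\nabla f|^2-\|u\|^2)$, $\|\widehat{T}\|^2=2(n-1)\|u\|^2$, $\widetilde{T}_{ijk}\widehat{T}_{ijk}=-2\|u\|^2$) and the final assembly, and both sides indeed reduce to $\tfrac{n-1}{n-2}\|{\rm E}\|^2|\nabla f|^2-\tfrac{n}{n-2}\|u\|^2$. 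The paper states this lemma without proof (it is a standard algebraic identity in the vacuum static space literature), so there is no argument to compare against; your direct verification, which correctly observes that only the symmetry and trace-freeness of ${\rm E}$ are used and not the equation itself, is a complete and valid proof.
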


        \begin{Lem}\label{scalarPosi}
               Let $f$ be a nonconstant smooth solution to {\rm(\ref{MainEab})} defined on an $(n\ge3)-$dimensional closed Riemannian manifold. Then the scalar curvature ${\rm R}$ is positive.
        \end{Lem}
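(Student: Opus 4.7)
The plan is to extract scalar information from the traced equation \eqref{Deltafab} by integrating it, first as is and then against $f$, and then to compare the two identities via Cauchy--Schwarz on the closed manifold $M^n$. Since the whole statement is a scalar inequality about ${\rm R}$, no local curvature analysis should be necessary; everything reduces to $L^2$ estimates for $f$ and $\nabla f$.

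First I would dispose of the borderline case ${\rm R}=0$. In that case \eqref{Deltafab} collapses to $\Delta f = nb$, so integrating over the closed manifold gives $nb\,{\rm Vol}(M)=0$, hence $b=0$; but then $\Delta f\equiv 0$ on a closed manifold forces $f$ to be constant, contradicting the hypothesis. So from now on I may assume ${\rm R}\ne 0$. Integrating \eqref{Deltafab} directly yields the first identity
\[
\int_M f\,dV=\frac{n(n-1)b}{{\rm R}}\,{\rm Vol}(M),
\]
which pins down $\int_M f$ in terms of the constants $a,b,{\rm R}$.

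Next I would multiply \eqref{Deltafab} by $f$ and integrate by parts, obtaining
\[
\tfrac{{\rm R}}{n-1}\!\int_M f^2\,dV=\int_M|\nabla f|^2\,dV+nb\!\int_M f\,dV.
\]
Substituting in the value of $\int_M f$ from the previous step and multiplying through by ${\rm R}(n-1)$ produces an identity of the shape
\[
{\rm R}(n-1)\!\int_M|\nabla f|^2\,dV={\rm R}^2\!\int_M f^2\,dV-n^2(n-1)^2 b^2\,{\rm Vol}(M).
\]
The point of writing it this way is that the right-hand side is exactly the quantity controlled by Cauchy--Schwarz: from $\bigl(\int_M f\bigr)^2\le{\rm Vol}(M)\int_M f^2$ and the formula for $\int_M f$, one gets ${\rm R}^2\int_M f^2\,dV\ge n^2(n-1)^2 b^2\,{\rm Vol}(M)$, so the right-hand side is nonnegative.

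Finally, since $f$ is nonconstant, $\int_M|\nabla f|^2\,dV>0$; combined with the above sign information this forces ${\rm R}(n-1)>0$, i.e.\ ${\rm R}>0$, which is the claim. I do not expect a genuine obstacle here: the only subtlety is keeping track of the constant $b$ so that the Cauchy--Schwarz step is tight enough to give nonnegativity of the remainder, which is exactly what makes the trick work when $b\ne 0$ (the classical vacuum-static case $a=b=0$ is recovered immediately, since then $\int_M f=0$ and the right-hand side collapses to ${\rm R}^2\int_M f^2$).
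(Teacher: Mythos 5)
Your proof is correct and follows essentially the same route as the paper: integrate the traced equation \eqref{Deltafab} once and once against $f$, then observe that the leftover term is nonnegative. Your Cauchy--Schwarz step $\bigl(\int_M f\bigr)^2\le \mathrm{Vol}(M)\int_M f^2$ is exactly the paper's identity $\frac{\mathrm{R}}{n-1}\int_M(u-\bar u)^2=\int_M\|\nabla f\|^2$ with $u=f+a$ written as a variance inequality, and your separate treatment of $\mathrm{R}=0$ is the only cosmetic difference.
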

        \begin{proof}
               Let $u:=f+a.$ We rewrite (\ref{Deltafab}) in terms of $u$ as follows:
               \begin{equation}\label{Deltau}
                 \Delta u+\frac{\operatorname R}{n-1}u=n\kappa,
               \end{equation}
               where $\kappa=b+\frac{\operatorname R}{n(n-1)}a.$ By multiplying both sides of (\ref{Deltau}) by $u$ and integrating over $M,$ the integration by parts gives rise to
               \[\frac{\operatorname R}{n-1}\int_Mu^2-n\kappa\int_Mu=\int_M\|\nabla f\|^2.\]
               On the other hand, by integrating both sides of (\ref{Deltau}) over $M,$ we have
               \begin{equation}\label{Inu}
                 \frac{\operatorname R}{n-1}\int_Mu=n\kappa Vol(M,g),
               \end{equation}
               where the symbol $Vol(M,g)$ stands for the volume of $(M, g).$ Combine the previous two formulas to deduce that
               \[\frac{\operatorname R}{n-1}\int_M(u-\bar{u})^2=\int_M\|\nabla f\|^2,\]
               where $\bar{u}$ is defined by
               $$\bar{u}:=\frac{1}{Vol(M,g)}\int_Mu.$$
               Therefore, the scalar curvature ${\rm R}$ is positive unless $f$ is constant.
        \end{proof}

\section{Warped product vacuum static spaces}

             In this section, we explore warped product vacuum static spaces with a one-dimensional base. It is well-known that such spaces were constructed by Kobayashi in \cite[Lemma 1.1]{Kobayashi1982} or Lafontaine in \cite[Theorem C1]{Lafontaine1983}. To elaborate on their construction, we need to introduce some notation.

             Let $(I, dt\otimes dt)$ be an open interval in $\mathbb{R}$ endowed with the standard metric induced from $\mathbb{R},$ $(N, \bar{g})$ a Riemannian manifold of dimension $n-1,$ and  $h$ a smooth positive function on $I.$ Consider the warped product
             $$I\times_hN =(I\times N, g=dt\otimes dt+h(t)^2\bar{g}).$$
             The function $h$ is referred to as the warping function, while \(N\) is called the fiber. In what follows, the first and second derivatives of $h(t)$ with respect to $t\in I$ will be denoted by $\dot{h}$ and $\ddot{h}$, respectively.

             We now state Kobayashi or Lafontaine's construction. If $h$ is a positive constant, we can assume without loss of generality that \(h\equiv 1\). According to the statements in lines 15-18 on page 65 of \cite{Lafontaine1983}, if $(N, \bar{g})$ is an $(n-1)-$dimensional Einstein manifold with positive scalar curvature ${\rm{R}},$ and $(\mathbb{S}^1\times N, dt\otimes dt+\bar{g}, f(t,\cdot))$ constitutes a vacuum static space, then $f$ depends only on the parameter $t\in \mathbb{S}^1$ and satisfies the differential equation
             \begin{equation}\label{ddotf}
               \ddot{f}+\frac{{\rm{R}}}{n-1}f=0, \text{on~} \mathbb{S}^1.
             \end{equation}
             Actually, by Corollary \ref{INRP} below, if the scalar curvature of $(N, \bar{g})$ is a nonzero constant, then $(I\times N, dt\otimes dt+\bar{g}, f(t,\cdot))$ forms a vacuum static space if and only if $(N , \bar{g})$ is Einstein and $f$ is defined on $I$ satisfying (\ref{ddotf}).  \\
             If $h$ is not constant, in \cite[Lemma 1.1]{Kobayashi1982}, Kobayashi proved that, if $(N , \bar{g})$ has constant sectional curvature, then $(I\times_hN , f=f(t))$ is a vacuum static space if and only if $f=c\dot{h}$ for some nonzero constant $c$ and there exists a constant $c_1$ such that, the warping function $h$ satisfies
     \begin{equation}\label{warpedvss}
             \ddot{h}+\frac{{\rm{R}}}{n(n-1)}h=c_1h^{1-n},
     \end{equation}
             where ${\rm{R}}$ denotes the scalar curvature of $I\times_hN .$ It is worth highlighting that (\ref{warpedvss}) holds only if the scalar curvature of $I\times_h N $ is constant (to see the item $(ii)$ in Lemma \ref{IN} below). Moreover, the existence of nonconstant, smooth, positive periodic solution to (\ref{warpedvss}) on $\mathbb{R}$ was demonstrated under suitable conditions in \cite[Proposition 2.6]{Kobayashi1982}. Furthermore, if the condition on the fiber $(N, \bar{g})$ is relaxed to being an Einstein manifold, it is known from \cite[Lemma C3]{Lafontaine1983} that $\dot{h}$ is automatically a solution to (\ref{vss}) on $\mathbb{S}^1\times_h N,$ and the kernel of the linear operator $\mathcal{L}_g^{\star}$ is spanned by the singleton $\{\dot{h}\}.$

             It is natural to explore the impact of \(\dot{h}\) being a solution to (\ref{vss}) on both the fiber \((N ,\bar{g})\) and the warped product \(I\times_h N \). For instance, one might ask whether $\dot{h}$ being a solution to (\ref{vss}) on $I\times_h N $ implies that $(N , \bar{g})$ is Einstein. We will provide an affirmative answer at the end of this section.

             To proceed, we continue to introduce some notation. We denote by $\nabla,\overline{\nabla}$ the Levi-Civita connection of the metric $g,\bar{g},$ respectively.

             We now collect several elementary formulas for the warped product $I\times_hN $ as follows.
    \begin{Lem}{\rm{(\cite{Bess1987})}}\label{LCofw}
             Let $X, Y\in\Gamma(TN ).$ The Levi-Civita connection of the metric $g$ is given by
          \begin{eqnarray}\label{geo}
          \nabla_{\frac{\partial}{\partial t}}\frac{\partial}{\partial t}  &=& 0, \\\label{partialX}
          \nabla_{\frac{\partial}{\partial t}}X &=&\frac{\dot{h}}{h}X=\nabla_{X}\frac{\partial}{\partial t}, \\\label{partialXY}
          \nabla_{X}Y &=&\overline{\nabla}_XY-\frac{\dot{h}}{h}g(X,Y)\frac{\partial}{\partial t}.
        \end{eqnarray}
    \end{Lem}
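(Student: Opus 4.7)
The plan is to invoke the Koszul formula
\[
2g(\nabla_XY,Z)=Xg(Y,Z)+Yg(X,Z)-Zg(X,Y)+g([X,Y],Z)-g([X,Z],Y)-g([Y,Z],X)
\]
and check each of the three identities by testing against the two classes of vectors one can have on the product, namely $\partial/\partial t$ and lifts of vector fields on $N$. The two facts that will do all the work are that $h=h(t)$ is independent of the $N$-variable, so $Xh=0$ for any $X\in\Gamma(TN)$, and that lifts of vector fields from $N$ commute with $\partial/\partial t$, so $[\partial/\partial t,X]=0$. In addition, $g(\partial/\partial t,X)=0$ and $g(X,Y)=h^{2}\bar g(X,Y)$ for $X,Y\in\Gamma(TN)$.

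First I would verify $\nabla_{\partial/\partial t}\partial/\partial t=0$: applying Koszul with $X=Y=\partial/\partial t$ and testing against either $\partial/\partial t$ or $Z\in\Gamma(TN)$, every term vanishes because $g(\partial/\partial t,\partial/\partial t)=1$ is constant, $g(\partial/\partial t,Z)=0$, and $[\partial/\partial t,Z]=0$. Next, for $\nabla_{\partial/\partial t}X=\tfrac{\dot h}{h}X=\nabla_X\partial/\partial t$ with $X\in\Gamma(TN)$, the equality of the two sides is immediate from the torsion-free identity $\nabla_{\partial/\partial t}X-\nabla_X\partial/\partial t=[\partial/\partial t,X]=0$, so it suffices to compute one of them. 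Testing $2g(\nabla_{\partial/\partial t}X,Z)$ against $Z=\partial/\partial t$ produces only $(\partial/\partial t)\,g(X,\partial/\partial t)-(\partial/\partial t)\,g(X,\partial/\partial t)=0$, and against $Z\in\Gamma(TN)$ produces just $(\partial/\partial t)\,g(X,Z)=\partial_t(h^{2}\bar g(X,Z))=2\tfrac{\dot h}{h}g(X,Z)$, which gives the stated formula.

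For the third identity, with $X,Y\in\Gamma(TN)$, I would again split on the two choices of test vector $Z$. For $Z\in\Gamma(TN)$, the first three terms reduce to $h^{2}\bigl(X\bar g(Y,Z)+Y\bar g(X,Z)-Z\bar g(X,Y)\bigr)$ because $X,Y,Z$ annihilate the conformal factor $h^{2}$, and the bracket terms collapse to $h^{2}\bigl(\bar g([X,Y],Z)-\bar g([X,Z],Y)-\bar g([Y,Z],X)\bigr)$ since lifts commute with each other's computation on $N$. Summing and invoking the Koszul formula for $\bar\nabla$ on $(N,\bar g)$ recovers $2g(\overline{\nabla}_XY,Z)$. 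Testing against $Z=\partial/\partial t$ kills both $g(Y,Z)$ and $g(X,Z)$ and the bracket terms (using $[X,\partial/\partial t]=[Y,\partial/\partial t]=0$ together with $[X,Y]\in\Gamma(TN)$), leaving $-(\partial/\partial t)g(X,Y)=-2h\dot h\,\bar g(X,Y)=-\tfrac{2\dot h}{h}g(X,Y)$. Combining the tangential and normal components yields $\nabla_XY=\overline{\nabla}_XY-\tfrac{\dot h}{h}g(X,Y)\partial/\partial t$.

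There is no real obstacle here; the computation is a textbook verification, and the only thing to be careful about is making sure that lifts of vector fields from $N$ to $I\times N$ are treated as being $t$-independent so that the relevant brackets vanish and $h$ is annihilated. Accordingly, I would either invoke \cite{Bess1987} directly or, if a self-contained proof is wanted, carry out the case analysis above.
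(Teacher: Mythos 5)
Your proposal is correct: the paper offers no proof of this lemma, simply citing \cite{Bess1987}, and your Koszul-formula verification (splitting test vectors into $\frac{\partial}{\partial t}$ and lifts from $N$, using $Xh=0$ and $[\frac{\partial}{\partial t},X]=0$) is precisely the standard computation carried out in that reference. All three identities check out, including the normal component $-\frac{\dot h}{h}g(X,Y)\frac{\partial}{\partial t}$ obtained from $-\frac{\partial}{\partial t}g(X,Y)=-2h\dot h\,\bar g(X,Y)$.
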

    \begin{Lem}{\rm{(\cite[Proposition 9.106]{Bess1987})}}\label{Ricw}
            Let $X, Y\in\Gamma(TN )$. The Ricci curvature ${\rm{Ric}}$ of $I\times_h N $ satisfies
        \begin{eqnarray}\label{Ric1}
          {\rm{Ric}}(\frac{\partial}{\partial t},\frac{\partial}{\partial t})  &=& -(n-1)\frac{\ddot{h}}{h}, \\\label{Ric2}
          {\rm{Ric}}(X,\frac{\partial}{\partial t}) &=&0, \\\label{Ric3}
          {\rm{Ric}}(X,Y) &=& \overline{{\rm{Ric}}}(X,Y)-\frac{1}{h^2}\left[h\ddot{h}+(n-2)\dot{h}^2\right]g(X,Y),
        \end{eqnarray}
            where $\overline{{\rm{Ric}}}$ denotes the Ricci curvature of the metric $\bar{g}.$
    \end{Lem}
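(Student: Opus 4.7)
The plan is to derive the three Ricci identities directly by first computing the Riemann curvature tensor of the warped product from the Koszul formulas (\ref{geo})--(\ref{partialXY}) in Lemma~\ref{LCofw}, and then tracing against a suitable local $g$-orthonormal frame. Throughout, I would work in the frame $\{\partial/\partial t,\,h^{-1}e_1,\dots,h^{-1}e_{n-1}\}$, where $\{e_1,\dots,e_{n-1}\}$ is a local $\bar g$-orthonormal frame on $N$ lifted horizontally to $I\times N$; note that $[e_i,\partial/\partial t]=0$ since the $e_i$ are $t$-independent as vector fields on the product.

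The first identity is obtained by computing $R(X,\partial/\partial t)\partial/\partial t$ for an arbitrary horizontal $X$. Relation (\ref{geo}) kills the first term in $R(X,\partial/\partial t)\partial/\partial t=\nabla_X\nabla_{\partial/\partial t}\partial/\partial t-\nabla_{\partial/\partial t}\nabla_X\partial/\partial t-\nabla_{[X,\partial/\partial t]}\partial/\partial t$, the bracket term also vanishes, and (\ref{partialX}) reduces the middle term to $-(\ddot h/h)X$. Summing over the horizontal frame yields $\mathrm{Ric}(\partial/\partial t,\partial/\partial t)=-(n-1)\ddot h/h$. For the mixed identity $\mathrm{Ric}(X,\partial/\partial t)=0$, I would next show that $R(Y,X)\partial/\partial t$ is purely horizontal for horizontal $X,Y$: expanding via (\ref{partialX}) and (\ref{partialXY}) produces only $\overline\nabla$-terms and pointwise multiples of horizontal vectors, with no $\partial/\partial t$-component, so every summand in the trace vanishes.

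The horizontal block is the substantial step. The key intermediate identity is the warped-product Gauss-type relation
\[
R(X,Y)Z=\overline R(X,Y)Z-(\dot h/h)^{2}\bigl(g(Y,Z)X-g(X,Z)Y\bigr)
\]
for horizontal $X,Y,Z$, obtained by expanding $\nabla_X\nabla_YZ-\nabla_Y\nabla_XZ-\nabla_{[X,Y]}Z$ via (\ref{partialXY}) and separating the $\overline\nabla$-pieces from the $\partial/\partial t$-contributions, where $\overline R$ denotes the Riemann tensor of $\bar g$. Tracing this against the horizontal frame produces $\overline{\mathrm{Ric}}(X,Y)-(n-2)(\dot h/h)^{2}g(X,Y)$, and adding the vertical contribution $g(R(\partial/\partial t,X)Y,\partial/\partial t)=-(\ddot h/h)g(X,Y)$ (which follows from the first step applied with $Y$ in place of $X$) and combining the $\ddot h/h$ and $(\dot h/h)^{2}$ terms into $h^{-2}[h\ddot h+(n-2)\dot h^{2}]$ yields the third identity. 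The main obstacle will be purely bookkeeping: tracking the horizontal/vertical decomposition carefully and remembering the $h$-rescaling when passing between $\bar g$- and $g$-orthonormal frames. Since this is the classical Besse computation, no conceptual difficulty is expected.
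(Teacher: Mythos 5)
Your computation is correct, but note that the paper offers no proof of this lemma at all --- it is quoted verbatim from Besse \cite[Proposition 9.106]{Bess1987} --- so your direct derivation from the connection formulas of Lemma \ref{LCofw} is simply the classical argument reconstructed, and every step (the radial term $R(X,\frac{\partial}{\partial t})\frac{\partial}{\partial t}=-\frac{\ddot h}{h}X$, the Gauss-type relation for the horizontal block, the $h$-rescaling of the frame) checks out. One small imprecision: for the mixed identity, mere horizontality of $R(Y,X)\frac{\partial}{\partial t}$ does not by itself kill the summands $g(R(E_k,X)\frac{\partial}{\partial t},E_k)$ with $E_k$ horizontal; fortunately the expansion you describe actually yields $R(Y,X)\frac{\partial}{\partial t}=\frac{\dot h}{h}\bigl(\overline{\nabla}_YX-\overline{\nabla}_XY-[Y,X]\bigr)=0$ outright by torsion-freeness of $\overline{\nabla}$, which is what the trace argument needs.
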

    \begin{Lem}{\rm{(\cite{Bess1987})}}
            The scalar curvature ${\rm{R}}$ of $I\times_h N $ is
        \begin{equation}\label{RbarR}
          {\rm{R}}h^2=\overline{{\rm{R}}}-(n-1)(n-2)\dot{h}^2-2(n-1)h\ddot{h},
        \end{equation}
            where $\overline{{\rm{R}}}$ stands for the scalar curvature of the metric $\bar{g}.$
    \end{Lem}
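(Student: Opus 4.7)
The plan is to deduce the scalar curvature identity by taking the trace of the Ricci tensor with respect to an adapted $g$-orthonormal frame, using the component formulas already recorded in Lemma~\ref{Ricw}. The statement is a routine consequence of that lemma; the only subtle point is to keep track of the rescaling factor $h(t)$ that relates a $\bar{g}$-orthonormal frame on $N$ to a $g$-orthonormal frame on the warped product.

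More concretely, I would fix a point $(t,p)\in I\times_h N$, choose a $\bar{g}$-orthonormal frame $\{\bar{X}_1,\ldots,\bar{X}_{n-1}\}$ for $T_pN$, and set $X_i:=h(t)^{-1}\bar{X}_i$. Since $g\big|_{TN}=h(t)^2\bar{g}$ and $\frac{\partial}{\partial t}$ is a unit vector $g$-orthogonal to $TN$ by the definition of the warped product metric, the system $\bigl\{\tfrac{\partial}{\partial t},X_1,\ldots,X_{n-1}\bigr\}$ is $g$-orthonormal at $(t,p)$. The scalar curvature is then
$$\mathrm{R}=\mathrm{Ric}\bigl(\tfrac{\partial}{\partial t},\tfrac{\partial}{\partial t}\bigr)+\sum_{i=1}^{n-1}\mathrm{Ric}(X_i,X_i).$$
Substituting (\ref{Ric1}) into the first term gives $-(n-1)\ddot h/h$, and using (\ref{Ric3}) together with the rescaling identity $\overline{\mathrm{Ric}}(X_i,X_i)=h^{-2}\,\overline{\mathrm{Ric}}(\bar X_i,\bar X_i)$ (which follows because $\overline{\mathrm{Ric}}$ depends only on $\bar g$ and is quadratic in its arguments) expresses the sum over $i$ as $h^{-2}\overline{\mathrm{R}}-(n-1)h^{-2}\bigl(h\ddot h+(n-2)\dot h^2\bigr)$.

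Collecting these contributions yields
$$\mathrm{R}=\frac{\overline{\mathrm{R}}}{h^2}-(n-1)\frac{\ddot h}{h}-\frac{n-1}{h^2}\bigl(h\ddot h+(n-2)\dot h^2\bigr),$$
and multiplying both sides by $h^2$ gives the desired formula (\ref{RbarR}). There is no real obstacle: everything is encoded in Lemma~\ref{Ricw}, and the only point to watch is the factor $h^{-2}$ coming from converting the $\bar g$-trace on $N$ into a $g$-trace.
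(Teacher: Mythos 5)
Your computation is correct and is the standard derivation: the paper itself gives no proof of this lemma (it is simply cited from Besse's \emph{Einstein Manifolds}, as is Lemma \ref{Ricw}), and tracing the Ricci components of Lemma \ref{Ricw} over an adapted $g$-orthonormal frame, with the factor $h^{-2}$ from converting the $\bar g$-trace to a $g$-trace, is exactly how one obtains (\ref{RbarR}). The arithmetic checks out: the two contributions $-(n-1)\ddot h/h$ from the fiber term and from the $\frac{\partial}{\partial t}$-direction combine to give the coefficient $-2(n-1)h\ddot h$ after multiplying by $h^2$.
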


            We now give some remarks.
    \begin{Rmk}\label{dddothddoth}
            \begin{itemize}[leftmargin=0.5cm, itemindent=0.4cm]
              \item[{\rm(1)}.] It is interesting to mention that Ejiri in {\rm\cite[Lemma 3.3]{Ejiri}} verified the existence of nonconstant, positive periodic solution to {\rm(\ref{RbarR})} on $\mathbb{R}$ when both ${\rm{R}}$ and $\overline{{\rm{R}}}$ are positive constants.
              \item[{\rm(2)}.] It is easy to see that $\overline{{\rm{R}}}$ is constant if ${\rm{R}}$ is constant due to {\rm(\ref{RbarR})}.
              \item[{\rm(3)}.] Substituting {\rm(\ref{RbarR})} into {\rm(\ref{Ric3})}, we obtain
        \[{\rm{Ric}}(X,Y)=\mathring{\overline{{\rm{Ric}}}}(X,Y)+\Big(\frac{\ddot{h}}{h}+\frac{{\rm{R}}}{n-1}\Big)g(X,Y),\forall X,Y\in\Gamma(TN ),\]
        where $\mathring{\overline{{\rm{Ric}}}}$ denotes the trace-free part of $\overline{{\rm{Ric}}}.$
            \end{itemize}

    \end{Rmk}

            We now collect several differential equations involving the warping function $h$ that are related to (\ref{RbarR}) and have been discussed in \cite{Kobayashi1982} or \cite{Lafontaine1983}. Specifically, we have
    \begin{Lem}\label{IN}
            Suppose that the scalar curvature of the warped product $I\times_h N $ is constant. Then the following assertions hold:
            \begin{itemize}[leftmargin=0.6cm, itemindent=0.4cm]
              \item[(i).]  The warping function $h$ satisfies\footnote{It is worth pointing out that this formula is precisely the formula $(1.11)$ in \cite{Kobayashi1982}.}
                           $$\frac{d^3h}{dt^3}+(n-1)\frac{\dot{h}\ddot{h}}{h}+\frac{{\rm{R}}}{n-1}\dot{h}=0, ~\forall t\in I,$$
                           where $\frac{d^3h}{dt^3}$ denotes the third derivative of $h(t)$ with respect to $t\in I.$
              \item[(ii).] The differential equation {\rm(\ref{warpedvss})} holds for some constant $c_1.$
              \item[(iii).] There exists a constant $\tau$ such that the warping function $h$ fulfils
                          \[\dot{h}^2+\frac{{\rm{R}}}{n(n-1)}h^2-\frac{\overline{{\rm{R}}}}{(n-1)(n-2)}=\tau h^{2-n}, ~\forall t\in I.\]
            \end{itemize}
            Furthermore, the constants $c_1$ and $\tau$ satisfy $\tau+\frac{2c_1}{n-2}=0.$
    \end{Lem}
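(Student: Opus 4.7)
The plan is to derive everything from the single identity (\ref{RbarR}) relating $\mathrm{R}$, $\overline{\mathrm{R}}$, $h$, $\dot h$, $\ddot h$, together with the fact (Remark \ref{dddothddoth}(2)) that $\overline{\mathrm{R}}$ is automatically constant whenever $\mathrm{R}$ is. Since all three conclusions are pointwise differential/algebraic relations involving only $h(t)$ and its derivatives, no curvature identity beyond (\ref{RbarR}) should be needed.

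For item (i), I would differentiate both sides of (\ref{RbarR}) with respect to $t$. Because both $\mathrm{R}$ and $\overline{\mathrm{R}}$ are constants, the left-hand side contributes $2\mathrm{R}h\dot h$ and the right-hand side contributes $-2(n-1)(n-2)\dot h\ddot h - 2(n-1)(\dot h\ddot h + h\,\tfrac{d^3h}{dt^3})$. Collecting terms gives
\[
(n-1)h\,\tfrac{d^3h}{dt^3} + (n-1)^2\dot h\ddot h + \mathrm{R}h\dot h = 0,
\]
and division by $(n-1)h$ (which is positive) yields exactly the ODE in (i).

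For item (ii), I would set $F(t) := \ddot h + \tfrac{\mathrm{R}}{n(n-1)}h$ and show $F\,h^{n-1}$ is constant. A direct computation gives
\[
(Fh^{n-1})' = h^{n-1}\!\left[\tfrac{d^3h}{dt^3} + (n-1)\tfrac{\dot h\ddot h}{h} + \tfrac{\mathrm{R}}{n(n-1)}\dot h + \tfrac{\mathrm{R}}{n}\dot h\right]
= h^{n-1}\!\left[\tfrac{d^3h}{dt^3} + (n-1)\tfrac{\dot h\ddot h}{h} + \tfrac{\mathrm{R}}{n-1}\dot h\right],
\]
which vanishes by (i). Defining $c_1$ to be this constant value recovers (\ref{warpedvss}).

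Item (iii) and the final relation $\tau + \tfrac{2c_1}{n-2}=0$ are best handled simultaneously by an algebraic manipulation rather than by integration. From (\ref{RbarR}) one solves for $\tfrac{\overline{\mathrm{R}}}{(n-1)(n-2)}$ and substitutes into $G(t) := \dot h^{\,2} + \tfrac{\mathrm{R}}{n(n-1)}h^2 - \tfrac{\overline{\mathrm{R}}}{(n-1)(n-2)}$. The $\dot h^{\,2}$ terms cancel, the $\mathrm{R}h^2$ coefficients combine via $\tfrac{1}{n}-\tfrac{1}{n-2} = -\tfrac{2}{n(n-2)}$, and one is left with
\[
G = -\tfrac{2h}{n-2}\!\left[\ddot h + \tfrac{\mathrm{R}}{n(n-1)}h\right] = -\tfrac{2c_1}{n-2}\,h^{2-n},
\]
where the last equality uses (ii). Thus $Gh^{n-2} = -\tfrac{2c_1}{n-2}$ is a constant $\tau$, proving (iii) and producing the claimed linear relation between $\tau$ and $c_1$ for free. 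The only mild subtlety in the whole argument is keeping track of the combinatorial coefficients in this last algebraic step; everything else is essentially a direct differentiation.
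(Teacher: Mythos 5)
Your proof is correct, and for items (i) and (ii) it coincides with the paper's argument: differentiate (\ref{RbarR}) using the constancy of $\overline{{\rm R}}$ (which, as you note, follows from (\ref{RbarR}) and the constancy of ${\rm R}$), then observe that $h^{n-1}\bigl[\ddot h+\tfrac{{\rm R}}{n(n-1)}h\bigr]$ has vanishing derivative by (i). The only place you diverge is item (iii) and the closing relation: the paper proves (iii) by showing that $\tfrac{d}{dt}\bigl\{h^{n-2}\bigl[\dot h^2+\tfrac{{\rm R}}{n(n-1)}h^2-\tfrac{\overline{{\rm R}}}{(n-1)(n-2)}\bigr]\bigr\}$ vanishes as a consequence of (\ref{RbarR}), and then obtains $\tau+\tfrac{2c_1}{n-2}=0$ in a separate step by eliminating $\ddot h$ between (\ref{RbarR}) and (\ref{warpedvss}); you instead substitute (\ref{RbarR}) algebraically into the expression $G$ and invoke (ii) directly, which yields $G=-\tfrac{2c_1}{n-2}h^{2-n}$ and hence (iii) together with the relation between $\tau$ and $c_1$ in one stroke. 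Your variant is marginally more economical since the constant $\tau$ is identified explicitly rather than merely shown to exist; both versions rest on exactly the same two ingredients, (\ref{RbarR}) and (\ref{warpedvss}), and your coefficient bookkeeping ($\tfrac1n-\tfrac1{n-2}=-\tfrac{2}{n(n-2)}$, $(n-2)+1=n-1$, $\tfrac{{\rm R}}{n(n-1)}+\tfrac{{\rm R}}{n}=\tfrac{{\rm R}}{n-1}$) checks out.
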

    \begin{proof}
           As mentioned in $(2)$ of Remark \ref{dddothddoth}, the scalar curvature $\overline{{\rm{R}}}$ of $(N, \bar{g})$ is constant due to (\ref{RbarR}) and the constancy of ${\rm R}.$ As a result, the item $(i)$ follows from differentiating both sides of (\ref{RbarR}).\\
           For the item $(ii),$ as in the fourth line from the bottom on page 668 of \cite{Kobayashi1982}, a simple computation shows that
           \[\frac{d}{dt}\Big\{h^{n-1}\big[\ddot{h}+\frac{{\rm{R}}}{n(n-1)}h\big]\Big\}=h^{n-1}\Big[\frac{d^3h}{dt^3}+(n-1)\frac{\dot{h}\ddot{h}}{h}+\frac{{\rm{R}}}{n-1}\dot{h}\Big].\]
           The desired equality is obtained by observing that the right hand side of previous formula vanishes owing to the equation in the item $(i)$.\\
           For the item $(iii),$ it is not hard to check that
           \begin{eqnarray*}
           &&\frac{d}{dt}\Big\{h^{n-2}\big[\dot{h}^2+\frac{{\rm{R}}}{n(n-1)}h^2-\frac{\overline{{\rm{R}}}}{(n-1)(n-2)}\big]\Big\}\\
              &=&h^{n-3}\dot{h}\Big[2h\ddot{h}+(n-2)\dot{h}^2+\frac{{\rm{R}}}{n-1}{h}^2-\frac{\overline{{\rm{R}}}}{n-1}\Big].
           \end{eqnarray*}
           The desired identity is derived from noting that the right hand side of previous formula vanishes, which boils down to (\ref{RbarR}).

           The relation between $c_1$ and $\tau$ is derived by eliminating $\ddot{h}$ using equations (\ref{RbarR}) and (\ref{warpedvss}), and then comparing the result with the formula established in $(iii).$ According to this relation, one can check that the formula in the item $(iii)$ is identical to the formula $(1.6)$ in \cite{Kobayashi1982}. The proof of Lemma \ref{IN} is finished.
    \end{proof}

           We now recall a classical result on $I\times_hN.$ It is well-known that the vector field $h\frac{\partial}{\partial t}$ is a conformal vector field(cf. for instance, \cite[Lemma 2.4]{Ejiri} or \cite[Lemma 2.2]{Brendle2013}). In fact, it is also closed. More precisely, we have
    \begin{Lem}\label{CCVFhpartial}
          The vector field $\xi=h\frac{\partial}{\partial t}$ is a closed conformal vector field on $I\times_hN$ with characteristic function $\dot{h}.$
    \end{Lem}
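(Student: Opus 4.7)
The plan is to invoke Lemma \ref{CCVF}, which reduces the statement to producing a smooth function $f$ on $I\times N$ with $\nabla_X\xi = f\,X$ for every $X\in\Gamma(T(I\times N))$; the closedness of $\xi$, its conformality, and the identification of $f$ as the characteristic function then all follow at once from that lemma.

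Since $\nabla_{\cdot}\xi$ is $\mathcal{C}^{\infty}(I\times N)$-linear in its first argument, I would check the candidate identity only on the two natural classes of vector fields: $X = \partial/\partial t$ and $X$ a horizontal lift of a vector field on $N$. For $X = \partial/\partial t$, I would expand $\nabla_{\partial/\partial t}(h\,\partial/\partial t)$ via the Leibniz rule and use the geodesic identity (\ref{geo}); the result simplifies to $\dot{h}\,\partial/\partial t$. For horizontal $X$, the fact that $h$ depends only on $t$ kills the term $X(h)\,\partial/\partial t$, and the remaining piece $h\,\nabla_X(\partial/\partial t)$ is converted into $\dot{h}\,X$ by (\ref{partialX}).

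Both cases therefore produce the single formula $\nabla_X\xi = \dot{h}\,X$, and an application of Lemma \ref{CCVF} with $f = \dot{h}$ immediately yields the conclusion. I do not expect any genuine obstacle: the proof is a direct two-case computation resting entirely on the connection formulas collected in Lemma \ref{LCofw}, and the only micro-subtlety worth flagging is being careful that the decomposition of an arbitrary vector field on $I\times_h N$ into vertical and horizontal components is handled in a manner compatible with the tensoriality used to reduce to the two cases above.
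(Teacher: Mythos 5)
Your proposal is correct and follows essentially the same route as the paper: both reduce the claim to verifying $\nabla_X\xi=\dot{h}X$ via Lemma \ref{CCVF} and then compute separately on $\frac{\partial}{\partial t}$ and on vector fields tangent to $N$ using (\ref{geo}) and (\ref{partialX}). The paper phrases this as a decomposition $X=X^1\frac{\partial}{\partial t}+X^2$ rather than appealing to tensoriality, but the computation is identical.
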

    \begin{proof}
          For the convenience of readers, we provide a proof here. By means of Lemma \ref{CCVF}, it suffices to prove that, for any $X\in\Gamma(T(I\times_hN)),$
          $$\nabla_X\xi=\dot{h}X.$$
          To see this, assume that $X=X^1\frac{\partial}{\partial t}+X^2$ where $X^1=g(X,\frac{\partial}{\partial t})$ and $X^2$ is the projection of $X$ onto the slice $\{t\}\times N ,$ by (\ref{geo}) and (\ref{partialX}), together with the definition of $\xi,$ we have
          \begin{eqnarray*}
          \nabla_X\xi&=& X^1\nabla_{\frac{\partial}{\partial t}}\xi+\nabla_{X^2}\xi \\
             &=&X^1\dot{h}\frac{\partial}{\partial t}+\dot{h}X^2 \\
             &=&\dot{h}X.
          \end{eqnarray*}
          This finishes the proof of Lemma \ref{CCVFhpartial}.
    \end{proof}
    \begin{Rmk}\label{fformulaR}
         \begin{itemize}[leftmargin=0.5cm, itemindent=0.4cm]
           \item[{\rm(1)}.] For the characterization of conformal vector fields on the warped product $I\times_hN ,$ one can consult {\rm\cite[Appendix A]{JaureguiWylie2015}}.
           \item[{\rm(2)}.] Under the assumption of Lemma {\rm\ref{IN}}, the item $(i)$ in Lemma {\rm\ref{IN}} can also be deduced from the following well-known identity:
         \begin{equation*}
              \Delta f+\frac{\rm{R}}{n-1}f+\frac{1}{2(n-1)}\xi({\rm{R}})=0,
        \end{equation*}
        where $f$ is the characteristic function of a conformal vector field $\xi$ on a Riemannian manifold $(M^n, g).$ One can refer to {\rm\cite[Lemma 4.2]{Xu1993}} or $(3.5)$ on page $160$ in {\rm\cite{Yano1975}} for the proof of previous identity. Indeed, by taking $\xi=h\frac{\partial}{\partial t},$ the desired formula follows from the fact that the characteristic function of $\xi$ equals $\dot{h},$ {\rm(\ref{DeltafRwp})}, and the constancy of ${\rm{R}}.$
         \end{itemize}
    \end{Rmk}

          As can be seen from the above, the warped product \(I\times_h N\) inherently possesses excellent properties. We next present two additional intriguing properties of \(I\times_h N\) with constant scalar curvature. First of all, we now prove that the $(0,2)-$tensor $i_{\frac{\partial}{\partial t}}{\rm{C}}$ vanishes on such a space. More precisely, we have the following result:
    \begin{Prop}\label{iCzero}
          Assume that $I\times_hN $ has constant scalar curvature. Then, at each point on $I\times_hN $, the following identity holds:
          $$i_{\frac{\partial}{\partial t}}{\rm{C}}(\cdot,\cdot)=0.$$
    \end{Prop}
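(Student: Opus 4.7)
The plan is a direct case-by-case verification from the definition $C_{ijk}=A_{ij,k}-A_{ik,j}$ of the Cotton tensor together with the Levi-Civita formulas of Lemma \ref{LCofw}. Because the scalar curvature $\operatorname{R}$ is constant, the Schouten tensor is $\operatorname{A}=\operatorname{Ric}-\frac{\operatorname{R}}{2(n-1)}g$, so Lemma \ref{Ricw} gives the decomposition: $\operatorname{A}(\frac{\partial}{\partial t},X)=0$ whenever $X\in\Gamma(TN)$, the quantity $\operatorname{A}(\frac{\partial}{\partial t},\frac{\partial}{\partial t})=-(n-1)\ddot h/h-\operatorname{R}/(2(n-1))$ depends only on $t$, and for $X,Y\in\Gamma(TN)$ one has $\operatorname{A}(X,Y)=\overline{\operatorname{Ric}}(X,Y)+\varphi(t)\,g(X,Y)$ for some function $\varphi$ of $t$ alone.

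First, I would invoke the skew-symmetry relation $C_{ijk}+C_{ikj}=0$ from (\ref{skewofCotton}), which makes $i_{\frac{\partial}{\partial t}}\operatorname{C}$ automatically antisymmetric; in particular $\operatorname{C}(\frac{\partial}{\partial t},\frac{\partial}{\partial t},\frac{\partial}{\partial t})=0$ and it suffices to check $\operatorname{C}(\frac{\partial}{\partial t},X,Y)=0$ by splitting $X$ and $Y$ into their horizontal and vertical parts.

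Next I would expand $(\nabla_Y \operatorname{A})(\frac{\partial}{\partial t},X)=Y(\operatorname{A}(\frac{\partial}{\partial t},X))-\operatorname{A}(\nabla_Y\frac{\partial}{\partial t},X)-\operatorname{A}(\frac{\partial}{\partial t},\nabla_Y X)$ for vertical $X,Y$, substitute the LC formulas $\nabla_Y\frac{\partial}{\partial t}=\frac{\dot h}{h}Y$ and $\nabla_Y X=\overline\nabla_Y X-\frac{\dot h}{h}g(X,Y)\frac{\partial}{\partial t}$, and use $\operatorname{A}(\frac{\partial}{\partial t},X)=0$. This reduces $(\nabla_Y \operatorname{A})(\frac{\partial}{\partial t},X)$ to $-\frac{\dot h}{h}\operatorname{A}(X,Y)+\frac{\dot h}{h}g(X,Y)\operatorname{A}(\frac{\partial}{\partial t},\frac{\partial}{\partial t})$, which is manifestly symmetric in $X$ and $Y$. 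Hence $\operatorname{C}(\frac{\partial}{\partial t},X,Y)=(\nabla_Y \operatorname{A})(\frac{\partial}{\partial t},X)-(\nabla_X \operatorname{A})(\frac{\partial}{\partial t},Y)=0$ for vertical $X,Y$.

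For the remaining case with $Y=\frac{\partial}{\partial t}$ and $X$ vertical, I would compute $(\nabla_{\frac{\partial}{\partial t}}\operatorname{A})(\frac{\partial}{\partial t},X)$ and $(\nabla_X \operatorname{A})(\frac{\partial}{\partial t},\frac{\partial}{\partial t})$. In the first, $\nabla_{\frac{\partial}{\partial t}}\frac{\partial}{\partial t}=0$ and $\nabla_{\frac{\partial}{\partial t}} X=\frac{\dot h}{h}X$, and each resulting $\operatorname{A}$-value carrying a $\frac{\partial}{\partial t}$ and a vertical slot vanishes. In the second, $\nabla_X\frac{\partial}{\partial t}=\frac{\dot h}{h}X$, so all boundary terms involve $\operatorname{A}(X,\frac{\partial}{\partial t})=0$, while the leading derivative $X(\operatorname{A}(\frac{\partial}{\partial t},\frac{\partial}{\partial t}))$ vanishes because $\operatorname{A}(\frac{\partial}{\partial t},\frac{\partial}{\partial t})$ depends only on $t$. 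Both terms therefore vanish, finishing the proof. The computation is entirely routine; the only point to watch is keeping the Leibniz bookkeeping straight in the covariant derivative of $\operatorname{A}$, so no genuine obstacle arises.
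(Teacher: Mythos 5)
Your proposal is correct and follows essentially the same route as the paper: both reduce to the three cases via the skew-symmetry of $i_{\frac{\partial}{\partial t}}{\rm C}$ and then expand the covariant derivatives of the Schouten tensor using Lemma \ref{Ricw} and the warped-product connection formulas of Lemma \ref{LCofw}. The only cosmetic difference is that in the vertical--vertical case the paper stops at ${\rm A}(\frac{\partial}{\partial t},[X,Y])=0$, whereas you substitute the explicit Levi-Civita formulas and observe that the resulting expression is symmetric in $X$ and $Y$; these are the same computation.
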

    \begin{proof}
          Firstly, since the Cotton tensor is skew symmetric with respect to the last two indices, $i_{\frac{\partial}{\partial t}}{\rm{C}}$ is skew-symmetric. Thus
          \begin{equation*}\label{C1}
                i_{\frac{\partial}{\partial t}}{\rm{C}}(\frac{\partial}{\partial t},\frac{\partial}{\partial t})=0.
          \end{equation*}
          To complete the proof, it remains to show that for any two vector fields $X,Y\in\Gamma(TN ),$
          \begin{eqnarray}\label{C2}
            i_{\frac{\partial}{\partial t}}{\rm{C}}(\frac{\partial}{\partial t},X)&=& 0, \\\label{C3}
            i_{\frac{\partial}{\partial t}}{\rm{C}}(X,Y) &=& 0.
          \end{eqnarray}
          For (\ref{C2}), by the definitions of $i_{\xi}{\rm{C}}$ and the Cotton tensor (\ref{Cotton}), together with the fact that the metric $g$ has constant scalar curvature, it is not hard to check that
          \begin{eqnarray*}
          i_{\frac{\partial}{\partial t}}{\rm{C}}(\frac{\partial}{\partial t},X)&=& {\rm{C}}(\frac{\partial}{\partial t},\frac{\partial}{\partial t},X) \\
             &=&\nabla_X{\rm{A}}(\frac{\partial}{\partial t},\frac{\partial}{\partial t})-\nabla_{\frac{\partial}{\partial t}}{\rm{A}}(\frac{\partial}{\partial t},X)\\
             &=&-2{\rm{A}}(\frac{\partial}{\partial t},\nabla_X{\frac{\partial}{\partial t}})+{\rm{A}}(\nabla_{\frac{\partial}{\partial t}}\frac{\partial}{\partial t},X)+{\rm{A}}(\frac{\partial}{\partial t},\nabla_{\frac{\partial}{\partial t}}X)
          \end{eqnarray*}
          where the last equality results from the definition of covariant derivative and (\ref{Ric1}),(\ref{Ric2}) established in Lemma \ref{Ricw}. Together with (\ref{geo}) and (\ref{partialX}), we obtain
          \begin{eqnarray*}
          i_{\frac{\partial}{\partial t}}{\rm{C}}(\frac{\partial}{\partial t},X)&=&-\frac{\dot{h}}{h}{\rm{A}}(\frac{\partial}{\partial t},X)\\
          &=&0,
          \end{eqnarray*}
          where the second equality comes from (\ref{Ric2}).

          We now verify that (\ref{C3}) holds true. Repeat similar process in the proof of (\ref{C2}) to obtain
          \begin{eqnarray*}
          &&i_{\frac{\partial}{\partial t}}{\rm{C}}(X,Y)\\
             &=&\nabla_{Y}{\rm{A}}(\frac{\partial}{\partial t},X)-\nabla_X{\rm{A}}(\frac{\partial}{\partial t},Y)\\
             &=&-{\rm{A}}(\nabla_{Y}\frac{\partial}{\partial t},X)-{\rm{A}}(\frac{\partial}{\partial t},\nabla_{Y}X)+{\rm{A}}(\nabla_{X}\frac{\partial}{\partial t},Y)+{\rm{A}}(\frac{\partial}{\partial t},\nabla_{X}Y)\\
             &=&{\rm{A}}(\frac{\partial}{\partial t},[X,Y]),
          \end{eqnarray*}
          where the last equality is due to (\ref{partialX}) and the fact that the metric $g$ is of torsion free. The desired equality (\ref{C3}) follows from combining previous formula with the relation $[X,Y]\in\Gamma(TN )$ and (\ref{Ric2}). The proof of Proposition \ref{iCzero} is completed.
    \end{proof}
    \begin{Rmk}\label{symC}
          In the situation of Proposition {\rm\ref{iCzero}}, by means of the first Bianchi identity of the Cotton tensor, it is easy to see that the $(0,2)-$tensor ${\rm{C}}(\cdot,\frac{\partial}{\partial t},\cdot)$ is symmetric.
    \end{Rmk}

          To further our analysis, we provide the explicit expression for the action of the operator $\mathcal{L}_g^{\star}$ on functions defined on $I\times_hN$. Specifically, we have
    \begin{Lem}\label{Lgh}
           On the warped product $I\times_hN,$ for any $f\in C^\infty(I\times_hN)$ and vector fields $X, Y\in\Gamma(TN),$ the following relations hold:
           \begin{eqnarray}\label{L1}
           \mathcal{L}_g^{\star}f(\frac{\partial}{\partial t},\frac{\partial}{\partial t}) &=&\frac{n-1}{h}\left(\ddot{h}f-\dot{h}\frac{\partial f}{\partial t}\right)-\frac{1}{h^2}\overline{\Delta}f, \\\label{L2}
           \mathcal{L}_g^{\star}f(X,\frac{\partial}{\partial t}) &=&X(\frac{\partial f}{\partial t}-\frac{\dot{h}}{h}f), \\\label{L3}
           \mathcal{L}_g^{\star}f(X,Y) &=&\overline{\nabla}^2 f(X,Y)-f\mathring{\overline{{\rm{Ric}}}}(X,Y)\\\nonumber
           &&-\Big[\Delta f+\frac{{\rm{R}}}{n-1}f+\frac{1}{h}\big(\ddot{h}f-\dot{h}\frac{\partial f}{\partial t}\big)\Big]g(X,Y),
         \end{eqnarray}
         where $\overline{\Delta}f,\overline{\nabla}^2 f$ denotes the Laplace-Beltrami operator, the Hessian of $f$ with respect to the metric $\bar{g}.$ In particular,
         \begin{equation}\label{Ldoth}
           -\mathcal{L}_g^{\star}\dot{h}=\dot{h}\mathring{\overline{{\rm{Ric}}}}+h^2\Big[\frac{d^3}{dt^3}h+(n-1)\frac{\dot{h}\ddot{h}}{h}+\frac{{\rm{R}}}{n-1}\dot{h}\Big]\bar{g}.
         \end{equation}
    \end{Lem}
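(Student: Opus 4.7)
\emph{Strategy.} Everything reduces to a direct computation from the definition $\mathcal{L}_g^{\star} f = \nabla^2 f - (\Delta f)\,g - f\operatorname{Ric}$. The plan is to split $\mathcal{L}_g^{\star}f$ across the three natural types of pairs of arguments on the product $I\times_h N$, namely $(\frac{\partial}{\partial t},\frac{\partial}{\partial t})$, $(X,\frac{\partial}{\partial t})$, and $(X,Y)$ with $X,Y\in\Gamma(TN)$, compute each slot from the connection formulas of Lemma \ref{LCofw} and the Ricci identities of Lemma \ref{Ricw}, and then specialise to $f=\dot h$ at the end.

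\emph{Hessian and Laplacian.} Using $\nabla^2 f(U,V)=U(Vf)-(\nabla_U V)f$ together with (\ref{geo})--(\ref{partialXY}), one obtains $\nabla^2 f(\frac{\partial}{\partial t},\frac{\partial}{\partial t})=\partial_t^2 f$, $\nabla^2 f(X,\frac{\partial}{\partial t})=X(\partial_t f-\frac{\dot h}{h}f)$ (using that $\dot h/h$ is constant along the fiber), and $\nabla^2 f(X,Y)=\overline{\nabla}^2 f(X,Y)+\frac{\dot h}{h}g(X,Y)\partial_t f$. Tracing with an orthonormal frame of the form $\{\frac{\partial}{\partial t},h^{-1}\bar e_i\}$ then gives $\Delta f=\partial_t^2 f+h^{-2}\overline{\Delta}f+(n-1)\frac{\dot h}{h}\partial_t f$.

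\emph{Assembling the three slots.} Plugging these into $\mathcal{L}_g^{\star}f$ and inserting (\ref{Ric1})--(\ref{Ric3}), formulas (\ref{L1}) and (\ref{L2}) follow essentially by cancellation (in the $(\frac{\partial}{\partial t},\frac{\partial}{\partial t})$-slot the $\partial_t^2 f$ from the Hessian cancels that in $\Delta f$; in the mixed slot both $\Delta f\cdot g$ and $f\operatorname{Ric}$ contributions vanish). The fiber-fiber slot requires care: after substitution, the coefficient of $g(X,Y)$ carries $\frac{\dot h}{h}\partial_t f-\Delta f$, $\frac{f\ddot h}{h}+\frac{f(n-2)\dot h^2}{h^2}$ from $-f\operatorname{Ric}(X,Y)$, and $-\frac{f\overline{\operatorname R}}{(n-1)h^2}$ after decomposing $\overline{\operatorname{Ric}}=\mathring{\overline{\operatorname{Ric}}}+\frac{\overline{\operatorname R}}{(n-1)h^2}g$ via Remark \ref{dddothddoth}(3). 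Invoking (\ref{RbarR}) to trade $\overline{\operatorname R}/h^2$ for $\operatorname R+(n-1)(n-2)\dot h^2/h^2+2(n-1)\ddot h/h$ collapses all $\dot h^2/h^2$ contributions and consolidates the $\ddot h/h$ terms into the compact combination $-\frac{1}{h}(\ddot h f-\dot h\partial_t f)-\frac{\operatorname R}{n-1}f$, yielding (\ref{L3}). This reorganisation is the only step that is not essentially automatic, and it is where I expect the main attention to be required.

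\emph{Specialisation to $f=\dot h$.} Since $\dot h$ depends only on $t$, one has $\overline{\Delta}\dot h=0$, $X(\dot h)=0$ for any $X\in\Gamma(TN)$, and $\ddot h\dot h-\dot h\ddot h=0$; thus (\ref{L1}) and (\ref{L2}) vanish identically and $\mathcal{L}_g^{\star}\dot h$ is supported entirely on fiber vectors. Substituting $\partial_t\dot h=\ddot h$, $\partial_t^2\dot h=d^3h/dt^3$, and $g=h^2\bar g$ on $N$-vectors into (\ref{L3}) then produces exactly (\ref{Ldoth}).
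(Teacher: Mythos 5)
Your proposal is correct and follows essentially the same route as the paper: compute the Hessian slot by slot from the warped-product connection formulas, trace to get $\Delta f$ as in (\ref{DeltafRwp}), insert the Ricci components, and use the decomposition $\mathrm{Ric}(X,Y)=\mathring{\overline{\mathrm{Ric}}}(X,Y)+\big(\tfrac{\ddot h}{h}+\tfrac{\mathrm{R}}{n-1}\big)g(X,Y)$ (which you re-derive from (\ref{RbarR}) rather than citing Remark \ref{dddothddoth}(3)) before specialising to $f=\dot h$. No gaps.
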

    \begin{proof}
          We begin with the computation of $\Delta f.$ Making use of (\ref{partialXY}), a straightforward computation indicates that
         \begin{equation}\label{DeltafRwp}
           \Delta f=\frac{\partial^2f}{\partial t^2}+(n-1)\frac{\dot{h}}{h}\frac{\partial f}{\partial t}+\frac{1}{h^2}\overline{\Delta}f,
         \end{equation}
         where $\Delta$ stands for the Laplace-Beltrami operator of the metric $g.$\\
         For (\ref{L1}), a direct calculation yields
         \begin{eqnarray*}
          \mathcal{L}_g^{\star}f(\frac{\partial}{\partial t},\frac{\partial}{\partial t}) &=& \nabla^2f(\frac{\partial}{\partial t},\frac{\partial}{\partial t})-\Delta f-f\text{Ric}(\frac{\partial}{\partial t},\frac{\partial}{\partial t}) \\
            &=&\frac{\partial^2f}{\partial t^2}+(n-1)\frac{\ddot{h}}{h}f-\Delta f,
         \end{eqnarray*}
         where the second equality is derived from (\ref{Hessian}), (\ref{geo}), and (\ref{Ric1}). In conjunction with (\ref{DeltafRwp}), we know that (\ref{L1}) is valid. \\
         For (\ref{L2}), in light of (\ref{Ric2}), we can see that
         \begin{eqnarray*}
          \mathcal{L}_g^{\star}f(X,\frac{\partial}{\partial t}) &=& \nabla^2f(X,\frac{\partial}{\partial t}) \\
            &=&X(\frac{\partial f}{\partial t})-\nabla_X\frac{\partial}{\partial t}(f)\\
            &=&X(\frac{\partial f}{\partial t})-\frac{\dot{h}}{h}X(f),
         \end{eqnarray*}
         where the second equality comes from (\ref{Hessian}) and the last equality from (\ref{partialX}).\\
         For (\ref{L3}), it is not difficult to check that
         \begin{eqnarray*}
         \mathcal{L}_g^{\star}f(X,Y)&=&\nabla^2f(X,Y)-\Delta fg(X,Y)-f\text{Ric}(X,Y) \\
            &=&\overline{\nabla}^2 f(X,Y)+\Big[\frac{\dot{h}}{h}\frac{\partial f}{\partial t}-\Delta f\Big]g(X,Y)\\
            &&-f\mathring{\overline{{\rm{Ric}}}}(X,Y)-f\Big(\frac{\ddot{h}}{h}+\frac{{\rm{R}}}{n-1}\Big)g(X,Y),
         \end{eqnarray*}
         where the second equality follows from (\ref{Hessian}), (\ref{partialXY}), and the item $(3)$ in Remark \ref{dddothddoth}. The desired expression (\ref{L3}) is derived by rearranging the preceding formula.\\
         If $f=\dot{h},$ the formula (\ref{Ldoth}) is obtained by putting (\ref{L1}), (\ref{L2}), (\ref{L3}), and (\ref{DeltafRwp}) together. This completes the proof of Lemma \ref{Lgh}.
    \end{proof}

         It is interesting to note that Lemma \ref{Lgh} does not require the scalar curvature of $I\times_hN $ to be constant. As straightforward consequences of Lemma \ref{Lgh}, we have
    \begin{Cor}\label{INRP}
         Suppose that the scalar curvature of $(N, \bar{g})$ is nonzero constant. Then the triple $(I\times N, dt\otimes dt+\bar{g}, f(t,\cdot))$ forms a vacuum static space if and only if $(N, \bar{g})$ is Einstein and $f$ is defined on $I$ satisfying {\rm(\ref{ddotf})}.
    \end{Cor}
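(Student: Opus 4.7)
The plan is to specialize Lemma~\ref{Lgh} to $h\equiv 1$, so that $\dot h = \ddot h = 0$ and the three identities (\ref{L1})--(\ref{L3}) simplify: the $(\partial_t,\partial_t)$-component of $\mathcal{L}_g^{\star}f$ reduces to $-\overline{\Delta}f$, the mixed component reduces to $X(\partial_t f)$, and the $(X,Y)$-component for $X,Y\in\Gamma(TN)$ reduces to
\[ \overline{\nabla}^2 f(X,Y)-f\,\mathring{\overline{{\rm{Ric}}}}(X,Y)-\Bigl[\Delta f+\frac{{\rm R}}{n-1}f\Bigr]\bar g(X,Y). \]
Moreover, (\ref{RbarR}) with $h\equiv 1$ gives ${\rm R}=\overline{{\rm R}}$, which is nonzero by hypothesis. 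Sufficiency is then a direct verification: for $f=f(t)$ solving (\ref{ddotf}) on an Einstein fiber, every term above vanishes since $\overline{\Delta}f=0$, $\overline{\nabla}^2 f=0$, $X(\partial_t f)=0$, $\mathring{\overline{{\rm{Ric}}}}=0$, and $\Delta f=\ddot f=-\frac{{\rm R}}{n-1}f$.

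For the converse, suppose $\mathcal{L}_g^{\star}f=0$. The mixed identity forces $X(\partial_t f)=0$ for every $X\in\Gamma(TN)$, so $\partial_t f$ is constant on each slice $\{t\}\times N$; integrating in $t$ yields a splitting $f(t,y)=\phi(t)+\psi(y)$ for smooth functions $\phi$ on $I$ and $\psi$ on $N$. The $(\partial_t,\partial_t)$-identity then gives $\overline{\Delta}\psi=0$. Tracing the $(X,Y)$-identity with $\bar g$ and using $\overline{\Delta}\psi=0$ together with $\operatorname{tr}_{\bar g}\mathring{\overline{{\rm{Ric}}}}=0$ produces the scalar equation
\[ (n-1)\ddot\phi(t)+{\rm R}\,\phi(t)+{\rm R}\,\psi(y)=0. \]
Separating variables, both $(n-1)\ddot\phi+{\rm R}\,\phi$ and $-{\rm R}\,\psi$ must equal a common constant; since ${\rm R}\neq 0$, $\psi$ must be constant, and hence $f$ depends only on $t$.

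With $f=f(t)$, the preceding scalar equation reduces to $\ddot f+\frac{{\rm R}}{n-1}f=0$, i.e., (\ref{ddotf}). Substituting $\overline{\nabla}^2 f=0$ and this relation back into the full $(X,Y)$-identity leaves $f\,\mathring{\overline{{\rm{Ric}}}}(X,Y)=0$. Since $f$ is nonconstant on $I$, it is nonzero on a nonempty open subinterval, and because $\mathring{\overline{{\rm{Ric}}}}$ depends only on the $N$-coordinate, we conclude $\mathring{\overline{{\rm{Ric}}}}\equiv 0$, i.e., $(N,\bar g)$ is Einstein. The principal obstacle is precisely the step forcing $f$ to be independent of the $N$-variable; the separation-of-variables argument that achieves this crucially requires ${\rm R}=\overline{{\rm R}}\neq 0$ and would fail for scalar-flat fibers.
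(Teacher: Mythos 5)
Your proof is correct and follows essentially the same route as the paper: specialize Lemma~\ref{Lgh} to $h\equiv 1$, use the mixed component (\ref{L2}) to split $f(t,y)=\phi(t)+\psi(y)$, exploit ${\rm R}=\overline{{\rm R}}\neq 0$ to force $\psi$ constant, and read off the Einstein condition from the $(X,Y)$-component. The only (immaterial) difference is that you obtain $\overline{\Delta}\psi=0$ directly from the $(\partial_t,\partial_t)$-component, whereas the paper derives it by tracing the $(X,Y)$-component together with (\ref{Deltaf}).
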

    \begin{proof}
         From (\ref{L1}), (\ref{L2}), (\ref{L3}), and (\ref{DeltafRwp}), it is not difficult to check that the necessity holds. We now turn our attention to the sufficiency. If $ f(t,\cdot))$ satisfies {\rm(\ref{MainEVSS})}, by {\rm(\ref{L2})}, we obtain
         $$X(\frac{\partial f}{\partial t})=0, \text{for any}~ X\in\Gamma(TN). $$
         This implies that $f$ can be decomposed as $f(t,\cdot)=f_1(t)+f_2(\cdot).$ Thanks to {\rm(\ref{Deltaf})} and (\ref{DeltafRwp}), we arrive at
         \begin{equation}\label{f1f2IN}
           \ddot{f_1}(t)+\frac{{\rm R}}{n-1}f_1(t)+\overline{\Delta}f_2+\frac{{\rm R}}{n-1}f_2=0.
         \end{equation}
         On the other hand, with the aid of {\rm(\ref{Deltaf})}, the formula {\rm(\ref{L3})} reduces to
         \begin{equation}\label{fRic}
           0=\mathcal{L}_g^{\star}f(X,Y)=\overline{\nabla}^2 f_2(X,Y)-f\mathring{\overline{{\rm{Ric}}}}(X,Y).
         \end{equation}
         By taking trace on both sides of {\rm(\ref{fRic})}, we have
         \begin{equation*}
           \overline{\Delta}f_2=0.
         \end{equation*}
         Substituting previous formula into {\rm(\ref{f1f2IN})}, one can see that
         \[\ddot{f_1}(t)+\frac{{\rm R}}{n-1}f_1(t)+\frac{{\rm R}}{n-1}f_2=0.\]
         Since \(\mathrm{R}\) is a nonzero constant, it follows that \(f_2\) must be constant. Therefore, $f$ is defined on $I$ and satisfies {\rm(\ref{ddotf})}. Finally, from {\rm(\ref{fRic})}, we conclude that $(N,\bar{g})$ is Einstein.
    \end{proof}
    \begin{Cor}\label{warpedPonh}
         Assume that $h$ is not constant on $I.$ Let $f\in C^\infty(I).$ If $(I\times_h N, f(t))$ is a vacuum static space, then it follows that $f=c_2\dot{h}$ for some constant $c_2.$
    \end{Cor}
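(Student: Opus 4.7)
The strategy is to extract, from the $(\partial_t,\partial_t)$-component of the vacuum static equation, a Wronskian-type relation between $f$ and $\dot h$, and then exhibit both functions as solutions of a common second-order linear ODE, so that the vanishing Wronskian forces linear dependence.

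First, I would apply Lemma \ref{Lgh} to the function $f(t)$. Because $f$ depends only on $t$, one has $\overline{\Delta} f = 0$ and $X(\frac{\partial f}{\partial t}) = 0$ for any $X\in\Gamma(TN)$, so (\ref{L2}) is automatically satisfied and (\ref{L1}) collapses to
$$0=\mathcal{L}_g^{\star}f\!\left(\tfrac{\partial}{\partial t},\tfrac{\partial}{\partial t}\right)=\frac{n-1}{h}\bigl(\ddot h\,f-\dot h\,\dot f\bigr).$$
Therefore $\ddot h\,f-\dot h\,\dot f\equiv 0$ on $I$, which is exactly the vanishing of the Wronskian $W(f,\dot h):=f(\dot h)^{\prime}-\dot f\,\dot h$.

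Next, I would invoke the standard fact that any (connected) vacuum static space has constant scalar curvature (see, e.g., \cite[Proposition 2.3]{Cor2000CMP}), obtained by taking the divergence of (\ref{vss}) and using the contracted second Bianchi identity. Consequently ${\rm R}$ is constant on $I\times_h N$, and by (\ref{RbarR}) so is $\overline{{\rm R}}$. Lemma \ref{IN}(i) then says that $\dot h$ satisfies the second-order linear ODE
$$u''+(n-1)\frac{\dot h}{h}\,u'+\frac{{\rm R}}{n-1}\,u=0.$$
On the other hand, tracing the vacuum static equation gives $\Delta f+\frac{{\rm R}}{n-1}f=0$, which combined with (\ref{DeltafRwp}) and $\overline{\Delta}f=0$ yields precisely the same ODE for $f$.

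Finally, two smooth solutions of a homogeneous second-order linear ODE on a connected interval whose Wronskian vanishes identically must be linearly dependent. Since $h$ is not constant on $I$, $\dot h\not\equiv 0$, so the linear dependence forces $f=c_2\,\dot h$ for some constant $c_2\in\mathbb R$, which is the claim.

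The main subtle input is the constancy of the scalar curvature: this is where the global structure of vacuum static spaces enters, and without it the equation $\ddot h f-\dot h\dot f=0$ alone would only give linear dependence on each connected component of $\{\dot h\neq 0\}$, with the constants potentially differing across components; the common ODE viewpoint is what upgrades local linear dependence to global.
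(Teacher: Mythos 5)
Your proposal is correct, and it starts exactly where the paper does: applying (\ref{L1}) with $\overline{\Delta}f=0$ to get $\ddot h\,f-\dot h\,\dot f\equiv0$ on $I$. Where you diverge is in how you finish. The paper simply rewrites this as $\frac{d}{dt}\bigl(\frac{f}{\dot h}\bigr)=0$ and declares $f/\dot h$ constant, which is only immediate on the set $\{\dot h\neq0\}$; if $\dot h$ has zeros, that set may be disconnected and the constant could a priori differ from one component to the next. Your Wronskian argument closes this gap cleanly: since the scalar curvature is constant, Lemma \ref{IN}(i) shows $\dot h$ solves $u''+(n-1)\frac{\dot h}{h}u'+\frac{\mathrm{R}}{n-1}u=0$, and tracing (\ref{vss}) together with (\ref{DeltafRwp}) shows $f$ solves the same equation, so the identically vanishing Wronskian forces global linear dependence on the connected interval $I$ by the standard uniqueness theorem. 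The cost is that you must import the constancy of $\mathrm{R}$ (which the paper's one-line proof does not need, though it is freely available for vacuum static spaces and is used elsewhere in the paper); the benefit is a proof that is insensitive to the zero set of $\dot h$ and therefore strictly more rigorous than the one in the text.
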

    \begin{proof}
         In fact, from {\rm(\ref{L1})}, we have
         \[\ddot{h}f=\dot{h}\dot{f}.\]
         This can be rearranged to yield
         \[\frac{d}{dt}\Big(\frac{f}{\dot{h}}\Big)=0.\]
         Consequently, \(\frac{f}{\dot{h}}\) is a constant, which implies the desired result.
    \end{proof}

          To investigate the influence of the fact that $\dot{h}$ is a solution to (\ref{vss}) on the fiber $(N, \bar{g}),$ we introduce the following fundamental identity:

    \begin{Prop}\label{warpedproduct3}
          Suppose that the metric $g$ of $I\times_hN $ has constant scalar curvature and $\xi=h\frac{\partial}{\partial t}.$ Then, for any vector fields $X,Y\in\Gamma(T(I\times_hN)),$ the following identity holds:
        $$\mathcal{L}_g^{\star}\dot{h}(X,Y)=-{\rm{C}}(X,\xi,Y).$$
    \end{Prop}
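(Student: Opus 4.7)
The plan is to verify the identity on pairs $(X,Y)$ drawn from the decomposition $T(I\times_h N) = \mathbb{R}\langle\frac{\partial}{\partial t}\rangle \oplus TN$. Both sides are $\mathbb{R}$-bilinear and symmetric in $(X,Y)$: the left-hand side by construction of $\mathcal{L}_g^{\star}$, and the right-hand side by Remark \ref{symC}. Since $\xi = h\frac{\partial}{\partial t}$, the right-hand side equals $-h\,{\rm C}(X,\frac{\partial}{\partial t},Y)$, so it suffices to check three cases.

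For Case (a), $X=Y=\frac{\partial}{\partial t}$: formula (\ref{L1}) reduces $\mathcal{L}_g^{\star}\dot{h}(\frac{\partial}{\partial t},\frac{\partial}{\partial t})$ to $\frac{n-1}{h}(\ddot{h}\dot{h}-\dot{h}\ddot{h}) - h^{-2}\overline{\Delta}\dot{h} = 0$, since $\dot{h}$ depends only on $t$; the right-hand side $-h\,{\rm C}(\frac{\partial}{\partial t},\frac{\partial}{\partial t},\frac{\partial}{\partial t})$ vanishes by the skew-symmetry (\ref{skewofCotton}). For Case (b), $X\in\Gamma(TN)$ and $Y=\frac{\partial}{\partial t}$ (the reverse assignment following from symmetry): formula (\ref{L2}) gives $\mathcal{L}_g^{\star}\dot{h}(X,\frac{\partial}{\partial t}) = X(\ddot{h}-\dot{h}^2/h) = 0$, and $-h\,{\rm C}(X,\frac{\partial}{\partial t},\frac{\partial}{\partial t}) = 0$ again by (\ref{skewofCotton}).

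The substantive Case (c) is $X,Y\in\Gamma(TN)$. For the left-hand side, formula (\ref{Ldoth}) applies; the bracketed factor is exactly the left-hand side of the ODE in Lemma \ref{IN}(i), which holds because ${\rm R}$ is constant, so the bracket vanishes, leaving $-\mathcal{L}_g^{\star}\dot{h}(X,Y) = \dot{h}\,\mathring{\overline{{\rm Ric}}}(X,Y)$. For the right-hand side, constancy of ${\rm R}$ gives $\nabla{\rm A}=\nabla{\rm Ric}$, so ${\rm C}(X,\frac{\partial}{\partial t},Y)=(\nabla_Y{\rm Ric})(X,\frac{\partial}{\partial t})-(\nabla_{\frac{\partial}{\partial t}}{\rm Ric})(X,Y)$. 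Expanding each covariant derivative using Lemma \ref{LCofw} together with the Ricci components (\ref{Ric1}), (\ref{Ric2}), (\ref{Ric3}) yields a contribution $\frac{\dot{h}}{h}\overline{{\rm Ric}}(X,Y)$ together with a scalar multiple of $g(X,Y)$. Eliminating the third derivative of $h$ through Lemma \ref{IN}(i) and then using (\ref{RbarR}) to rewrite $(n-2)\dot{h}^2+2h\ddot{h} = (\overline{{\rm R}}-{\rm R}h^2)/(n-1)$, the scalar coefficient collapses to $-\frac{\overline{{\rm R}}}{(n-1)h^3}\dot{h}$. This assembles to ${\rm C}(X,\frac{\partial}{\partial t},Y) = \frac{\dot{h}}{h}\,\mathring{\overline{{\rm Ric}}}(X,Y)$, and multiplication by $-h$ recovers $-\dot{h}\,\mathring{\overline{{\rm Ric}}}(X,Y)$, matching the left-hand side.

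The principal obstacle sits in Case (c): both the third-order ODE from Lemma \ref{IN}(i) (to eliminate the third derivative of $h$) \emph{and} the algebraic identity (\ref{RbarR}) (to absorb the trace combination $(n-2)\dot{h}^2+2h\ddot{h}$ into $\overline{{\rm R}}$) are indispensable, since only their joint application collapses the $g(X,Y)$ coefficient precisely into $-\frac{\overline{{\rm R}}}{(n-1)h^3}\dot{h}$, which is what is needed to reconstruct the trace-free Ricci tensor $\mathring{\overline{{\rm Ric}}}$ on the right.
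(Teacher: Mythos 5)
Your proposal is correct and follows essentially the same route as the paper's proof: the mixed and purely radial components are dispatched via (\ref{L1}), (\ref{L2}) and the symmetry/vanishing properties of $i_{\frac{\partial}{\partial t}}{\rm C}$, while the tangential case combines (\ref{Ldoth}) with Lemma \ref{IN}(i) on the left and a direct expansion of the Cotton tensor (again using Lemma \ref{IN}(i) and the warped-product Ricci formulas) on the right, both sides reducing to $\pm\dot{h}\,\mathring{\overline{{\rm Ric}}}(X,Y)$. The only cosmetic difference is that you kill the easy Cotton components by skew-symmetry in the last two slots and work with ${\rm C}(X,\frac{\partial}{\partial t},Y)$ before scaling by $h$, whereas the paper invokes Proposition \ref{iCzero} directly and expands ${\rm C}(X,\xi,Y)$; the computations and conclusions coincide.
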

    \begin{proof}
          Let $f=\dot{h}.$ By virtue of (\ref{L1}), (\ref{L2}), Remark \ref{symC}, and Proposition \ref{iCzero}, it is easy to see that, for any vector field $X\in\Gamma(TN ),$ the following relations hold
        \begin{eqnarray*}
           \mathcal{L}_g^{\star}f(\frac{\partial}{\partial t},\frac{\partial}{\partial t}) &=& 0=-{\rm{C}}(\frac{\partial}{\partial t},\xi,\frac{\partial}{\partial t}), \\
           \mathcal{L}_g^{\star}f(X,\frac{\partial}{\partial t}) &=& 0=-{\rm{C}}(\frac{\partial}{\partial t},\xi,X)=-{\rm{C}}(X,\xi,\frac{\partial}{\partial t}).
         \end{eqnarray*}
         It remains to prove that
         \begin{eqnarray}\label{LfCXY}
         \mathcal{L}_g^{\star}f(X,Y) =-{\rm{C}}(X,\xi,Y), \forall X,Y\in\Gamma(TN ).
         \end{eqnarray}
         To see this, by the definition of the Cotton tensor (\ref{Cotton}) and the fact that the metric $g$ has constant scalar curvature, we arrive at
         \begin{eqnarray*}
          {\rm{C}}(X,\xi,Y) &=&\nabla_{Y}{\rm{A}}(X,\xi)-\nabla_{\xi}{\rm{A}}(X,Y)\\
            &=&-{\rm{A}}(\nabla_{Y}X,\xi)-{\rm{A}}(X,\nabla_{Y}\xi)\\
            &&-\xi({\rm{A}}(X,Y))+{\rm{A}}(\nabla_{\xi}X,Y)+{\rm{A}}(X,\nabla_{\xi}Y),
         \end{eqnarray*}
         where the second equality results from the definition of covariant derivative to $(0,2)-$tensor and (\ref{Ric2}). Together with (\ref{partialX}), (\ref{partialXY}), and (\ref{Ric2}), we obtain
         \begin{eqnarray*}
          {\rm{C}}(X,\xi,Y) &=&\dot{h}{\rm{A}}(\frac{\partial}{\partial t},\frac{\partial}{\partial t})g(X,Y)-\xi({\rm{A}}(X,Y))+\dot{h}{\rm{A}}(X,Y)\\
          &=&-(n-1)\frac{\dot{h}\ddot{h}}{h}g(X,Y)-\xi({\rm{Ric}}(X,Y))+f{\rm{Ric}}(X,Y).
         \end{eqnarray*}
         where the second equality comes from (\ref{Ric1}), the constancy of scalar curvature and the definition of the Shouten tensor ${\rm A}.$ By means of the formula constructed in $(3)$ of Remark \ref{dddothddoth}, we conclude
         \begin{eqnarray*}
          {\rm{C}}(X,\xi,Y) &=&f{\rm{Ric}}(X,Y)-\left(\frac{2{\rm{R}}}{n-1}\dot{h}+\frac{\dot{h}\ddot{h}}{h}+\frac{d^3h}{dt^3}+(n-1)\frac{\dot{h}\ddot{h}}{h}\right)g(X,Y)\\
          &=&f\Big[{\rm{Ric}}(X,Y)-\Big(\frac{\ddot{h}}{h}+\frac{{\rm{R}}}{n-1}\Big)g(X,Y)\Big],
         \end{eqnarray*}
         where the second equality is derived from the item $(i)$ in Lemma \ref{IN}. Combining previous formula with $(3)$ of Remark \ref{dddothddoth}, we immediately get
         \begin{equation*}\label{CbarRic}
           {\rm{C}}(X,\xi,Y)=f\mathring{\overline{{\rm{Ric}}}}(X,Y).
         \end{equation*}
         On the other hand, it follows from (\ref{Ldoth}) and the item $(i)$ in Lemma \ref{IN} that
         \begin{equation*}\label{LbarRic}
           -\mathcal{L}_g^{\star}f(X,Y)=f\mathring{\overline{{\rm{Ric}}}}(X,Y).
         \end{equation*}
         Eventually, the desired formula (\ref{LfCXY}) is obtained by combining previous two formulas. This ends the proof of Proposition \ref{warpedproduct3}.
    \end{proof}

         It is interesting to note that $(0,2)-$tensor ${\rm{C}}(\cdot,\xi,\cdot)$ is naturally symmetric owing to Remark \ref{symC}. We now give some remarks about Proposition \ref{warpedproduct3}.
    \begin{Rmk}
         In {\rm\cite[Proposition 2.1]{Brendle2013}}, if
         \begin{itemize}
           \item[{\rm 1)}.] The function $\dot{h}$ is positive on $I,$
           \item[{\rm 2)}.] $\overline{{\rm{Ric}}}\geq (n-2)\rho\bar{g}$ for some constant $\rho,$
           \item[{\rm 3)}.] The function
                      $$2\frac{\ddot{h}}{h}-(n-2)\frac{\rho-\dot{h}^2}{h^2}$$
                      is non-decreasing for $t\in I,$
         \end{itemize}
          Brendle showed that
         $$-\mathcal{L}_g^{\star}\dot{h}\geq0,\text{~on,~} I\times_hN .$$
         It is worthy noting that the formula in Proposition {\rm\ref{warpedproduct3}} relates $\mathcal{L}_g^{\star}\dot{h}$ to the Cotton tensor under the condition that the metric $g$ has constant scalar curvature. When the scalar curvature of $g$ is not constant, a similar formula can still be formulated(cf. $(2)$ in Remark {\rm\ref{closedmathcalLf}} in next section).
    \end{Rmk}

    \begin{Rmk}\label{CXYN}
         In the situation of Proposition {\rm\ref{warpedproduct3}}, from the above proof, we know that
         \[-\mathcal{L}_g^{\star}f(X,Y)=f\mathring{\overline{{\rm{Ric}}}}(X,Y)={\rm{C}}(X,\xi,Y), \forall X,Y\in\Gamma(TN ).\]
    \end{Rmk}

         As a direct result of Proposition \ref{warpedproduct3}, we can conclude that

    \begin{Cor} \label{WPVSS}
         Under the assumption of Proposition {\rm\ref{warpedproduct3}}, suppose that $n\geq4,$ if the set $\{\dot{h}\neq0\}$ is dense in $I\times_hN,$ then the following five statements are equivalent:
        \begin{enumerate}[leftmargin=0.5cm, itemindent=0.4cm]
          \item[(i).] The function $\dot{h}$ is a smooth solution to {\rm(\ref{vss})} on $I\times_hN $;
          \item[(ii).] The $(0,2)-$tensor $\rm{C}(\cdot,\frac{\partial}{\partial t},\cdot)$ vanishes identically on $I\times_hN $;
          \item[(iii).] The Cotton tensor $\rm{C}$ of $I\times_hN $ is identically zero on $I\times_hN $;
          \item[(iv).] The warped product $I\times_hN $ has harmonic curvature;
          \item[(v).] The fiber $(N, \bar{g})$ is Einstein.
        \end{enumerate}
    \end{Cor}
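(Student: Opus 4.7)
The strategy is to close the cycle (ii) $\Rightarrow$ (v) $\Rightarrow$ (iii) $\Rightarrow$ (ii), then attach (i) through Proposition \ref{warpedproduct3} and (iv) through the constant-scalar-curvature hypothesis. For (i) $\Leftrightarrow$ (ii): since $\xi=h\frac{\partial}{\partial t}$ with $h>0$, Proposition \ref{warpedproduct3} reads $\mathcal L_g^{\star}\dot h=-\mathrm C(\cdot,\xi,\cdot)=-h\,\mathrm C(\cdot,\frac{\partial}{\partial t},\cdot)$, so the two conditions are identical as equations. For (ii) $\Leftrightarrow$ (v), Proposition \ref{iCzero} already kills every component of $\mathrm C(\cdot,\frac{\partial}{\partial t},\cdot)$ that has $\frac{\partial}{\partial t}$ in another slot, so only $\mathrm C(X,\frac{\partial}{\partial t},Y)$ with $X,Y\in\Gamma(TN)$ remains; the identity $h\,\mathrm C(X,\frac{\partial}{\partial t},Y)=\dot h\,\mathring{\overline{\mathrm{Ric}}}(X,Y)$ from Remark \ref{CXYN} converts (ii) into $\dot h\,\mathring{\overline{\mathrm{Ric}}}=0$. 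Density of $\{\dot h\neq 0\}$ together with the fact that $\mathring{\overline{\mathrm{Ric}}}$ descends from $N$ makes this equivalent to $(N,\bar g)$ being Einstein; the assumption $n\ge 4$ (so $\dim N\ge 3$) is what makes the Einstein condition on $N$ non-vacuous.

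The main step is (v) $\Rightarrow$ (iii). Under (v), Remark \ref{dddothddoth}(3) forces the Schouten tensor $\mathrm A$ to take the block form
\[
\mathrm A=\beta(t)\,dt\otimes dt+\gamma(t)\,h^{2}\bar g,
\]
with coefficients depending only on $t$. Components of $\mathrm C$ with $\frac{\partial}{\partial t}$ in the first slot vanish by Proposition \ref{iCzero}; the first Bianchi identity together with the skew-symmetry $C_{ijk}=-C_{ikj}$ reduces the rest to (a) $\mathrm C(X,\frac{\partial}{\partial t},Y)$ for $X,Y\in\Gamma(TN)$, which vanishes by Remark \ref{CXYN} under (v), and (b) the purely tangential component $\mathrm C(X,Y,Z)$ with $X,Y,Z\in\Gamma(TN)$. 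For (b), I would expand $\nabla_Z\mathrm A(X,Y)$ via Lemma \ref{LCofw}, use $\mathrm A(\frac{\partial}{\partial t},\cdot)=0$ on $\Gamma(TN)$ to eliminate the vertical contribution coming from $\nabla_Z X=\overline{\nabla}_Z X-\frac{\dot h}{h}g(Z,X)\frac{\partial}{\partial t}$, and invoke $\overline{\nabla}\bar g=0$ to conclude $\nabla_Z\mathrm A(X,Y)=0$, whence $\mathrm C(X,Y,Z)=0$.

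The implication (iii) $\Rightarrow$ (ii) is trivial, and (iii) $\Leftrightarrow$ (iv) follows from the constancy of the scalar curvature: then $C_{ijk}=R_{ij,k}-R_{ik,j}$, so the contracted second Bianchi identity $\nabla^l R_{ijkl}=\nabla_j R_{ik}-\nabla_i R_{jk}$ equates harmonic curvature with the Ricci tensor being Codazzi, hence with $\mathrm C\equiv 0$. The main obstacle I anticipate is the tangential computation in (v) $\Rightarrow$ (iii); once the block decomposition of $\mathrm A$ is in hand, however, it reduces to a routine unfolding of the warped-product Levi-Civita connection with the two coefficient functions depending only on $t$.
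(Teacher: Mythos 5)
Your proposal is correct and follows essentially the same route as the paper: you close the same equivalence cycle using the same key ingredients, namely Proposition \ref{warpedproduct3} for (i)$\Leftrightarrow$(ii), Proposition \ref{iCzero} and Remark \ref{CXYN} for (ii)$\Leftrightarrow$(v), and the constant-scalar-curvature form of the Cotton tensor for (iii)$\Leftrightarrow$(iv). The only (minor) divergence is in disposing of the purely tangential components: the paper computes ${\rm C}(X,Y,Z)=\overline{{\rm C}}(X,Y,Z)$ and invokes the vanishing of $\overline{{\rm C}}$ for an Einstein fiber, whereas you show $\nabla_Z{\rm A}(X,Y)=0$ directly from the block form of the Schouten tensor; both arguments are valid, and yours sidesteps the ``detailed but cumbersome calculation'' the paper alludes to.
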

    \begin{proof}
          From the formula established in Proposition \ref{warpedproduct3}, it is evident that the condition $(i)$ is equivalent to $(ii).$ Additionally, due to Remark \ref{CXYN}, the assertions $(ii)$ and $(v)$ are equivalent. Furthermore, by the definition of the Cotton tensor (\ref{Cotton}) and harmonic curvature, together with the metric $g$ has constant scalar curvature, it follows that $(iii)$ is equivalent to $(iv).$ It remains to demonstrate the equivalence between $(ii)$ and $(iii).$ It is clear that $(iii)$ implies $(ii).$ Conversely, assume that the assertion $(ii)$ is true, with the help of Proposition \ref{iCzero}, in order to show that $(iii)$ holds true, it is sufficient to verify that, for $X,Y,Z\in \Gamma(TN ),$
          \begin{equation}\label{CXYZ}
            {\rm{C}}(X,Y,Z)=0,~\text{on},~I\times_hN .
          \end{equation}
          To see this, by the definition of the Cotton tensor (\ref{Cotton}), (\ref{partialXY}), (\ref{Ric2}), (\ref{Ric3}), and the fact that both $g$ and $\bar{g}$ have constant scalar curvatures, a detailed but cumbersome calculation indicates that
          \begin{equation}\label{CXYXNCYZ}
             {\rm{C}}(X,Y,Z)=\overline{{\rm{C}}}(X,Y,Z),
          \end{equation}
          where $\overline{{\rm{C}}}$ stands for the Cotton tensor of $(N , \bar{g}).$ As mentioned before, the condition $(ii)$ implies that $(N , \bar{g})$ is Einstein. As a result, $\overline{{\rm{C}}}$ vanishes identically on $N .$ Finally, (\ref{CXYZ}) is valid due to (\ref{CXYXNCYZ}) and the vanishing of $\overline{{\rm{C}}}.$
          This finishes the proof of Corollary \ref{WPVSS}.
    \end{proof}

    \begin{Rmk}\label{NEin}
          \begin{itemize}[leftmargin=0.3cm, itemindent=0.4cm]
            \item[{\rm(1)}.] The equivalence between the items $(iv)$ and $(v)$ was proved by Derdzi\'nski in {\rm\cite[Lemma 4]{Derdzinski1980}}.
            \item[{\rm(2)}.] As already pointed out, if $(N , \bar{g})$ is Einstein, by {\rm\cite[Lemma C3]{Lafontaine1983}}, it is known that $\dot{h}$ is naturally a solution to {\rm(\ref{vss})}. Combining with Corollary {\rm\ref{WPVSS}}, we conclude that $\dot{h}$ is a solution to {\rm(\ref{vss})} on $I\times_hN $ if and only if $(N, \bar{g})$ is Einstein. This resolves the question posed at the beginning of this section.
            \item[{\rm(3)}.] If the scalar curvature of $I\times_hN$ is not constant, for $X,Y,Z\in \Gamma(TN ),$ a tedious computation reveals that
                             \begin{eqnarray*}
                             &&{\rm C}(\frac{\partial}{\partial t},\frac{\partial}{\partial t},X)=-\frac{1}{2(n-1)}X({\rm R})=-{\rm C}(\frac{\partial}{\partial t},X ,\frac{\partial}{\partial t}),\\
                             &&{\rm C}(\frac{\partial}{\partial t},X,Y)=0,{\rm C}(X,h\frac{\partial}{\partial t},Y)=\dot{h}\mathring{\overline{{\rm{Ric}}}}(X,Y),\\
                             &&{\rm{C}}(X,Y,Z)=\begin{cases}
                                             \frac{1}{4}\big[Z({\rm R})g(X,Y)-Y({\rm R})g(X,Z)\big], & \text{if } n=3, \\
                                             \overline{{\rm{C}}}(X,Y,Z)+Z(\Theta)g(X,Y)-Y(\Theta)g(X,Z), & \text{if } n\geq4.
                                             \end{cases}
                             \end{eqnarray*}
                             where $\Theta=\frac{\overline{{\rm R}}}{2(n-2)}h^{-2}-\frac{{\rm R}}{2(n-1)}.$ Particularly, when \(n\geq 4\), the equivalence between conditions \((ii)\), \((iii)\), and \((v)\) continues to hold true.
            \item[{\rm(4)}.] Under the hypothesis of Corollary {\rm\ref{WPVSS}}, if $(N,\bar{g})$ is not Einstein, then the Cotton tensor of the warped product $I\times_hN$ is not identically zero.
          \end{itemize}
    \end{Rmk}

         We conclude this section with an observation regarding vacuum static spaces on $I\times_hN$. As demonstrated in Corollary \ref{INRP}, if the scalar curvature of $(N, \bar{g})$ is nonzero constant and the triple $(I\times N, dt\otimes dt+\bar{g}, f(t,\cdot))$ forms a vacuum static space, then $f$ is defined on $I.$ Unlike the case on $I\times N,$ on the warped product $I\times_hN,$ the solution to (\ref{MainEVSS}) may depend on the fiber $N.$ To be more precise, we have
    \begin{Cla}\label{propddoth}
         Assume that the scalar curvature $\rm{R}$ of $I\times_hN$ is constant. Let $\bar{f}$ be a nonzero smooth function defined on $N.$ Then $(I\times_hN, h(t)\bar{f}(\cdot))$ is a vacuum static space if and only if $(N, \bar{g}, \bar{f})$ is a vacuum static space and the warping function $h$ satisfies
         \begin{equation}\label{heq}
           \ddot{h}+\frac{\rm{R}}{n(n-1)}h=0,\forall t\in I.
         \end{equation}
    \end{Cla}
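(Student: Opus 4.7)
The plan is to substitute $f(t,x)=h(t)\bar f(x)$ into the three components of $\mathcal L_g^\star f$ provided by Lemma \ref{Lgh} and use separation of variables. Since $\partial_t f-\frac{\dot h}{h}f=0$, formula \eqref{L2} immediately yields $\mathcal L_g^\star f(X,\frac{\partial}{\partial t})\equiv 0$ for every $X\in\Gamma(TN)$, so only \eqref{L1} and \eqref{L3} require analysis.

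Substituting $f=h\bar f$ into \eqref{L1} gives
\[\mathcal L_g^\star f\Bigl(\tfrac{\partial}{\partial t},\tfrac{\partial}{\partial t}\Bigr)=\tfrac{1}{h}\bigl[(n-1)(h\ddot h-\dot h^2)\bar f-\overline\Delta\bar f\bigr].\]
Vanishing of this expression, together with $\bar f\not\equiv 0$, forces by separation of variables a constant $\lambda$ with $(n-1)(h\ddot h-\dot h^2)=\lambda$ and $\overline\Delta\bar f=\lambda\bar f$. Feeding $\overline\Delta\bar f=\lambda\bar f$ into \eqref{L3} and using $g(X,Y)=h^2\bar g(X,Y)$ for $X,Y\in\Gamma(TN)$ yields
\[\mathcal L_g^\star f(X,Y)=h\bigl[\overline\nabla^2\bar f-\bar f\mathring{\overline{{\rm{Ric}}}}-h\Phi(t)\bar f\,\bar g\bigr](X,Y),\]
with $\Phi(t)=2\ddot h+(n-2)\tfrac{\dot h^2}{h}+\tfrac{\lambda}{h}+\tfrac{R}{n-1}h$. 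Vanishing forces $h\Phi(t)$ to be a constant $\mu$ and $\overline\nabla^2\bar f=\bar f\mathring{\overline{{\rm{Ric}}}}+\mu\bar f\bar g$. Tracing in $\bar g$ gives $\lambda=(n-1)\mu$, and rewriting via $\mathring{\overline{{\rm{Ric}}}}=\overline{{\rm{Ric}}}-\tfrac{\overline R}{n-1}\bar g$ and subtracting $\overline\Delta\bar f\,\bar g$ show the resulting equation agrees with the vacuum static equation on $N$ precisely when $\mu=-\tfrac{\overline R}{(n-1)(n-2)}$, hence $\lambda=-\tfrac{\overline R}{n-2}$.

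It remains to convert the $h$-condition into \eqref{heq}. Combining items (ii) and (iii) of Lemma \ref{IN}, via $\tau=-\tfrac{2c_1}{n-2}$, I obtain
\[h\ddot h-\dot h^2=\tfrac{nc_1}{n-2}h^{2-n}-\tfrac{\overline R}{(n-1)(n-2)},\]
and matching with $\tfrac{\lambda}{n-1}=-\tfrac{\overline R}{(n-1)(n-2)}$ forces $c_1=0$, so Lemma \ref{IN}(ii) collapses to \eqref{heq}. For the converse, assume $(N,\bar g,\bar f)$ is vacuum static and \eqref{heq} holds; then \eqref{RbarR} together with \eqref{heq} delivers $\dot h^2+\tfrac{R}{n(n-1)}h^2=\tfrac{\overline R}{(n-1)(n-2)}$ and hence $h\ddot h-\dot h^2=-\tfrac{\overline R}{(n-1)(n-2)}$, while the vacuum static equation on $N$ reads $\overline\Delta\bar f=-\tfrac{\overline R}{n-2}\bar f$ and $\overline\nabla^2\bar f-\bar f\mathring{\overline{{\rm{Ric}}}}=-\tfrac{\overline R}{(n-1)(n-2)}\bar f\bar g$; direct substitution annihilates both \eqref{L1} and \eqref{L3}.

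The main obstacle is the bookkeeping in the tangential component \eqref{L3}: tracking the coefficient $\Phi(t)$ and verifying the algebraic identity $h\Phi(t)\equiv-\tfrac{\overline R}{(n-1)(n-2)}$ under \eqref{heq} requires the full force of Lemma \ref{IN}, while the remaining steps are routine applications of Lemma \ref{Lgh} and separation of variables.
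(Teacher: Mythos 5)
Your overall strategy is the same as the paper's: substitute $f=h\bar f$ into the three components of $\mathcal{L}_g^{\star}f$ from Lemma \ref{Lgh}, observe that \eqref{L2} vanishes automatically, and read off the two conditions from the normal and tangential components. The converse direction of your argument is complete and correct. The difference is organizational: the paper proves the clean identities
\begin{equation*}
\mathcal{L}_g^{\star}(h\bar f)(X,Y)=h\,\mathcal{L}_{\bar g}^{\star}\bar f(X,Y),\qquad
\mathcal{L}_g^{\star}(h\bar f)\Bigl(\tfrac{\partial}{\partial t},\tfrac{\partial}{\partial t}\Bigr)=\tfrac{n(n-1)}{n-2}\Bigl[\ddot h+\tfrac{{\rm R}}{n(n-1)}h\Bigr]\bar f-\tfrac{1}{h}\Bigl(\overline{\Delta}\bar f+\tfrac{\overline{{\rm R}}}{n-2}\bar f\Bigr),
\end{equation*}
so the equivalence drops out with no separation of variables, whereas you introduce the separation constants $\lambda$ and $\mu$ and must then pin down their values.

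That is where your forward direction has a genuine gap. Having derived $\overline{\nabla}^2\bar f=\bar f\,\mathring{\overline{{\rm{Ric}}}}+\mu\bar f\bar g$ with $\lambda=(n-1)\mu$, you write that this ``agrees with the vacuum static equation on $N$ precisely when $\mu=-\tfrac{\overline{{\rm R}}}{(n-1)(n-2)}$, hence $\lambda=-\tfrac{\overline{{\rm R}}}{n-2}$'' --- but that only states what the conclusion would require; it does not prove that $\mu$ actually takes this value. The system $\overline{\Delta}\bar f=(n-1)\mu\bar f$, $\overline{\nabla}^2\bar f=\bar f\,\mathring{\overline{{\rm{Ric}}}}+\mu\bar f\bar g$ does not by itself determine $\mu$ in terms of $\overline{{\rm R}}$, and your subsequent deduction of $c_1=0$ uses the asserted value of $\lambda$, so as written the forward implication is circular. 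The fix is one line you omitted: by \eqref{RbarR}, $2h\ddot h+(n-2)\dot h^2=\tfrac{1}{n-1}(\overline{{\rm R}}-{\rm R}h^2)$, so your coefficient satisfies $h\Phi(t)\equiv\tfrac{\overline{{\rm R}}}{n-1}+\lambda$ identically; combined with $\mu=h\Phi$ and $\lambda=(n-1)\mu$ this forces $\mu=-\tfrac{\overline{{\rm R}}}{(n-1)(n-2)}$. (Alternatively, the constancy of $\lambda=(n-1)(h\ddot h-\dot h^2)=\tfrac{n(n-1)c_1}{n-2}h^{2-n}-\tfrac{\overline{{\rm R}}}{n-2}$ from Lemma \ref{IN} already forces $c_1h^{2-n}$ to be constant, hence $c_1=0$ and then $\lambda=-\tfrac{\overline{{\rm R}}}{n-2}$ and \eqref{warpedvss} collapses to \eqref{heq}.) With either repair your proof closes and is equivalent to the paper's.
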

    \begin{proof}
         Assume that $f(t,\cdot):=h(t)\bar{f}(\cdot).$ To begin with, we compute $\mathcal{L}_g^{\star}f.$ Using the definition of $f,$ the formula (\ref{L1}) can be rewritten as:
         \begin{eqnarray*}
           \mathcal{L}_g^{\star}f(\frac{\partial}{\partial t},\frac{\partial}{\partial t}) &=&\frac{n-1}{h}\left(h\ddot{h}-\dot{h}^2\right)\bar{f}-\frac{1}{h}\overline{\Delta}\bar{f} \\
            &=&(n-1)\Big[\ddot{h}+\frac{\rm{R}}{n(n-1)}h\Big]\bar{f}-\frac{n-1}{h}\left(\dot{h}^2+\frac{\rm{R}}{n(n-1)}h^2\right)\bar{f}-\frac{1}{h}\overline{\Delta}\bar{f}\\
            &=&(n-1)(c_1-\tau)h^{1-n}\bar{f}-\frac{1}{h}\Big(\overline{\Delta}\bar{f}+\frac{\bar{{\rm R}}}{n-2}\bar{f}\Big),
         \end{eqnarray*}
         where the last equality results from (\ref{warpedvss}) and the item $(iii)$ in Lemma \ref{IN}. Together with the relation $\tau+\frac{2c_1}{n-2}=0$ and (\ref{warpedvss}), we arrive at
         \begin{eqnarray}\label{hbarftt}
          \mathcal{L}_g^{\star}f(\frac{\partial}{\partial t},\frac{\partial}{\partial t}) &=& \frac{n(n-1)}{n-2}c_1h^{1-n}\bar{f}-\frac{1}{h}\Big(\overline{\Delta}\bar{f}+\frac{\bar{{\rm R}}}{n-2}\bar{f}\Big)\nonumber \\
            &=& \frac{n(n-1)}{n-2}\Big[\ddot{h}+\frac{\rm{R}}{n(n-1)}h\Big]\bar{f}-\frac{1}{h}\Big(\overline{\Delta}\bar{f}+\frac{\bar{{\rm R}}}{n-2}\bar{f}\Big).
         \end{eqnarray}
         We now compute the term $\mathcal{L}_g^{\star}f(X,\frac{\partial}{\partial t}).$ It is easy to see that
         $$\frac{\partial f}{\partial t}-\frac{\dot{h}}{h}f=0.$$
         Therefore, the formula (\ref{L2}) turns into
         \begin{equation}\label{hbarftx}
           \mathcal{L}_g^{\star}f(X,\frac{\partial}{\partial t})=0.
         \end{equation}
         To address the term $\mathcal{L}_g^{\star}f(X,Y),$ we first simplify (\ref{DeltafRwp}) as follows:
         \begin{equation}\label{Deltafhbarf}
           \Delta f=\big[\ddot{h}+(n-1)\frac{\dot{h}^2}{h}\big]\bar{f}+\frac{1}{h}\overline{\Delta}\bar{f}.
         \end{equation}
         From the proof of (\ref{L3}) and the relation $f=h\bar{f},$ we have
         \begin{eqnarray*}
         \mathcal{L}_g^{\star}f(X,Y)=\overline{\nabla}^2 f(X,Y)+\Big[\frac{\dot{h}^2}{h}\bar{f}-\Delta f\Big]g(X,Y)-f\text{Ric}(X,Y).
         \end{eqnarray*}
         Inserting (\ref{Deltafhbarf}) into previous formula, we immediately get
         \begin{eqnarray}\nonumber
         \mathcal{L}_g^{\star}f(X,Y)&=&h\overline{\nabla}^2 \bar{f}(X,Y)-h\overline{\Delta}\bar{f}\bar{g}(X,Y)\\\nonumber
         &&-f\Big\{\text{Ric}(X,Y)+\frac{1}{h^2}\big[h\ddot{h}+(n-2)\dot{h}^2\big]g(X,Y)\Big\}\\\label{hbarftxy}
         &=&h\mathcal{L}_{\bar{g}}^{\star}\bar{f}(X,Y),
         \end{eqnarray}
         where the second equality follows from (\ref{Ric3}) and the definition of the operator $\mathcal{L}_{\bar{g}}^{\star}\bar{f}.$ The desired equivalence is obtained by putting (\ref{hbarftt}), (\ref{hbarftx}), (\ref{hbarftxy}), and the equation (\ref{Deltaf}) for $\bar{f}$ together. This ends the proof of Claim \ref{propddoth}.
    \end{proof}
    \begin{Rmk}
       \begin{itemize}[leftmargin=0.3cm, itemindent=0.4cm]
        \item[{\rm 1.}] From the proof of Claim {\rm\ref{propddoth}}, it follows that if $(N, \bar{g},\bar{f})$ is a vacuum static space with zero scalar curvature, then $(I\times N, dr\otimes dr+\bar{g}, \bar{f})$ is also a vacuum static space with zero scalar curvature.
        \item[{\rm 2.}] By {\rm\cite{Bess1987}}, it is well-known that the warped product $I\times_hN$ is Einstein if and only if $(N, \bar{g})$ is Einstein and the warping function $h$ satisfies {\rm(\ref{heq})}.
        \end{itemize}
    \end{Rmk}

        Through Claim \ref{propddoth}, we can construct interesting vacuum static spaces. We begin with basic notation. Given an integer $k\ge2$ and a positive number $r,$ the hyperboloid model of hyperbolic space $\mathbb{H}^k(r)$ is defined as follows:
        \[\mathbb{H}^k(r):=\{x\in\mathbb{R}^n:-x_0^2+x_1^2+\cdots x_k^2=-r^2, x_0>0\}.\]
        The canonical metric of $\mathbb{H}^k(r)$ is denoted by $g_{k, r}.$ Let $f_{k, r}\in\text{Span}\{x_0, x_1, \cdots, x_k\}.$
    \begin{Exa}\label{basicex}
        Let $k\in\{1,2,\cdots,n-3\}$ and $n\geq k+3.$ Consider the warped product manifold $(M^n_k, g_k)$ defined by
        $$\mathbb{R}\times_{\cosh t}\left(\mathbb{H}^k(r_{k})\times\mathbb{H}^{n-k-1}(s_k)\right),$$
        where $r_{k}=\sqrt{k/(n-1)}$ and $s_k=\sqrt{(n-k-2)/(n-1)}.$
        Then the triple $(M^n_k, g_k, $ $\cosh t f_{k,r_{k}})$ is a vacuum static space with scalar curvature $-n(n-1).$
    \end{Exa}

        In fact, it is well-known that the Riemannian product
        $$\left(\mathbb{H}^k(r_{k})\times\mathbb{H}^{n-k-1}(s_{k}), g_{k,r_{k}}+g_{n-k-1, s_{k}}, f_{k,r_{k}}\right)$$
        is an $(n-1)-$dimensional vacuum static space with scalar curvature $-(n-1)(n-2).$ Applying Claim {\rm\ref{propddoth}}, we deduce that $(M^n_k, g_k, \cosh t f_{k,r_{k}})$ is a vacuum static space with scalar curvature $-n(n-1).$

        Additionally, as noted in $(3)$ of Remark \ref{NEin}, for each $k,$ the Cotton tensor of $(M^n_k, g_k, \cosh t f_{k,r_{k}})$ is not identically zero. In particular, $(M^n_k, g_k, \cosh t f_{k,r_{k}})$ is not locally conformally flat and therefore not accounted for in \cite{Lafontaine1983} or\cite{Kobayashi1982}. Moreover, the Ricci curvature tensor of $(M^n_k, g_k, \cosh t f_{k,r_{k}})$ has three distinct eigenvalues. Therefore, the associated $(0,3)-$tensor ${\rm T}$ is also non identically zero.

        As another application of Claim \ref{propddoth}, we show that the kernel of the operator $\mathcal{L}_g^{\star}$ is spanned by a singleton $\{\dot{h}\}$ when $c_1$ in (\ref{warpedvss}) is nonzero. To be more precise, we have
    \begin{Cla}\label{dimKwar}
        Let $N$ be an Einstein manifold. Assume that the scalar curvature $\rm{R}$ of $I\times_hN$ is constant. If $h$ is nonconstant and $c_1\neq0,$ then
        \[{\rm{Ker}}\mathcal{L}_g^{\star}={\rm{Span}}(\{\dot{h}\}).\]
    \end{Cla}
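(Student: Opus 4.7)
The inclusion $\operatorname{Span}(\{\dot h\}) \subseteq \operatorname{Ker}\mathcal{L}_g^{\star}$ is immediate from \cite[Lemma C3]{Lafontaine1983} under the Einstein hypothesis on $N$. For the reverse inclusion, given $f \in \operatorname{Ker}\mathcal{L}_g^{\star}$, the plan is to exploit the three scalar identities of Lemma \ref{Lgh}: the mixed component will split off the $N$-dependence of $f$, the tangential component will eliminate that $N$-dependence, and Corollary \ref{warpedPonh} will identify what remains.

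First, (\ref{L2}) says that $\partial_t f - \tfrac{\dot h}{h} f$ is independent of $p\in N$. Substituting $f = h(t)\,u(t,p)$ converts this into $\partial_t u$ being independent of $p$, so $u$ separates as $u(t,p) = \Phi(t) + B(p)$ and one obtains the ansatz $f(t,p) = f_1(t) + h(t)\,B(p)$ with $f_1 := h\Phi$. Next, using $\mathring{\overline{\operatorname{Ric}}}=0$ (since $N$ is Einstein) together with the standard trace consequence $\Delta f + \tfrac{\mathrm R}{n-1}f = 0$ in (\ref{L3}), the tangential equation reduces to
\[
\overline{\nabla}^2 B \;=\; \bigl(\ddot h\,f - \dot h\,\partial_t f\bigr)\,\bar g.
\]
The left side is $t$-independent, so applying $\partial_t$ and substituting the ansatz will produce a separated identity of the form $\gamma(t)\,B(p) + \delta(t) = 0$ on $I\times N$, where $\gamma(t) = \dddot h\,h - \dot h\,\ddot h$. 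If $\gamma \not\equiv 0$, then $B$ is forced to be constant, so $f$ depends only on $t$, and Corollary \ref{warpedPonh} yields $f = c_2\,\dot h$. (The subcase where $f$ collapses to a nonzero constant is excluded since $\operatorname{Ric}(\partial_t,\partial_t) = -(n-1)\ddot h/h$ is not identically zero when $c_1\ne 0$, so the equation $-f\,\operatorname{Ric} = 0$ would force $f \equiv 0$.)

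The main obstacle is therefore establishing that $\gamma \not\equiv 0$ precisely under the hypothesis $c_1 \ne 0$. I expect this to follow from Lemma \ref{IN}(i), which expresses $\dddot h$ in terms of $\dot h$ and $\ddot h$, combined with the ODE (\ref{warpedvss}); after a short calculation $\gamma$ should collapse to the clean closed form $\dddot h\,h - \dot h\,\ddot h = -\,n\,c_1\,\dot h\,h^{1-n}$. Since $c_1 \ne 0$ by hypothesis and $\dot h \not\equiv 0$ (as $h$ is nonconstant), $\gamma$ is not identically zero, which completes the argument. The fact that $\gamma$ is essentially $c_1$ times a nonvanishing quantity makes the hypothesis $c_1 \ne 0$ sharp: when $c_1 = 0$ the coefficient $\gamma$ vanishes, the ansatz no longer forces $B$ to be constant, and the kernel may be strictly larger than $\operatorname{Span}(\{\dot h\})$.
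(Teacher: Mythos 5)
Your argument is correct and follows essentially the same route as the paper: the same decomposition $f=f_1(t)+h(t)B(\cdot)$ obtained from (\ref{L2}), a separation-of-variables step whose $B$-coefficient is a nonzero multiple of $c_1$ via Lemma \ref{IN}$(i)$ and (\ref{warpedvss}), and Corollary \ref{warpedPonh} to finish. The only (cosmetic) difference is that the paper separates variables by combining (\ref{L1}) with the trace identity (\ref{Deltaf}), whereas you use the tangential component (\ref{L3}) and differentiate in $t$; your quantity $\dddot{h}\,h-\dot{h}\ddot{h}=-nc_1\dot{h}h^{1-n}$ is exactly $-\dot{h}h$ times the coefficient $\frac{n}{h}\big(\ddot{h}+\frac{\mathrm{R}}{n(n-1)}h\big)$ that the paper's computation produces.
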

    \begin{proof}
        If the scalar curvature $\rm{R}$ of $I\times_hN$ is constant, it follows from the item $(ii)$ in Lemma \ref{IN} that (\ref{warpedvss}) holds for some constant $c_1$. Note that $c_1$ may be positive, zero, or negative. Meanwhile, when $N$ is Einstein, the equivalence between $(i)$ and $(v)$ in Corollary \ref{WPVSS} implies that $\dot{h}\in{\rm{Ker}}\mathcal{L}_g^{\star}.$ Thus
        \[{\rm{Span}}(\{\dot{h}\})\subset{\rm{Ker}}\mathcal{L}_g^{\star}.\]
        Conversely, suppose that $\mathcal{L}_g^{\star}f=0,$ take advantage of (\ref{L2}) to find
        \[X\left(\frac{\partial}{\partial t}\Big(\frac{f}{h}\Big)\right)=0, \text{for any}~ X\in\Gamma(TN).\]
        This implies that $f$ can be decomposed as
        $$f(t,\cdot)=f_1(t)+h(t)f_2(\cdot),$$
        where $f_2(\cdot)$ is defined on $N.$ With the aid of Corollary \ref{warpedPonh}, it remains to show that $f_2(\cdot)$ is constant on $N.$ We argue by contradiction, supposing that $f_2(\cdot)$ is not constant on $N.$ From (\ref{L1}), it follows that
        \[\frac{n-1}{h}\left(\ddot{h}f-\dot{h}\frac{\partial f}{\partial t}\right)-\frac{1}{h^2}\overline{\Delta}f=0.\]
        In addition, substituting (\ref{DeltafRwp}) into (\ref{Deltaf}) yields
        \[\frac{\partial^2f}{\partial t^2}+(n-1)\frac{\dot{h}}{h}\frac{\partial f}{\partial t}+\frac{1}{h^2}\overline{\Delta}f+\frac{{\rm R}}{n-1}f=0.\]
        The sum of the above two equations produces
        \[\frac{\partial^2f}{\partial t^2}+(n-1)\frac{\ddot{h}}{h}f+\frac{{\rm R}}{n-1}f=0.\]
        Making use of the decomposition $f(t,\cdot)=f_1(t)+h(t)f_2(\cdot),$ we have
        \[\ddot{f}_1(t)+(n-1)\frac{\ddot{h}}{h}f_1(t)+\frac{{\rm R}}{n-1}f_1(t)+\frac{n}{h}\left(\ddot{h}+\frac{{\rm R}}{n(n-1)}h\right)f_2(\cdot)=0.\]
        Since $f_2$ is not constant, it follows that
        \[\ddot{h}+\frac{{\rm R}}{n(n-1)}h=0,~\text{for}~ t\in I.\]
        Combining this with (\ref{warpedvss}), we conclude that the constant \(c_1\) must be zero, which contradicts our assumption $c_1\neq0$. This finishes the proof of Claim \ref{dimKwar}.
    \end{proof}

    \begin{Rmk}
        If $N$ is a closed Einstein manifold, $h$ is not constant, and the scalar curvature of $\mathbb{S}^1\times_hN$ is constant, it is clear that $c_1>0.$ Thus the kernel of the linear operator $\mathcal{L}_g^{\star}$ on $\mathbb{S}^1\times_hN$ is spanned by the singleton $\{\dot{h}\},$ as established in {\rm Lemma C3} of {\rm\cite{Lafontaine1983}}.
    \end{Rmk}

\section{The action of the operator $\mathcal{L}_g^{\star}$ on the characteristic function of a conformal vector field}
    This section is devoted to extending Proposition \ref{iCzero} and Proposition \ref{warpedproduct3} to the context of general Riemannian manifolds endowed with a conformal vector field. Furthermore, an application of Theorem \ref{firstthm} is presented.
  \subsection{Main identity}
    We start by reviewing some notions from Riemannian geometry. Let $(M^n ,g)$ be a Riemannian manifold of dimension $n$ and $\xi\in\Gamma(TM).$ We denote $\xi$ and the dual one-form $\xi^{\flat}$ with respect to a local coordinate system $\left\{\frac{\partial}{\partial x^i}\right\}_{i=1}^n$ by $\xi=\xi^i\frac{\partial}{\partial x^i}$ and $\xi^{\flat}=\xi^{\flat}_{i}dx^i,$ respectively. The differential of $\xi^{\flat}$ is given by
         \[d\xi^{\flat}=-2\Sigma_{j<k}\mathcal{P}_{jk}dx^j\wedge dx^k,\]
         where $\mathcal{P}_{jk}=\frac{1}{2}(\xi^{\flat}_{j,k}-\xi^{\flat}_{k,j}).$ 

         We now generalize Proposition \ref{iCzero} as follows.
    \begin{Prop}\label{iC}
          Let $(M^n, g)$ be a Riemannian manifold of dimension $n$ and $\xi$ be a conformal vector field on $(M^n, g)$. Then
          $$i_{\xi}{\rm{C}}=\frac{1}{2(n-1)}d{\rm{R}}\wedge \xi^{\flat}-\frac{1}{2}d\delta d\xi^{\flat}+{\rm Ric}(d\xi^{\flat}),$$
          where ${\rm Ric}(d\xi^{\flat})=\Sigma_{j<k}(R_{kl}\mathcal{P}_{lj}+\mathcal{P}_{kl}R_{lj})dx^j\wedge dx^k.$
          In particular, if $\xi$ is closed, then
          $$i_{\xi}{\rm{C}}=\frac{1}{2(n-1)}d{\rm{R}}\wedge \xi^{\flat}.$$
    \end{Prop}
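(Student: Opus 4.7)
The plan is to unpack $(i_{\xi}{\rm C})_{jk} = \xi^{i}C_{ijk}$ in index form using the definition (\ref{Cotton}) of the Cotton tensor together with the decomposition of the Schouten tensor ${\rm A} = {\rm Ric} - \tfrac{{\rm R}}{2(n-1)}g$. The scalar-curvature piece contributes $-\tfrac{1}{2(n-1)}(\xi_{j}\nabla_{k}{\rm R} - \xi_{k}\nabla_{j}{\rm R})$, which is immediately recognizable as $\tfrac{1}{2(n-1)}(d{\rm R}\wedge\xi^{\flat})_{jk}$ and accounts for the first term of the claimed identity. The task therefore reduces to matching the Ricci contribution $\xi^{i}(\nabla_{k}R_{ij} - \nabla_{j}R_{ik})$ with $-\tfrac{1}{2}d\delta d\xi^{\flat} + {\rm Ric}(d\xi^{\flat})$.

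For this Ricci contribution I would apply Leibniz to each summand, $\xi^{i}\nabla_{k}R_{ij} = \nabla_{k}(R_{ij}\xi^{i}) - R_{ij}\nabla_{k}\xi^{i}$, and introduce $\alpha_{j} := R_{ij}\xi^{i} = (i_{\xi}{\rm Ric})_{j}$. The two derivative pieces combine into $-(d\alpha)_{jk}$. For the remaining pieces I would substitute $\nabla_{k}\xi^{i}$ from the conformal equation (\ref{dfa}), which splits as $f\delta_{k}^{i}$ plus an antisymmetric part determined by $\mathcal{P}$. The $f$-contributions cancel by the symmetry of ${\rm Ric}$, and the remaining terms reassemble, after a short bookkeeping step using only the antisymmetry of $\mathcal{P}$ and the symmetry of ${\rm Ric}$, into precisely $R_{kl}\mathcal{P}_{lj} + \mathcal{P}_{kl}R_{lj} = {\rm Ric}(d\xi^{\flat})_{jk}$.

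The crucial and only nontrivial step is to prove that $d\alpha = \tfrac{1}{2}\,d\delta d\xi^{\flat}$. For this I would invoke the contracted Ricci identity
$$\alpha_{j} = R_{jl}\xi^{l} = \nabla_{i}\nabla_{j}\xi^{i} - \nabla_{j}\nabla_{i}\xi^{i},$$
and then plug in the conformal equation along with its trace $\nabla_{i}\xi^{i} = nf$, yielding a representation of the form
$$\alpha_{j} = -(n-1)\nabla_{j}f + \nabla_{i}\mathcal{P}_{j}{}^{i}.$$
Since $d\xi^{\flat}$ is, up to a universal factor, the antisymmetric part of $\nabla\xi^{\flat}$, the divergence $\nabla_{i}\mathcal{P}_{j}{}^{i}$ is proportional to $(\delta d\xi^{\flat})_{j}$, with constant $\tfrac{1}{2}$. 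Applying the exterior derivative then kills the exact one-form $-(n-1)df$ and produces $d\alpha = \tfrac{1}{2}d\delta d\xi^{\flat}$, as required. Assembling the three contributions delivers the identity. The closed case is immediate, since $d\xi^{\flat} = 0$ forces both $\delta d\xi^{\flat} = 0$ and ${\rm Ric}(d\xi^{\flat}) = 0$, leaving only the $\tfrac{1}{2(n-1)}\,d{\rm R}\wedge\xi^{\flat}$ term.
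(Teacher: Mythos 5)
Your proof is correct, and it reaches the identity by a more direct route than the paper does. The paper does not expand $\xi^iC_{ijk}$ from the definition of the Cotton tensor; instead it differentiates the second-order identity $R^l_{ijk}\xi^\flat_l=g_{ij}f_k-g_{ik}f_j-\mathcal{P}_{jk,i}$ (its equation (\ref{RxifP})), contracts over $i$, and then needs the \emph{second} Bianchi identity to convert $R^l_{ijk,i}\xi^\flat_l$ into the Cotton and $d\mathrm{R}$ terms, plus the closedness of $d\xi^\flat$, the Ricci commutation formula and the first Bianchi identity to reduce $\mathcal{P}_{jk,ii}+R^l_{ijk}\mathcal{P}_{li}$ to $\mathcal P_{ij,ik}-\mathcal P_{ik,ij}+R_{kl}\mathcal P_{lj}+\mathcal P_{kl}R_{lj}$. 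You instead start from $C_{ijk}=A_{ij,k}-A_{ik,j}$, integrate by parts onto $\alpha=i_\xi\mathrm{Ric}$, and identify $\alpha_j=-(n-1)f_j+\nabla_i\mathcal{P}^i{}_j$ via the once-contracted commutation identity — which is exactly the paper's formula (\ref{divofP}), so your ``crucial step'' is already available in the paper's preliminaries. Your organization buys a shorter argument that avoids the second Bianchi identity and the full manipulation of $\mathcal{P}_{jk,ii}$; the paper's version has the advantage that the same differentiated identity (\ref{RxifP}), contracted on a different index, immediately yields the companion formula for $\mathcal{L}_g^{\star}f$ in Theorem \ref{firstthm}, so the two computations are run in parallel. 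The only cosmetic issue in your write-up is the index placement $\nabla_i\mathcal{P}_j{}^i$ versus $\nabla_i\mathcal{P}^i{}_j$ (a sign, absorbed anyway into the convention for $\delta$ and irrelevant after applying $d$); all the substantive cancellations you claim — the $f$-terms dying by symmetry of $\mathrm{Ric}$, the reassembly into $R_{kl}\mathcal{P}_{lj}+\mathcal{P}_{kl}R_{lj}$, and $d\alpha=\tfrac12 d\delta d\xi^\flat$ — check out.
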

    \begin{proof}
          Let $f$ be the characteristic function of $\xi.$ Let $p\in M$ and $\left\{\frac{\partial}{\partial x^i}\right\}_{i=1}^n$ be normal coordinate system centered at $p.$ We firstly rewrite (\ref{dfa}) and (\ref{CovariantofP}) locally as
          \begin{equation}\label{dfa1}
            \xi^{\flat}_{i,j}=fg_{ij}+\mathcal{P}_{ij}
          \end{equation}
          and
          \begin{equation}\label{RxifP}
             R_{ijk}^l\xi_l^{\flat}=g_{ij}f_k-g_{ik}f_j-\mathcal{P}_{jk,i}.
          \end{equation}
          By taking covariant derivative on both sides of (\ref{RxifP}) in the direction of $\frac{\partial}{\partial x^i},$ together with (\ref{dfa1}), we can deduce that
           \begin{equation}\label{Rijki}
             R_{ijk,i}^l\xi_l^{\flat}=-\mathcal{P}_{jk,ii}-R_{ijk}^l\mathcal{P}_{li}.
           \end{equation}
           For the first term in left hand side of (\ref{Rijki}), with the help of the second Bianchi identity of the Riemann curvature tensor, we find
           \begin{eqnarray*}
            R_{ijk,i}^l\xi_l^{\flat} &=&-(R_{iki,j}^l+R_{iij,k}^l)\xi_l^{\flat} \\
              &=&(R_{lj,k}-R_{lk,j})\xi^l.
           \end{eqnarray*}
           Taking into account the definition of the Cotton tensor (\ref{Cotton}), it can be inferred that
           \begin{eqnarray}\label{2}
            R_{ijk,i}^l\xi_l^{\flat} =C_{ljk}\xi^l+\frac{1}{2(n-1)}\big(\nabla_k{\rm{R}} ~\xi_j^{\flat}-\nabla_j{\rm{R}} ~\xi_k^{\flat}\big).
           \end{eqnarray}
           We now address the two terms in right hand side of (\ref{Rijki}). To do that, by virtue of the fact that the two form $d\xi^\flat$ is closed, we have
           \begin{equation*}
             \mathcal P_{jk,i}+\mathcal P_{ki,j}+\mathcal P_{ij,k}=0.
           \end{equation*}
           As a result, a simple computation gives rise to
           \[\mathcal P_{jk,ii}=\mathcal P_{ik,ji}-\mathcal P_{ij,ki}.\]
           Making use of Ricci's identity, we obtain
           \begin{eqnarray*}
           \mathcal P_{jk,ii}&=&\mathcal P_{ik,ij}+R_{jl}\mathcal P_{lk}+R^l_{kji}\mathcal P_{il}-\mathcal P_{ij,ik}-R_{kl}\mathcal P_{lj}-R^l_{jki}\mathcal P_{il} \\
              &=&\mathcal P_{ik,ij}-\mathcal P_{ij,ik}+R_{jl}\mathcal P_{lk}-R_{kl}\mathcal P_{lj}-(R^l_{kji}+R^l_{jik})\mathcal P_{li}.
           \end{eqnarray*}
           The first Bianchi identity for Riemann curvature tensor leads to
           \begin{equation}\label{3}
             -\mathcal{P}_{jk,ii}-R_{ijk}^l\mathcal{P}_{li}=\mathcal P_{ij,ik}-\mathcal P_{ik,ij}+R_{kl}\mathcal P_{lj}+\mathcal P_{kl}R_{lj}
           \end{equation}
            Plugging (\ref{2}) and (\ref{3}) into (\ref{Rijki}), it follows that
            \begin{eqnarray*}
            C_{ljk}\xi^l&=&\frac{1}{2(n-1)}\big({\nabla_j{\rm{R}} ~\xi_k^{\flat}-\nabla_k\rm{R}} ~\xi_j^{\flat}\big)+\mathcal P_{ij,ik}-\mathcal P_{ik,ij}\\
            &&+R_{kl}\mathcal P_{lj}+\mathcal P_{kl}R_{lj}.
            \end{eqnarray*}
           This completes the proof of Proposition \ref{iC}.
    \end{proof}

           It is interesting to point out that the $(0,2)-$tensor $(R_{kl}\mathcal{P}_{lj}+\mathcal{P}_{kl}R_{lj})$ is skew-symmetric due to the symmetry of the Ricci curvature tensor and the skew-symmetry of $\mathcal{P}_{jk}.$ In addition, under the assumptions of Proposition \ref{iC}, if the metric $g$ has constant scalar curvature, then \(i_{\xi}{\rm{C}}=0\). Thus Proposition \ref{iC} is indeed a generalization of Proposition \ref{iCzero}.

           We are now ready to prove Theorem \ref{firstthm}.\\
    {\bf Proof of Theorem \ref{firstthm}.} Let $f$ be characteristic function of a conformal vector field $\xi$ on a Riemannian manifold $(M^n, g)$ of dimension $n$. By taking covariant derivative on both sides of (\ref{RxifP}) in the direction of $\frac{\partial}{\partial x^j},$ together with (\ref{dfa1}), we obtain
          \begin{eqnarray}
             f_{ik}-\Delta_gfg_{ik}-fR_{ik}= R_{ijk,j}^l\xi_l^{\flat}+R_{ijk}^l\mathcal{P}_{lj}+\mathcal{P}_{jk,ij}.
          \end{eqnarray}
          Repeat the procedure in the proof of Proposition \ref{iC} to imply
          \begin{eqnarray*}
          &&f_{ik}-\Delta_gfg_{ik}-fR_{ik}\\
             &=&R_{jkli,j}\xi^l+R_{ijk}^l\mathcal{P}_{lj}+\mathcal{P}_{jk,ji}+R_{il}\mathcal{P}_{lk}+R^l_{kij}\mathcal{P}_{jl}\\
             &=&-(R_{jkij,l}+R_{jkjl,i})\xi^l+R_{ijk}^l\mathcal{P}_{lj}+\mathcal{P}_{jk,ji}+R_{il}\mathcal{P}_{lk}+R^l_{kij}\mathcal{P}_{jl}\\
             &=&(R_{ki,l}-R_{kl,i})\xi^l+\mathcal{P}_{jk,ji}+R_{il}\mathcal{P}_{lk},
          \end{eqnarray*}
          where the last equality follows from the definition of Ricci curvature tensor and the algebraic properties of Riemann curvature tensor. Finally, the definition of the Cotton tensor (\ref{Cotton}) yields
          \[f_{ik}-\Delta_gfg_{ik}-fR_{ik}={\rm\Phi}_{ik},\]
          where
          \begin{equation}\label{Phi}
           {\rm\Phi}_{ik}=-C_{kli}\xi^l-\frac{1}{2(n-1)}\big(\nabla_i{\rm{R}} ~\xi_k^{\flat}-\xi({\rm{R}})g_{ik}\big)+\mathcal{P}_{jk,ji}+R_{il}\mathcal{P}_{lk}.
          \end{equation}
          This completes the proof of Theorem \ref{firstthm}.

    \begin{Rmk}\label{closedmathcalLf}
          \begin{itemize}[leftmargin=0.5cm, itemindent=0.4cm]
            \item[{\rm(1)}.] Relied on Proposition {\rm\ref{iC}} and the second equality in {\rm(\ref{skewofCotton})}, we can conclude that the $(0,2)-$tensor ${\rm\Phi}$ is symmetric.
            \item[{\rm(2)}.] In the situation of Theorem {\rm\ref{firstthm}}, if $\xi$ is closed, then
       \[(\mathcal{L}_g^{\star}f)_{ik}=-C_{kli}\xi^l+\frac{1}{2(n-1)}(\xi({\rm{R}})g_{ik}-\nabla_i{\rm{R}} ~\xi_k^{\flat}).\]
         Moreover, if the metric $g$ has constant scalar curvature, then
          $$(\mathcal{L}_g^{\star}f)_{ik}=-C_{kli}\xi^l.$$
         It is worth noting that when \(\mathrm{C}\equiv 0\), the previous formula reduces to the expression found in the sixth line from the bottom on page $69$ in {\rm\cite{Lafontaine1983}}.
            \item[{\rm(3)}.] By taking trace on both sides of the identity in Theorem {\rm\ref{firstthm}}, together with the relation $\delta^2\xi^\flat=0,$ one can get the formula in the item $(2)$ of Remark {\rm\ref{fformulaR}}.
          \end{itemize}
    \end{Rmk}
   \subsection{Application}
          We now present an application of Theorem \ref{firstthm}. To be more precise, we will apply Theorem \ref{firstthm} to prove Theorem \ref{Cxizero}.\\
    {\bf Proof of Theorem \ref{Cxizero}.}
         The proof of the rigidity result employs a method analogous to the one used in the proof of Proposition D4 in \cite{Lafontaine1983}. Let $f$ be characteristic function of $\xi.$ Under the assumptions of Theorem \ref{Cxizero}, on the basis of Remark \ref{closedmathcalLf}, it is easy to see that the function $f$ fufills
         \[f_{ik}-\Delta f g_{ik}-f R_{ik}=-C_{kji}\xi^j.\]
         By taking trace on both sides of previous formula, one can compute that
         \[\Delta f+\frac{\rm{R}}{n-1}f=0.\]
         Combining the previous two formulas, we deduce that
         \begin{equation}\label{fvss}
           f_{ik}+  \frac{\rm{R}}{n(n-1)}fg_{ik}-f E_{ik}=-C_{ijk}\xi^j.
         \end{equation}
         On the other hand, by taking $X=\xi$ in (\ref{R}), we know that $\xi$ and $\nabla f$ are linearly dependent. Let $\mathcal{Z}$ denote the set of zero point of $\xi.$ Therefore, there exists a smooth function $\rho:M\backslash \mathcal{Z}\rightarrow\mathbb R$ such that
         \begin{equation}\label{newclosed}
                       \nabla f+\rho\xi=0,\text{~in~} M\backslash \mathcal{Z}.
         \end{equation}
         Take account of (\ref{nablaXi}) to infer
         \begin{equation}\label{nabla2f}
           \nabla^2f=-d\rho\otimes\xi^\flat-\rho fg,\text{~in~} M\backslash \mathcal{Z}.
         \end{equation}
         Put (\ref{fvss}) and (\ref{nabla2f}) together to imply
         \begin{equation}\label{fErho}
           fE_{ik}=-\rho_i\xi_k^\flat+(\frac{{\rm{R}}}{n(n-1)}-\rho)fg_{ik}+C_{ijk}\xi^j,\text{~in~} M\backslash \mathcal{Z}.
         \end{equation}
         Keeping remind of (\ref{newclosed}), we obtain
         \[fE_{ik}T_{ijk}f_{j}=-\rho fE_{ik}T_{ijk}\xi_{j}^\flat=0,\text{~in~} M\backslash \mathcal{Z},\]
         where the second equality follows from (\ref{fErho}), (\ref{Tiji=0}), the assumption $C_{ijk}\xi^j=0,$ and the fact that $(0,3)-$tensor ${\rm T}$ is skew-symmetric with respect to the last two indices. In conjunction with Lemma \ref{formforTfE} and the fact that the set $\mathcal{Z}$ is isolated and the set $\{f\neq0\}$ is dense in $M$, one has
         \[{\rm T}\equiv0,\text{~in~} M.\]
         We distinguish the following three cases.\\
         {\bf Case I.} For $n=3,$ from (\ref{fCVSS}), we see that the Cotton tensor $\rm C$ is zero on $M^3$. Hence, $(M^3 ,g)$ is locally conformally flat. According to \cite[Theorem 3.5]{Kobayashi1982} or \cite[Theorem C1]{Lafontaine1983}, we deduce that $(M^3, g)$ is isometric to either $\mathbb S^3(1)$ or a finite quotient of $\mathbb S^1(\sqrt{1/3})\times\mathbb S^2(\sqrt{1/3})$ or a finite quotient of the warped product $\mathbb S^1(\sqrt{1/3})\times_h \mathbb{S}^{2}(\sqrt{1/3}).$\\
         {\bf Case II.} For $n=4,$ following the proof of Theorem 20 in \cite{Jian2023}, we find that the vanishing of $\rm T$ implies the vanishing of the Cotton tensor $\rm C.$ Combining this with (\ref{fCVSS}), we have
         \[i_{\nabla f}W\equiv0,\text{~on~} M.\]
         Combining this with the classical identity for the Weyl tensor:$W_{ijkl}W_{pjkl}=\frac{1}{4}|\operatorname W|^{2}$ $ g_{ip}$(cf.\cite[p.413]{Derdzinski1983}), it follows that $(M^4, g)$ is locally conformally flat. From \cite[Theorem 3.5]{Kobayashi1982} or \cite[Theorem C1]{Lafontaine1983}, we deduce that $(M^4, g)$ is isometric to either $\mathbb S^4(1)$ or a finite quotient of $\mathbb S^1(1/2)\times\mathbb S^{3}(\sqrt{1/2})$ or a finite quotient of $\mathbb S^1(1/2)\times_h\mathbb S^{3}(\sqrt{1/2}).$\\
         {\bf Case III.} For $n\ge5,$ by \cite{QingYuan2013}, we know that $(M^n, g)$ is isometric to either $\mathbb S^n(1)$ or a finite quotient of $\mathbb S^1(\sqrt{1/n})\times F$ or a finite quotient of $\mathbb S^1(\sqrt{1/n})\times_h F$ where $F$ is an $(n-1)-$dimensional closed Einstein manifold with Einstein constant $n.$\\ The desired results are obtained by observing that each conformal vector field on the Riemannian product $\mathbb S^1(\sqrt{1/n})\times F $ is Killing. This completes the proof of Theorem \ref{Cxizero}.

    \begin{Rmk}
         \begin{enumerate}[leftmargin=0.5cm, itemindent=0.4cm]
           \item[{\rm(1)}.] It is worthwhile to note that the curvature assumption is automatically satisfied when $(M^n, g)$ has harmonic curvature. The associated classification result was obtained in Proposition ${\rm D4}$ of {\rm\cite{Lafontaine1983}}.
           \item[{\rm(2)}.] From the proof of Theorem {\rm\ref{Cxizero}}, it is easy to see that the $(0,3)-$tensor ${\rm T}$ still vanishes when $(M^n, g)$ is a complete connected Riemannian manifold.
         \end{enumerate}
    \end{Rmk}

           As further applications of Theorem \ref{firstthm}, we will utilize it in the subsequent section to investigate the rigidity of vacuum static spaces and related critical spaces that admit a non-Killing closed conformal vector field.

\section{Critical spaces with closed conformal vector fields}
         The main purpose of this section is to classify closed manifolds with a non-Killing closed conformal vector field that admit nonconstant solutions to (\ref{MainEab}) satisfying (\ref{ba=0}).
    \subsection{A fundamental ingredient}
         We begin with a basic lemma.
    \begin{Lem}\label{CxiCxi}
       Let $(M^n, g)$ be an $(n\ge3)-$dimensional Riemannian manifold with constant scalar curvature. If $\xi$ is a closed conformal vector field on $(M^n, g)$, then the following two assertions hold true:
       \begin{itemize}[leftmargin=0.5cm, itemindent=0.4cm]
         \item[(i).] $C_{ijk}\xi^i=0.$
         \item[(ii).] $\Xi_{ik}\xi^i=0,$ where $\Xi_{ik}:=C_{ijk,j}.$
       \end{itemize}
    \end{Lem}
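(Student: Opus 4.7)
The plan is to derive both identities directly from results already available in the excerpt, namely Proposition \ref{iC} and Lemma \ref{CCVF}. The proof is short enough that I will essentially describe both steps in sequence.

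For assertion $(i)$, I would invoke Proposition \ref{iC} specialized to a closed conformal vector field, which states
\[
i_{\xi}{\rm C} \;=\; \frac{1}{2(n-1)}\, d{\rm R}\wedge \xi^{\flat}.
\]
Under the hypothesis that the scalar curvature ${\rm R}$ is constant, the right-hand side vanishes, so $i_{\xi}{\rm C}=0$. Unwinding the definition of the left interior multiplication, $(i_{\xi}{\rm C})_{jk}=C_{ljk}\xi^{l}$, and this is exactly $C_{ijk}\xi^{i}=0$ after relabeling indices. Alternatively, as a cross-check, one can obtain the same conclusion from Theorem \ref{firstthm}: for closed $\xi$ with constant ${\rm R}$, Remark \ref{closedmathcalLf}(2) gives $(\mathcal{L}_g^{\star}f)_{ik}=-C_{kli}\xi^{l}$, and combining the symmetry of $\mathcal{L}_g^{\star}f$ with the first Bianchi identity $C_{ijk}+C_{jki}+C_{kij}=0$ and the skew-symmetry $C_{ijk}=-C_{ikj}$ forces the contraction to vanish.

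For assertion $(ii)$, I would differentiate the identity $C_{ijk}\xi^{i}=0$ obtained in $(i)$. Since $\xi$ is closed and conformal, Lemma \ref{CCVF} gives $\nabla_{l}\xi^{i}=f\,\delta^{i}_{l}$, where $f$ is the characteristic function of $\xi$. Applying $\nabla_{l}$ to the identity in $(i)$ yields
\[
C_{ijk,l}\,\xi^{i}+f\,C_{ljk}\;=\;0.
\]
Taking the trace $l=j$ and using the trace-free property $C_{jjk}=0$ from \eqref{vofC}, the second term drops out and we are left with $C_{ijk,j}\,\xi^{i}=0$, i.e. $\Xi_{ik}\xi^{i}=0$.

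There is no real obstacle here: part $(i)$ is an immediate specialization of Proposition \ref{iC}, and part $(ii)$ is a one-line differentiation using the algebraic trace-free symmetries of the Cotton tensor together with the simple form $\nabla\xi=f\,{\rm id}$ of a closed conformal vector field. The only point requiring care is to take the contraction in the right slot in part $(ii)$ so that exactly the trace-free combination $C_{jjk}$ appears on the derivative side.
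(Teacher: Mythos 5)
Your proof is correct and follows essentially the same route as the paper: part (i) is read off from Proposition \ref{iC} together with the constancy of the scalar curvature, and part (ii) is obtained by covariantly differentiating $C_{ijk}\xi^i=0$, contracting the derivative index with the middle slot, and using $\nabla_l\xi^i=f\delta^i_l$ with the trace-free identity $C_{jjk}=0$. The extra cross-check via Theorem \ref{firstthm} is valid but not needed.
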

    \begin{proof}
        The first item follows from Proposition \ref{iC}. The item $(ii)$ is derived by taking covariant derivative on both sides of the equation in the item $(i)$ in the direction of $\frac{\partial}{\partial x^j},$ together with (\ref{nablaXi}) and (\ref{vofC}). The proof of Lemma \ref{CxiCxi} is finished.
    \end{proof}

        Relied on Lemma \ref{CxiCxi}, we have
    \begin{Prop}\label{ldCTzero}
        Let $(M^n, g)$ be an $(n\ge3)-$dimensional connected Riemannian manifold admitting a non-Killing closed conformal vector field $\xi.$ Assume that $f$ is a nonconstant smooth solution to {\rm(\ref{MainEab})} on $(M^n, g)$. If $\xi$ and $\nabla f$ are linearly dependent on $M^n,$ then both $(f+a){\rm C}$ and the $(0,3)-$tensor ${\rm T}$ vanishes identically on $(M^n, g)$.
    \end{Prop}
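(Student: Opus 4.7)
The plan is to mimic the strategy used in the proof of Theorem~\ref{Cxizero}, adapted to the more general equation \eqref{MainEab}. Writing $Z=\xi^{-1}(0)$ for the zero set of $\xi$, the linear-dependence hypothesis will give $\nabla f=-\rho\xi$ on $M\setminus Z$ for a smooth function $\rho$. Differentiating and using the closed-conformal identity $\nabla_X\xi=\phi X$ (with $\phi$ the characteristic function of $\xi$) produces $f_{ij}=-\rho_i\xi_j^\flat-\rho\phi\,g_{ij}$; symmetry of the Hessian forces $\nabla\rho=\mu\xi$ for some function $\mu$, and substituting into \eqref{MainEab} together with the trace-free condition on $E$ yields the key structure
\[
(f+a)E_{ij}=-\mu\Bigl(\xi_i^\flat\xi_j^\flat-\tfrac{|\xi|^2}{n}g_{ij}\Bigr).
\]

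Feeding this into Lemma~\ref{formforTfE} multiplied by $(f+a)$, the $\xi_i^\flat\xi_k^\flat$ term collapses because $\xi^j\xi^k T_{ijk}=0$ (by skew-symmetry of ${\rm T}$ in its last two indices), while the $g_{ik}$ term collapses by $T_{iji}=0$. This gives $(f+a)\|{\rm T}\|^2\equiv 0$ on $M\setminus Z$, hence ${\rm T}\equiv 0$ on $\{f+a\neq 0\}\cap(M\setminus Z)$. On the interior of $\{f+a=0\}$ (which can be nonempty only when $b+\tfrac{\mathrm R}{n(n-1)}a=0$) one has $\nabla f\equiv 0$, so ${\rm T}$ vanishes directly from its defining formula \eqref{T}. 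Continuity together with the density of $M\setminus Z$ in $M$ then propagates ${\rm T}\equiv 0$ to all of $M$.

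With ${\rm T}\equiv 0$, identity \eqref{fCVSS} reduces to $(f+a)C_{ijk}=W_{lijk}f^l$, so the remaining task is to show $W_{lijk}\xi^l\equiv 0$ on $M\setminus Z$. I would combine the closed-conformal identity $R_{mijk}\xi^k=g_{ij}\phi_m-\phi_ig_{mj}$ (a rewriting of \eqref{R}) with the expression $A_{ij}=\eta\xi_i^\flat\xi_j^\flat+\alpha g_{ij}$ for the Schouten tensor (obtained from the formula for $(f+a)E$ above). Substituting into $R=W+\tfrac{1}{n-2}(A\owedge g)$ and contracting one index with $\xi$ leads, after routine bookkeeping, to
\[
W_{mijk}\xi^k=\Bigl[\gamma+\tfrac{2\alpha+\eta|\xi|^2}{n-2}\Bigr]\bigl(g_{ij}\xi_m^\flat-\xi_i^\flat g_{mj}\bigr),
\]
where $\phi_m=\gamma\xi_m^\flat$ is pinned down by \eqref{ricandf}. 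I expect the main obstacle to be verifying that the bracketed coefficient vanishes identically: both $\gamma$ and $(2\alpha+\eta|\xi|^2)/(n-2)$ must simplify to $\pm[\mathrm R+(n-1)\eta|\xi|^2]/[n(n-1)]$, and this cancellation algebraically reflects the compatibility between the structure of $(f+a)E$ and the closed-conformal constraint \eqref{ricandf}. Once $i_\xi W\equiv 0$ on $M\setminus Z$ is established, substituting $f^l=-\rho\xi^l$ and invoking density yield $(f+a)C\equiv 0$ on $M$.
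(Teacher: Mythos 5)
Your proposal takes a genuinely different route from the paper's, and in outline it works; moreover, the single step you flag as ``the main obstacle'' does close, and more easily than you expect. The paper never extracts the rank-one structure $(f+a){\rm E}=-\mu\bigl(\xi^{\flat}\otimes\xi^{\flat}-\tfrac{\|\xi\|^{2}}{n}g\bigr)$ that you obtain from the symmetry of the Hessian. Instead, it first proves $(f+a){\rm C}\equiv0$: contracting \eqref{Rlijkprop} and \eqref{RlijkflVSS} with $C_{ijk}$ gives $(f+a)\|{\rm C}\|^{2}=2E_{ik}C_{ijk}f_{j}$; Lemma \ref{CxiCxi} (resting on Proposition \ref{iC} and the constancy of the scalar curvature) plus linear dependence gives $C_{ijk}f_{i}=0$, and differentiating that relation yields $(f+a)E_{ik}C_{ijk}=0$. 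Only afterwards does the paper contract with $T_{ijk}$ and invoke Lemma \ref{formforTfE} to kill ${\rm T}$. You reverse the order: ${\rm T}\equiv0$ comes first, directly from the rank-one form of $(f+a){\rm E}$ and Lemma \ref{formforTfE} (your collapsing of the two terms is valid because $f_{j}$ is proportional to $\xi_{j}^{\flat}$, ${\rm T}$ is skew in its last two indices, and $T_{iji}=0$), and $(f+a){\rm C}\equiv0$ is then reduced via \eqref{fCVSS} to showing $i_{\xi}{\rm W}=0$ on $\{f+a\neq0\}\setminus\mathcal{Z}$. For that last point you need not compute $\gamma$, $\alpha$, $\eta$ at all: your substitution (using \eqref{R}, the rank-one form of ${\rm A}$, and the fact that \eqref{ricandf} together with the rank-one Ricci tensor forces $\nabla\varphi\parallel\xi$) already shows that $W_{mijk}\xi^{k}$ equals $\beta\,(g_{ij}\xi^{\flat}_{m}-\xi^{\flat}_{i}g_{mj})$ for some scalar $\beta$; since the Weyl tensor is totally trace-free, contracting this identity with $g^{mj}$ gives $0=-(n-1)\beta\,\xi^{\flat}_{i}$, so $\beta=0$ wherever $\xi\neq0$, with no bookkeeping. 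In comparison, the paper's argument is shorter and leans on the prepared machinery of Proposition \ref{iC}, while yours exposes the pointwise algebraic structure of ${\rm E}$ (rank one along $\xi$) and avoids Lemma \ref{CxiCxi} entirely; that structure is essentially the one re-derived as \eqref{fErho} and exploited again in the proofs of Theorems \ref{CSS} and \ref{VSSCVF}.
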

    \begin{proof}
        Let $\varphi$ be the characteristic function of $\xi.$ As mentioned in subsection 2.3, the scalar curvature of $(M^n, g)$ is constant. We firstly show that the Cotton tensor is zero. To see this, according to (\ref{RxifP}) and the closedness of $\xi,$ it follows that
       \begin{equation}\label{Rlijkprop}
         R_{lijk}\xi^l=g_{ij}\varphi_k-g_{ik}\varphi_j,
       \end{equation}
       where $\varphi$ is characteristic function of $\xi.$ By contracting with $C_{ijk}$ on both sides of (\ref{Rlijkprop}), in conjunction with (\ref{vofC}), we have
       \begin{equation*}
         R_{lijk}\xi^lC_{ijk}=0.
       \end{equation*}
       On the other hand, by contracting with $C_{ijk}$ on both sides of (\ref{RlijkflVSS}), in conjunction with (\ref{vofC}) and the fact that the Cotton tensor is skew symmetric with respect to the last two indices, we get
        \begin{equation*}
         R_{lijk}f_lC_{ijk}=-2E_{ik}C_{ijk}f_{j}+(f+a)\|{\rm C}\|^2.
       \end{equation*}
       In view of previous two formulas, the assumption that $\xi$ and $\nabla f$ are linearly dependent on $M^n,$ and the fact that the zero set of $\xi$ is isolated, one can see that
       \begin{equation}\label{CECf}
         (f+a)\|{\rm C}\|^2=2E_{ik}C_{ijk}f_{j}, \text{on}, M.
       \end{equation}
       We now turn our attention to the term $E_{ik}C_{ijk}f_{j}.$ To do that, from the item $(i)$ and $(ii)$ in Lemma \ref{CxiCxi}, the assumption that $\xi$ and $\nabla f$ are linearly dependent on $M^n$ and the zero set of $\xi$ is isolated, we conclude
       \begin{equation}\label{Cfzero}
         C_{ijk}f_i=0, \text{on}, M
       \end{equation}
       and
       \begin{equation}\label{Ccoafzero}\Xi_{ik}f_i=0, \text{on}, M.\end{equation}
       Therefore, by taking covariant derivative on both sides of (\ref{Cfzero}) in the direction of $\frac{\partial}{\partial x^k},$ together with (\ref{Ccoafzero}), (\ref{MainEab}), (\ref{vofC}), and the fact that ${\rm C}$ is skew symmetric with respect to the last two indices, one has
       \begin{equation*}
         (f+a)E_{ik}C_{ijk}=0, \text{on}, M.
       \end{equation*}
       Combine previous formula  with (\ref{CECf}) to imply
       \begin{equation*}
         (f+a)\|{\rm C}\|^2=0, \text{on}, M.
       \end{equation*}
       We next show that ${\rm T}$ also vanishes identically on $M.$ To see this, by contracting with $T_{ijk}$ on both sides of (\ref{Rlijkprop}) and (\ref{RlijkflVSS}), in conjunction with (\ref{Tiji=0}), the assumption that $\xi$ and $\nabla f$ are linearly dependent on $M^n,$ the fact that the zero set of $\xi$ is isolated, the first equality in (\ref{Tijk=-Tikj}), and the vanishing of $(f+a){\rm C},$ we obtain
       \[0=-2E_{ik}T_{ijk}f_{j}, \text{on}, M.\]
       The vanishing of ${\rm T}$ is deduced from combining previous formula with Lemma \ref{formforTfE}. The proof of Proposition \ref{ldCTzero} is completed.
    \end{proof}
    \begin{Rmk}\label{nlin}
       \begin{itemize}[leftmargin=0.5cm, itemindent=0.4cm]
         \item[{\rm(1)}.] Proposition {\rm\ref{ldCTzero}} does not depend on the sign of the scalar curvature.
         \item[{\rm(2)}.] The conclusion holds provided that the characteristic function $\varphi$ satisfies {\rm(\ref{MainEVSS})}.
         \item[{\rm(3)}.] In general, the vectors $\xi$ and $\nabla f$ may be linearly independent. For instance, consider the unit sphere $\mathbb{S}^n(1)=\{x=(x_1,\cdots,x_{n+1})\in\mathbb{R}^{n+1}:\sum_{i=1}^{n+1}x_i^2=1\},$ and let $k\neq l\in\{1,\cdots,n+1\}.$ Assume that $\xi=\nabla x_k$ and $f=x_l+b.$ It is well-known that $\xi$ is a gradient conformal vector field and $f$ satisfies {\rm(\ref{MainEVSS})} on $\mathbb{S}^n(1).$ However, $\xi$ and $\nabla f$ are linearly independent in the set $\{x\in\mathbb{S}^n(1):x_k^2+x_l^2<1\}.$
       \end{itemize}
    \end{Rmk}
  \subsection{Critical spaces with closed conformal vector fields}

    \begin{Lem}\label{xiCVFtwoformulas}
        Let $(M^n, g)$ be an $(n\ge3)-$dimensional connected Riemannian manifold and $\xi$ be a conformal vector field on $(M^n, g)$ with characteristic function $\varphi.$ Assume that $f$ is a nonconstant smooth solution to {\rm(\ref{MainEab})} on $(M^n, g)$. Then the following two assertions hold true:
        \begin{itemize}[leftmargin=0.5cm, itemindent=0.4cm]
          \item[{\rm (1)}.] $n(f_j\varphi_k-f_k\varphi_j)+\frac{\operatorname R}{n-1}(f_j\xi_k^{\flat}-f_k\xi_j^{\flat})=\mathcal{P}_{jk,i}f_i+f_j\mathcal{P}_{ik,i}-f_k\mathcal{P}_{ij,i}-(f+a)C_{ijk}\xi^i.$
          \item[{\rm (2)}.] $\xi(f)E_{ik}-\langle\nabla\varphi+\frac{\operatorname R}{n(n-1)}\xi, \nabla f\rangle g_{ik}=-f_k(n\varphi_i+\frac{\operatorname R}{n-1}\xi_i^{\flat})+f_j\mathcal{P}_{ji,k}+f_k\mathcal{P}_{li,l}+(f+a)C_{ijk}\xi^j.$
        \end{itemize}
    \end{Lem}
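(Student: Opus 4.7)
The plan is to derive both identities from three basic ingredients: the conformal curvature identity
$$R^l_{ijk}\xi^\flat_l = g_{ij}\varphi_k - g_{ik}\varphi_j - \mathcal P_{jk,i}$$
(from (\ref{RxifP}) / (\ref{CovariantofP})); the critical-equation curvature identity (\ref{RlijkflVSS}); and the divergence formula (\ref{divofP}), which together with the skew-symmetry of $\mathcal P$ reads $\mathcal P_{ik,i} = R_{kl}\xi^l + (n-1)\varphi_k$, and therefore yields $E_{ij}\xi^j = \mathcal P_{li,l} - (n-1)\varphi_i - \tfrac{R}{n}\xi^\flat_i$.

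For (1), contract (\ref{RxifP}) with $f^i$ and (\ref{RlijkflVSS}) with $\xi^i$. The two left-hand sides equal $g(R(\partial_j,\partial_k)\nabla f,\xi)$ and $g(R(\partial_j,\partial_k)\xi,\nabla f)$ respectively, and are negatives of one another because $R(\partial_j,\partial_k)$ is a metric-skew operator. Equating (with sign flip), substituting the expression for $E_{ij}\xi^j$ above, and combining like terms (the $\varphi$-contributions add up to $n(f_j\varphi_k - f_k\varphi_j)$, and the scalar-curvature contributions $\tfrac{R}{n} + \tfrac{R}{n(n-1)}$ telescope to $\tfrac{R}{n-1}$) produces (1) directly.

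For (2), contract (\ref{RlijkflVSS}) with $\xi^j$. The term $(E_{ij}\xi^j)f_k$ immediately produces the $f_k\mathcal P_{li,l}$ piece via the divergence formula. The residual quantity $R^l_{ijk}f_l\xi^j = g(R(\xi,\partial_k)\partial_i,\nabla f)$ cannot be read off directly from (\ref{RxifP}) or (\ref{RlijkflVSS}) because $\xi$ is now contracted with the first index-slot of $R^l_{ijk}$. To handle it, apply the first Bianchi identity to write
$$R(\xi,\partial_k,\partial_i,\nabla f) = -R(\partial_k,\partial_i,\xi,\nabla f) - R(\partial_i,\xi,\partial_k,\nabla f).$$
The first summand is computed from (\ref{RxifP}) after contraction with $f$, while the second is obtained from (\ref{CovariantofP}) rewritten as $R(\partial_i,\xi)\partial_k = \nabla_i\mathcal P(\partial_k) - \varphi_k\partial_i + g_{ik}\nabla\varphi$. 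Assembling these contributions produces expressions involving $f^s\mathcal P_{ik,s}$, which can be rewritten using the cyclic identity $\mathcal P_{ij,k} + \mathcal P_{jk,i} + \mathcal P_{ki,j} = 0$ (encoding the closedness of $d\xi^\flat$) as $f^s\mathcal P_{ik,s} = f_j\mathcal P_{jk,i} - f_j\mathcal P_{ji,k}$. After this substitution the $\mathcal P$-terms collapse precisely into $f_j\mathcal P_{ji,k} + f_k\mathcal P_{li,l}$, and collecting $\varphi$- and $R$-coefficients yields (2).

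The main obstacle is the index bookkeeping: one must convert between several different contractions of the Riemann curvature with $\xi$ and $\nabla f$, keeping track of the Ricci-identity sign convention, the pair symmetry, and the first Bianchi identity. The key observation that unlocks (2) is that the ``unfavorable'' contraction $R^l_{ijk}f_l\xi^j$ can be split via first Bianchi into two contractions that are directly expressible by (\ref{RxifP}) and (\ref{CovariantofP}); without this splitting, $R(\xi,\partial_k)\partial_i$ is not immediately accessible in terms of the conformal data $(\varphi, \mathcal P)$.
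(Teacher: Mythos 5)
Your proposal is correct and follows essentially the same route as the paper: both identities come from contracting (\ref{RxifP}) and (\ref{RlijkflVSS}) against $\nabla f$ and $\xi$ in the appropriate slots and eliminating ${\rm Ric}(\xi)$ via (\ref{divofP}). The only deviation is in item (2), where your claim that $R^l_{ijk}f_l\xi^j$ cannot be read off directly from (\ref{RxifP}) is an overstatement---the pair symmetry of the Riemann tensor gives $R_{lijk}f_l\xi^j=R_{lkji}\xi^lf_j$, i.e.\ the first contraction with $i$ and $k$ interchanged, which is exactly how the paper ``combines the previous two formulas''; your detour through the first Bianchi identity, (\ref{CovariantofP}), and the cyclic identity for $\nabla\mathcal{P}$ arrives at the same expression, so nothing is lost.
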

    \begin{proof}
        For the item $(1),$ multiplying both sides of (\ref{RxifP}) by $f_i$ and summing up over $i$ from $1$ to $n,$ we deduce
        \begin{equation*}
             R_{lijk}\xi^lf_i=f_j\varphi_k-f_k\varphi_j-\mathcal{P}_{jk,i}f_i.
       \end{equation*}
       On the other hand, multiplying both sides of (\ref{RlijkflVSS}) by $\xi^i$ and summing up over $i$ from $1$ to $n,$ it follows that
       \begin{eqnarray*}
       R_{lijk}f_{l}\xi^i&=&f_{k}\big(R_{ij}\xi^i-\frac{\operatorname R}{n-1}\xi_j^{\flat}\big)-f_{j}\big(R_{ik}\xi^i-\frac{\operatorname R}{n-1}\xi_k^{\flat}\big)+(f+a)C_{ijk}\xi^i \\
          &=& (n-1)(f_j\varphi_k-f_k\varphi_j)+\frac{\operatorname R}{n-1}(f_j\xi_k^{\flat}-f_k\xi_j^{\flat})\\
          &&+f_k\mathcal{P}_{ij,i}-f_j\mathcal{P}_{ik,i}+(f+a)C_{ijk}\xi^i,
       \end{eqnarray*}
       where the second equality follow from (\ref{divofP}). The desired equality is achieved by gathering the previous two formulas together.\\
       For the item $(2),$ multiplying both sides of (\ref{RxifP}) by $f_j$ and summing up over $j$ from $1$ to $n,$ one can see that
       \begin{equation*}
             R_{lijk}\xi^lf_j=f_i\varphi_k-\langle\nabla f,\nabla\varphi\rangle g_{ik}-f_j\mathcal{P}_{jk,i}.
       \end{equation*}
       On the other hand, multiplying both sides of (\ref{RlijkflVSS}) by $\xi^j$ and summing up over $j$ from $1$ to $n,$ we arrive at
       \begin{eqnarray*}
       R_{lijk}f_{l}\xi^j&=&f_{k}\big(R_{ij}\xi^j-\frac{\operatorname R}{n-1}\xi_i^{\flat}\big)-\xi(f)\big(E_{ik}-\frac{\operatorname R}{n(n-1)}g_{ik}\big)+(f+a)C_{ijk}\xi^j \\
          &=& -(n-1)f_k\varphi_i-\frac{\operatorname R}{n-1}f_k\xi_i^{\flat}-\xi(f)\big(E_{ik}-\frac{\operatorname R}{n(n-1)}g_{ik}\big)\\
          &&+f_k\mathcal{P}_{li,l}+(f+a)C_{ijk}\xi^j,
       \end{eqnarray*}
       where the second equality follow from (\ref{divofP}). The desired equality is derived by combining the previous two formulas.
    \end{proof}

       Based on Theorem \ref{Cxizero}, Proposition \ref{ldCTzero}, and Lemma \ref{xiCVFtwoformulas}, we have
    \begin{Thm}\label{CSS}
        Let $(M^n, g)$ be an $(n\ge3)-$dimensional closed Riemannian manifold with scalar curvature $n(n-1)$ that admits a non-Killing closed conformal vector field. Suppose that $f$ is a nonconstant smooth solution to (\ref{MainEab}) on $(M^n, g)$ with $a$ and $b$ satisfying
          \begin{equation}\label{ba=0}
            b+\frac{\operatorname R}{n(n-1)}a\neq0.
          \end{equation}
        Then $(M^n, g)$ is isometric to one of the spaces listed in Theorem {\rm\ref{Cxizero}}.
    \end{Thm}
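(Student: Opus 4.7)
The strategy is to reduce to Theorem \ref{Cxizero} by proving that the symmetric $(0,2)$-tensor $C(\cdot,\xi,\cdot)$ vanishes identically on $M^n$. As a preliminary, the standard argument behind \cite[Proposition 2.3]{Cor2000CMP} extends to solutions of (\ref{MainEab}), so $R$ is constant, and Lemma \ref{CxiCxi}(i) already furnishes the skew contraction $C_{ijk}\xi^i=0$. To pass from this skew contraction to the symmetric one, the plan is to exploit the structural identities in Lemma \ref{xiCVFtwoformulas}, which simplify drastically under the present hypotheses (closedness of $\xi$ giving $\mathcal{P}=0$, and $R=n(n-1)$).

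Substituting into Lemma \ref{xiCVFtwoformulas}(1) and using $C_{ijk}\xi^i=0$ eliminates the Cotton term on the right and leaves
\[df\wedge(d\varphi+\xi^\flat)=0,\]
forcing $d\varphi+\xi^\flat=\lambda\,df$ for a smooth function $\lambda$ on the open set $\{\nabla f\neq 0\}$, where $\varphi$ is the characteristic function of $\xi$. Plugging this back into Lemma \ref{xiCVFtwoformulas}(2) then yields a pointwise identity of the form
\[(f+a)\,C(\cdot,\xi,\cdot)=\xi(f)\,{\rm E}-\lambda|\nabla f|^2 g+n\lambda\,df\otimes df,\]
which is the central structural equation to exploit.

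With this identity in hand, argue by contradiction: assume $C(\cdot,\xi,\cdot)\not\equiv 0$. On the open subset where $\xi$ and $\nabla f$ are linearly dependent, Proposition \ref{ldCTzero} forces $(f+a)\,{\rm C}=0$ and ${\rm T}\equiv 0$, so the Cotton tensor vanishes on the nonempty open set $\{f+a\neq 0\}$. On the complement, where $\xi$ and $\nabla f$ are linearly independent, the plan is to couple the structural identity above with (\ref{fCVSS}) and Lemma \ref{formforTfE}, then integrate over the closed manifold. The hypothesis $\kappa:=b+\tfrac{R}{n(n-1)}a\neq 0$ enters through (\ref{Inu}), which pins down $\int_M(f+a)$ in terms of $\kappa$, allowing the scalar contributions in the integrated identity to cancel and yielding an identity of the form $\kappa\cdot\int_M\|{\rm T}\|^2=0$; since $\kappa\neq 0$, this forces ${\rm T}\equiv 0$ and then, via (\ref{fCVSS}), $(f+a){\rm C}$ and ultimately ${\rm C}$ itself to vanish.

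The principal obstacle, as the introduction flags, is connecting the pointwise structural identity from the linearly independent regime to a \emph{global} obstruction without invoking analyticity. This is precisely where $\kappa\neq 0$ is essential: (\ref{Inu}) degenerates when $\kappa=0$, which is why the companion case $\kappa=0$ requires a separate treatment via analyticity for $n\geq 4$. Once $C(\cdot,\xi,\cdot)\equiv 0$ is established, the closed conformal vector field $\xi$ satisfies the hypotheses of Theorem \ref{Cxizero} and the stated classification in dimensions $3$, $4$, and $n\geq 5$ follows.
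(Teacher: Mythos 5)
Your setup is sound and agrees with the paper up to the midpoint: reducing to Theorem \ref{Cxizero}, noting $C_{ijk}\xi^i=0$ from Lemma \ref{CxiCxi}, specializing Lemma \ref{xiCVFtwoformulas} with $\mathcal{P}=0$ and $R=n(n-1)$ to get $df\wedge(d\varphi+\xi^{\flat})=0$, and the resulting identity $(f+a)C(\cdot,\xi,\cdot)=\xi(f)\mathrm{E}-\lambda|\nabla f|^2g+n\lambda\,df\otimes df$ are all correct and essentially equivalent to the paper's intermediate computations (the paper phrases the first of these as $\rho=\tfrac{R}{n(n-1)}$ on the independence locus, using $\nabla\varphi=-\rho\,\xi$ from (\ref{R}), which then gives $\mathrm{E}(\xi)=0$ via (\ref{ricandf})). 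But the endgame — the only place where $\kappa\neq0$ does real work — is a genuine gap. You assert that coupling the structural identity with (\ref{fCVSS}), Lemma \ref{formforTfE}, and (\ref{Inu}) and "integrating over the closed manifold" yields $\kappa\int_M\|\mathrm{T}\|^2=0$, but no such identity is derived, and the mechanism is implausible: your structural identity only holds where $\lambda$ is defined (the independence locus), so it cannot simply be integrated over $M$; (\ref{Inu}) controls $\int_M(f+a)$ and has no visible route to a weighted $\|\mathrm{T}\|^2$ integral; and Lemma \ref{formforTfE} involves the contraction $E_{ik}T_{ijk}f_j$, whereas your identity contracts the Cotton tensor with $\xi^j$. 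The paper's actual argument is pointwise, not integral: on an open set where $(f+a)\mathrm{C}\neq0$, one has $\varphi\mathrm{E}=C(\cdot,\xi,\cdot)$ (from the equation $\mathcal{L}_g^{\star}\varphi=-C(\cdot,\xi,\cdot)$ for the characteristic function together with $\nabla^2\varphi=-\rho\varphi g$), hence $\left(\xi(f)-(f+a)\varphi\right)\mathrm{E}=0$; differentiating $\xi(f)=(f+a)\varphi$ along $\xi$ and using (\ref{MainEab}) and $\mathrm{E}(\xi)=0$ yields $\kappa\|\xi\|^2=0$, contradicting $\xi\neq0$ there. You would need to supply this (or an actual substitute) for the proof to close.

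Two further omissions: first, passing from $(f+a)\,C(\cdot,\xi,\cdot)\equiv0$ to $C(\cdot,\xi,\cdot)\equiv0$ requires knowing that $\{f+a\neq0\}$ is dense; the paper proves this by showing every critical point of $f$ on $f^{-1}(-a)$ is non-degenerate, since there $\nabla^2f=\kappa g$ — a second, independent use of $\kappa\neq0$ that your proposal never makes. Second, your appeal to Proposition \ref{ldCTzero} "on the open subset where $\xi$ and $\nabla f$ are linearly dependent" is imprecise (that proposition is stated for dependence on all of $M^n$, and the dependence locus is closed, not open); the paper instead uses it contrapositively, to conclude that $\xi$ and $\nabla f$ must be linearly \emph{independent} wherever $(f+a)\mathrm{C}\neq0$. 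Finally, the paper treats the case $f^{-1}(-a)=\emptyset$ separately via the divergence identity $\operatorname{div}(\mathrm{E}(\nabla f))=(f+a)\|\mathrm{E}\|^2$ and Obata's theorem; this case is harmless for your route but should at least be acknowledged.
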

    \begin{proof}   
        Let $\varphi$ be characteristic function of a non-Killing closed conformal vector field $\xi$ on $(M^n, g)$. Let $\kappa:=b+\frac{{\rm R}}{n(n-1)}a\neq0.$ Thus $\kappa$ can be either positive or negative. Note that if the triple $(f, a, b)$ satisfies (\ref{MainEab}), then $(-f,-a,-b)$ also fulfills (\ref{MainEab}). Therefore, without loss of generality, we can assume that $\kappa>0.$ With the aid of Lemma \ref{scalarPosi}, we know that the scalar curvature ${\rm R}>0.$ Combining this with (\ref{Inu}) and the positivity of $\kappa,$ we can deduce that the set \(\{f+a>0\}\) is nonempty. The proof is divided into the following three steps.\\
    {\bf Step 1.}  In the first step, if the level set $f^{-1}(-a)$ is empty, we prove that $(M^n, g)$ is isometric to a round sphere with radius $\sqrt{\frac{n(n-1)}{\operatorname R}}.$\\
        Assume that $f^{-1}(-a)$ is empty. Thus the quantity $f+a$ does not change sign. Without loss of generality, we suppose that $f+a$ is positive on $M.$ By integrating both sides of the divergence formula
        $$\text{div}({\rm E}(\nabla f))=(f+a)\|{\rm E}\|^2$$
        over $M,$ one has
        \[\int_M(f+a)\|{\rm E}\|^2=0.\]
        Therefore, $\|{\rm E}\|^2=0$ on $M.$ Hence, the equation (\ref{MainEab}) becomes
        \[\nabla^2f+\frac{{\rm{R}}}{n(n-1)}fg=bg.\]
        Equivalently,
        \[\nabla^2\tilde{f}+\frac{{\rm{R}}}{n(n-1)}\tilde{f}g=0,\]
        where $\tilde{f}:=f-\frac{n(n-1)}{{\rm{R}}}b.$ The desired result is obtained by combining this with Obata's theorem \cite[Theorem A]{Obata1962}.\\
    {\bf Step 2.} In the second step, if $f^{-1}(-a)$ is nonempty, we show that the level set $f^{-1}(-a)$ has measure zero.\\
        To see this, we rewrite (\ref{MainEab}) in terms of $f+a$ as follows
           \[\nabla^2f=(f+a)\left({\rm E}-\frac{{\rm R}}{n(n-1)}g\right)+\kappa g.\]
        Together with the positivity of $\kappa,$ we conclude that each critical point of $f$ on $f^{-1}(-a)$ is non-degenerate. Therefore, the level set $f^{-1}(-a)$ has measure zero.\\
    {\bf Step 3.} In third step, we verify that the Cotton tensor vanishes identically. \\
        To see this, thanks to the fact that the level set $f^{-1}(-a)$ has measure zero, it suffices to show that the quantity $(f+a){\rm C}$ is equal to zero on $M.$ We will argue by contradiction to show that $(f+a){\rm C}\equiv0$. Suppose that $(f+a){\rm C}$ does not vanish at each point on a connected open subset $\Omega$ of $M.$ Thus, by (\ref{fCVSS}) and definition of $\operatorname{T},$ it is clear that $|\nabla f|^2>0$ at each point in $\Omega.$ By means of Proposition \ref{ldCTzero}, it is easy to see that $\xi$ and $\nabla f$ are linearly independent on $\Omega\backslash \mathcal{Z}$ where $\mathcal{Z}=\{\xi=0\}.$

       We next utilize Proposition \ref{ldCTzero} and Lemma \ref{xiCVFtwoformulas} to derive a contradiction. Firstly, by virtue of the item $(1)$ in Lemma \ref{xiCVFtwoformulas}, the closedness of $\xi,$ the item $(i)$ in Lemma \ref{CxiCxi}, and (\ref{newclosed}), we immediately get
       \[(\rho-\frac{\operatorname R}{n(n-1)})(f_j\xi_k^{\flat}-f_k\xi_j^{\flat})=0,\text{~in~} M\backslash \mathcal{Z}.\]
       The fact that $\xi$ and $\nabla f$ are linearly independent on $\Omega\backslash \mathcal{Z}$ yields
       \begin{equation}\label{rhov}
         \rho-\frac{\operatorname R}{n(n-1)}=0,\text{~in~} \Omega\backslash \mathcal{Z}.
       \end{equation}
       As a result, making use of (\ref{ricandf}), (\ref{rhov}), and (\ref{newclosed}), we have
       \begin{equation}\label{Exizero}
         E_{ik}\xi^k=0,\text{~in~} \Omega\backslash \mathcal{Z}.
       \end{equation}
       Secondly, in view of the item $(2)$ in Lemma \ref{xiCVFtwoformulas}, the closedness of $\xi,$ (\ref{rhov}), and (\ref{newclosed}), it can be inferred that
       \[\xi(f)E_{ik}=(f+a)C_{ijk}\xi^j,\text{~in~} \Omega\backslash \mathcal{Z}.\]
       We now deal with the term $C_{ijk}\xi^j.$ Based on (\ref{fErho}), (\ref{rhov}), and the constancy of scalar curvature, it follows that
       \begin{equation*}\label{varphiECxi}\varphi E_{ik}=C_{ijk}\xi^j,\text{~in~} \Omega\backslash \mathcal{Z}.\end{equation*}
       Combine previous two formulas to imply
       \begin{equation*}\label{xiffvarphi}
       \left(\xi(f)-(f+a)\varphi\right)E_{ik}=0,\text{~in~} \Omega\backslash \mathcal{Z}.
       \end{equation*}
       Observe that $\|{\rm E}\|^2$ can not equal zero at each point in $\Omega\backslash \mathcal{Z},$ otherwise ${\rm C}$ would be zero at that point. Thus there exists an open subset $\widetilde{\Omega}\subset \Omega\backslash \mathcal{Z}$ such that
       \begin{equation*}\label{xiffvarphi}
         \xi(f)=(f+a)\varphi,\text{~in~} \widetilde{\Omega}.
       \end{equation*}
       By taking covariant derivative on both sides of preceding formula in the direction of $\xi,$ together with (\ref{nablaXi}), (\ref{MainEab}), and (\ref{Exizero}), a direct computation reveals that
       \[\Big(b-\frac{\operatorname R}{n(n-1)}f\Big)\|\xi\|^2=(f+a)\xi(\varphi),\text{~in~} \widetilde{\Omega}.\]
       Combining this with (\ref{newclosed}) and (\ref{rhov}), one has
       \[\Big(b+\frac{\operatorname R}{n(n-1)}a\Big)\|\xi\|^2=0,\text{~in~} \widetilde{\Omega}.\]
       Taking advantage of the assumption (\ref{ba=0}), we find that $\xi$ is zero at each point in $\Omega\backslash \mathcal{Z}.$ This contradicts the fact that $\xi$ is not zero in $\Omega\backslash \mathcal{Z}.$ Eventually, the quantity $(f+a){\rm C}$ is equal to zero on $M.$ Thanks to $(2)$ in Remark \ref{closedmathcalLf}, the triple $(M^n, g, \varphi)$ is a vacuum static space. Therefore, the desired rigidity result is obtained by Theorem \ref{Cxizero}. This ends the proof of Theorem \ref{CSS}.
    \end{proof}
    \begin{Rmk}
        It is easy to see that $f+a$ satisfies {\rm(\ref{MainEVSS})} when $b+\frac{\operatorname R}{n(n-1)}a=0$ and the above process fails to yield the contradiction. We will employ an alternative approach to derive the contradiction in the subsequent section.
    \end{Rmk}

\section{Vacuum static spaces with closed conformal vector fields}

       On the basis of Theorem \ref{firstthm}, Proposition \ref{ldCTzero}, and Lemma \ref{xiCVFtwoformulas}, we have
    \begin{Thm}\label{VSSCVF}
       Let $(M^n, g, f)$ be an $(n\ge3)-$dimensional closed vacuum static space with scalar curvature $n(n-1).$ Then $(M^n, g)$ admits a non-Killing closed conformal vector field if and only if it is isometric to one of the spaces listed in Theorem {\rm\ref{Cxizero}}.
    \end{Thm}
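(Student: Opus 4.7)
\textbf{Proof plan for Theorem \ref{VSSCVF}.} The statement decomposes into a routine sufficiency and a substantive necessity. For sufficiency, I would verify directly that every space listed in Theorem \ref{Cxizero} is a vacuum static space of scalar curvature $n(n-1)$ admitting a non-Killing closed conformal vector field: on $\mathbb{S}^n(1)$ any height function is a VSS potential and its gradient supplies a non-Killing closed (gradient) conformal vector field, while on each warped product $\mathbb{S}^1(\sqrt{1/n})\times_h F$ with $F$ Einstein, Lemma \ref{CCVFhpartial} exhibits $h\,\partial/\partial t$ as a closed conformal vector field (non-Killing since $h$ is non-constant), the VSS property coming from the Kobayashi--Lafontaine construction via (\ref{warpedvss}).

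For necessity, fix a non-Killing closed conformal vector field $\xi$ on the closed VSS $(M^n,g,f)$ and denote its characteristic function by $\varphi$; since $M$ is closed and $\xi$ is non-Killing, $\varphi$ is non-constant (any homothetic vector field on a closed manifold is Killing). Constancy of $R = n(n-1)$ together with closedness of $\xi$ and item (2) of Remark \ref{closedmathcalLf} specialize Theorem \ref{firstthm} to
\[(\mathcal{L}_g^{\star}\varphi)_{ik} = -C_{kli}\xi^l.\]
The entire proof therefore reduces to showing $C(\cdot,\xi,\cdot)\equiv 0$ on $M$: once this holds, $\varphi$ itself solves the vacuum static equation and Theorem \ref{Cxizero} (whose curvature hypothesis is now automatic) yields the classification.

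To force the vanishing of $C(\cdot,\xi,\cdot)$ I would split on whether $\xi$ and $\nabla f$ are linearly dependent on $M$. If they are, Proposition \ref{ldCTzero} with $a = b = 0$ gives $fC\equiv 0$ on $M$; since $f^{-1}(0)$ is a totally geodesic regular hypersurface, $\{f\neq 0\}$ is open and dense, so smoothness yields $C\equiv 0$ globally. Otherwise, let $U\subset M\setminus\mathcal{Z}$ be a connected open subset on which $\xi$ and $\nabla f$ are linearly independent. On $U$, combining $\mathcal{P}\equiv 0$ (closedness), $a = b = 0$, and the identity $C_{ijk}\xi^i = 0$ from Lemma \ref{CxiCxi}(i), Lemma \ref{xiCVFtwoformulas}(1) collapses to
\[f_j\Bigl(n\varphi_k + \tfrac{R}{n-1}\xi_k^{\flat}\Bigr) = f_k\Bigl(n\varphi_j + \tfrac{R}{n-1}\xi_j^{\flat}\Bigr),\]
so $\nabla\varphi + \tfrac{R}{n(n-1)}\xi$ is parallel to $\nabla f$ on $U$, equivalently (via (\ref{ricandf})) $E(\xi)$ is parallel to $\nabla f$. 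Feeding this proportionality back into Lemma \ref{xiCVFtwoformulas}(2) and exploiting the VSS equation for $f$ should pin down $C(\cdot,\xi,\cdot)$ on $U$. For $n\ge 4$ the vanishing then extends from $U$ to $M\setminus f^{-1}(0)$ by the analyticity of the VSS pair $(g,f)$ (\cite[Proposition 2.8]{Cor2000CMP}) and thence to $M$ by continuity; for $n = 3$, the triviality $W\equiv 0$ reduces (\ref{fCVSS}) to $fC = T$, and Lemma \ref{formforTfE} closes the argument directly.

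The main obstacle is this linearly independent case. In Theorem \ref{CSS} the analogous step closes thanks to the nonvanishing of $\kappa = b + \frac{R}{n(n-1)}a$, but in the vacuum static setting $a = b = 0$ and this lever vanishes. The substitute is either analyticity of VSS metrics (propagating local vanishing of $C(\cdot,\xi,\cdot)$ globally for $n\ge 4$) or the low-dimensional simplification $W\equiv 0$ for $n = 3$, and the delicate point is ensuring that the reductions provided by Lemma \ref{xiCVFtwoformulas} under the above proportionality carry enough algebraic information to trigger these mechanisms without appealing to $\kappa\neq 0$.
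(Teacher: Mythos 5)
Your reduction and case split track the paper's strategy, and the linearly dependent case (Proposition \ref{ldCTzero} with $a=b=0$ plus density of $\{f\neq0\}$) is fine. The gap sits exactly where you flag it: on the open set $U$ where $\xi$ and $\nabla f$ are linearly independent, ``feeding the proportionality back into Lemma \ref{xiCVFtwoformulas}(2)'' does not pin down $C(\cdot,\xi,\cdot)$. First, the conclusion you should extract from Lemma \ref{xiCVFtwoformulas}(1) is stronger than ``parallel to $\nabla f$'': since (\ref{R}) with $X=\xi$ forces $\nabla\varphi$ to be parallel to $\xi$, the vector $\nabla\varphi+\frac{\operatorname R}{n(n-1)}\xi$ is parallel to both $\xi$ and $\nabla f$, hence vanishes on $U$, i.e.\ $E(\xi)=0$ there. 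But even granting that, Lemma \ref{xiCVFtwoformulas}(2) with $\mathcal{P}=0$ and $a=b=0$ collapses to $\xi(f)E_{ik}=fC_{ijk}\xi^j$ on $U$ --- an identity relating $C(\cdot,\xi,\cdot)$ to ${\rm E}$, not a vanishing statement. In Theorem \ref{CSS} this is combined with $\varphi E_{ik}=C_{ijk}\xi^j$ and differentiated along $\xi$ to produce $\kappa\|\xi\|^2=0$; with $\kappa=b+\frac{\operatorname R}{n(n-1)}a=0$, which is exactly the vacuum static case, that computation is vacuous --- this is precisely what the Remark after Theorem \ref{CSS} warns about. So your proposed mechanism does not close. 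Moreover, the analyticity step you invoke would need $C(\cdot,\xi,\cdot)$ --- hence $\xi$ itself --- to be analytic, which \cite[Proposition 2.8]{Cor2000CMP} does not provide.

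The paper's actual resolution is structurally different: it targets the tensor ${\rm T}={\rm T}^f$, built from $g$ and $f$ only, rather than $C(\cdot,\xi,\cdot)$. Proposition \ref{ldCTzero} kills ${\rm T}$ on the linearly dependent locus, while $\nabla\varphi+\frac{\operatorname R}{n(n-1)}\xi=0$ holds on the independent locus, so $\|{\rm T}\|^2\|\nabla\varphi+\frac{\operatorname R}{n(n-1)}\xi\|^2=0$ on all of $M$. Then either $\nabla\varphi+\frac{\operatorname R}{n(n-1)}\xi\equiv0$ and Obata's theorem already gives the round sphere, or ${\rm T}$ vanishes on an open set and analyticity of the vacuum static pair $(g,f)$ extends ${\rm T}\equiv0$ to $M$; the classification then follows by re-running the endgame of the proof of Theorem \ref{Cxizero} (which only needs ${\rm T}\equiv 0$), not by invoking that theorem as a black box. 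For $n=3$ the paper runs a concrete chain $\varphi fE_{ik}\xi^i=T_{ijk}\xi^i\xi^j\Rightarrow E_{kl}f_l=0\Rightarrow f\|{\rm E}\|^2=0$ by differentiating and using $E_{kl,k}=\frac{n-2}{2n}\nabla_l{\rm R}$; your ``Lemma \ref{formforTfE} closes the argument directly'' skips these steps, which are where the contradiction actually comes from.
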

    \begin{proof}
       Let $\varphi$ be characteristic function of a non-Killing closed conformal vector field $\xi$ on $(M, g)$. Similar to the proof of Theorem \ref{Cxizero}, it is sufficient to demonstrate that ${\rm T}\equiv 0$ on $M.$ \\
       For $n=3,$ by similar approach to the proof of Theorem \ref{CSS}, one can derive the vanishing of ${\rm T}.$ To see this, using the notation from the proof of Theorem \ref{CSS}, we multiply both sides of (\ref{varphiECxi}) by $f$ and $\xi^i,$ and summing up over $i$ from $1$ to $n,$ thanks to (\ref{Exizero}), one has
       \[0=\varphi fE_{ik}\xi^i=fC_{ijk}\xi^i\xi^j=T_{ijk}\xi^i\xi^j,\text{~in~} \Omega\backslash \mathcal{Z},\]
       where the second equality follows from (\ref{fCVSS}) and the fact that the Weyl tensor equals zero when the dimension $n=3.$ In conjunction with the definition of ${\rm T}$ and (\ref{Exizero}), one can compute that
       \[\|\xi\|^2E_{kl}f_l=0,\text{~in~} \Omega\backslash \mathcal{Z},\]
       Consequently,
       \[E_{kl}f_l=0,\text{~in~} \Omega\backslash \mathcal{Z}.\]
       By taking covariant derivative on both sides of previous formula in the direction of $\frac{\partial}{\partial x^k},$ according to the relation $E_{kl,k}=\frac{n-2}{2n}\nabla_l{\rm R},$ the constancy of scalar curvature, and (\ref{MainEVSS}), we have
       \[f\|{\rm E}\|^2=0,\text{~in~} \Omega\backslash \mathcal{Z}.\]
       Keeping the fact that the set $\{f\neq0\}$ is dense in $M$ and the definition of ${\rm T}$ in mind, we conclude that ${\rm T}\equiv0$ in $\Omega\backslash \mathcal{Z}.$ This leads to a contradiction. Therefore, the $(0,3)-$tensor ${\rm T}$ must be zero on the entire manifold $M^n.$\\
       For $n\geq4,$ we will exploit another method to show that ${\rm T}\equiv 0$ on $M.$ To do that, from the proof of Theorem {\rm\ref{CSS}}, one obtains
       \[\|{\rm T}\|^2\|\nabla\varphi+\frac{\operatorname R}{n(n-1)}\xi\|^2=0,\text{~in~} M.\]
       If $\nabla\varphi+\frac{\operatorname R}{n(n-1)}\xi\equiv0$ on $M,$ a direct calculation indicates that
       \[\nabla^2\varphi+\frac{\operatorname R}{n(n-1)}\varphi g=0,~\text{on}~ M.\]
       In view of Obata's theorem \cite[Theorem A]{Obata1962}, we deduce that $(M^n, g)$ is isometric to a round sphere with radius $\sqrt{\frac{n(n-1)}{\operatorname R}}.$ \\
       If $\nabla\varphi+\frac{\operatorname R}{n(n-1)}\xi\neq0$ on an open subset $V\subset M$, it is easy to see that ${\rm T}\equiv0$ on $V.$ By analyticity, we know that ${\rm T}\equiv0$ on $M.$ The desired result is obtained as in the proof of Theorem \ref{Cxizero}. This completes the proof of Theorem \ref{VSSCVF}.
    \end{proof}

   \begin{Rmk}
       It is worth noting that ${\rm T}$ may not equal zero on a completed connected vacuum static space with negative scalar curvature that carrying a non-Killing closed conformal vector field. For instance, let $(M^n, g)$ be the warped product in Example ${\rm 1}$, as mentioned at the end of section ${\rm 3}$, it is a vacuum static space with scalar curvature $-n(n-1)$ that carrying a non-Killing closed conformal vector field $h\frac{\partial}{\partial t}.$ However, its Ricci curvature tensor has three distinct eigenvalues. Therefore, the associated $(0,3)-$tensor ${\rm T}$ is also non identically zero.
   \end{Rmk}

       We end this article with three interesting questions.
   \begin{Que}
       Does Theorem {\rm\ref{VSSCVF}} still hold when the closed conformal vector field is replaced by a general conformal vector field?
   \end{Que}
   \begin{Que}
       Is it possible to classify complete connected vacuum static space with a non-Killing conformal vector field?
   \end{Que}

 \section{Acknowledgement}
        We would like to express our gratitude to the referee for careful reading of this manuscript.

\end{document}